\theoremstyle{plain}
\newtheorem{thm}{Theorem}[section]
\newtheorem{prop}[thm]{Proposition}
\newtheorem{assum}[thm]{Assumption}
\newtheorem{rem}[thm]{Remark}
\newtheorem{cor}[thm]{Corollary}
\newtheorem{lemma}[thm]{Lemma}
\theoremstyle{definition}
\newtheorem{defin}[thm]{Definition}
\newtheorem{exa}[thm]{Example}
\numberwithin{equation}{section}
\numberwithin{figure}{section}
\newcommand{\abs}[1]{\lvert{#1}\rvert} 
\newcommand{\normsymb}{\|}
\newcommand{\norm}[2]{\normsymb{#1}\normsymb_{#2}} 
\newcommand{\RR}{\mathbb{R}} 
\newcommand{\CC}{\mathbb{C}} 
\newcommand{\NN}{\mathbb{N}} 
\newcommand{\e}{\mathrm{e}}
\newcommand{\cD}{{\mathcal D}}
\newcommand{\cE}{{\mathcal E}}
\newcommand{\cH}{{\mathcal H}}
 \def\Graph{\mathcal{G}}
 \def\mV{\mathsf{V}}
 \def\mVD{\mathsf{V}_{\mathsf{D}}}
 \def\mE{\mathsf{E}}
 \def\mv{\mathsf{v}}
 \def\me{\mathsf{e}}
\newcommand{\ee}{\mathrm{e}}
\newcommand{\fa}{{\mathfrak a}}
\newcommand{\fE}{{\mathcal E}}
\DeclareMathOperator{\Tr}{Tr} 
\DeclareMathOperator{\dd}{d\!} 
\newcommand{\PP}{\mathsf{P}} 
\newcommand{\ii}{\mathrm i}  
\DeclareMathOperator{\Dom}{{\mathcal D}} 
\DeclareMathOperator{\Ran}{\mathrm{Ran}} 
\DeclareMathOperator{\Span}{span}
\newcommand{\internal}{\mathrm{int}}
\newcommand{\external}{\mathrm{ext}}
\newcommand{\loc}{\mathrm{loc}}
\newcommand{\bdSp}{\cH}
\newcommand{\bdEv}{\Psi}
\newcommand{\preboundary}{{\mathcal{B}}}
\newcommand{\gauge}{U}
\newcommand{\gaugeBoundary}{V}
\newcommand{\boundarySign}{S}
\newcommand{\llower}{\ell^{\downarrow}}
\newcommand{\lupper}{\ell^{\uparrow}}
\newcommand{\eps}{\varepsilon}
\begin{document}

\title[Global bounds by small control sets]{Sturm-Liouville Problems And Global Bounds By Small Control Sets And applications to quantum graphs
}
\author[M.~Egidi]{Michela Egidi}
\address[M.~Egidi]{Universit\"at Rostock, Institut f\"ur Mathematik, D-18051 Rostock, Germany}
\email{michela.egidi@uni-rostock.de}

\author[D. Mugnolo]{Delio Mugnolo}
\address[D.~Mugnolo]{Lehrgebiet Analysis, Fakult\"at Mathematik und Informatik, Fern\-Universit\"at in Hagen, D-58084 Hagen,
Germany}
\email{delio.mugnolo@fernuni-hagen.de}

\author[A.~Seelmann]{Albrecht Seelmann}
\address[A.~Seelmann]{Technische Universit\"at Dortmund, Fakult\"at f\"ur Mathematik, D-44221 Dortmund, Germany}
\email{albrecht.seelmann@math.tu-dortmund.de}

\subjclass[2010]{34B45 (05C50 35P15 81Q35)}

\keywords{Spectral geometry; Sturm--Liouville problems; Magnetic Schrödinger operators; Unique Continuation Property; Eigenfunctions of quantum graphs; Control
theory}

\thanks{
The work of A.S.\ has been partially supported by the DFG grant VE~253/10-1 entitled \emph{Quantitative unique continuation
properties of elliptic PDEs with variable 2nd order coefficients and applications in control theory, Anderson
localization, and photonics}. The work of D.M.\ was partially supported by the Deutsche Forschungsgemeinschaft (Grant 397230547).
}

\begin{abstract}
	We develop a Logvinenko--Sereda theory for one-dimensional vector-valued self-adjoint operators. We thus deliver upper bounds on
	$L^2$-norms of eigenfunctions  -- and linear combinations thereof --  in terms of their $L^2$- and $W^{1,2}$-norms on small
	control sets that are merely measurable and suitably distributed along each interval. An essential step consists in proving a
	Bernstein-type estimate for Laplacians with rather general vertex conditions. Our results carry over to a large class of
	Schrödinger operators with magnetic potentials; corresponding results are unknown in higher dimension. We illustrate our
	findings by discussing the implications in the theory of quantum graphs.
\end{abstract}

\maketitle
\tableofcontents

\section{Introduction}

A typical assignment in control theory of quantum graphs is to bound the $L^2$-norm of a smooth function $f$ supported on a
metric graph $\Graph$ in terms of the norm of its restriction to a (typically, disconnected) \emph{control set} $\omega$. We here
are interested in finding sufficient conditions on $f$ and $\omega$ implying the inequality
\begin{equation}\label{eq:estim-base}
	\norm{f}{L^2(\Graph)}^2
	\leq
	C_\omega \norm{\chi_\omega f}{L^2(\Graph)}^2
	,
\end{equation}
where $\chi_{\omega}\in L^\infty(\Graph)$ is the characteristic function of $\omega$, and where the constant $C_\omega$ shall be
uniform in $f$ belonging to appropriate classes of functions. More specifically, we focus on the case where $f$ are
eigenfunctions -- or, more generally, linear combinations thereof -- of self-adjoint realisations of free -- or even magnetic --
Laplacians on metric graphs of semi-bounded geometry, i.e., such that the edges' length do not accumulate at $0$; we refer to the
monographs \cite{Pos12,BerKuc13,Mug14,Kur23}. Under this geometric assumption, the Laplacian is well known to be essentially
self-adjoint and the corresponding quantum graph can be equivalently regarded as a vector-valued Sturm--Liouville operator. This
motivates us to develop a control theory of one-dimensional second order operators on finite or countably infinite collections of
intervals that goes far beyond the metric graph setting.

Many local bounds on smooth functions on Euclidean domains are classical, including the  Harnack Inequality and  Hadamard's Three
Balls Theorem, see \cite{BerMal21} and references therein for a discussion of the interplay with spectral geometry. An
interesting way of proving \emph{pointwise} estimates for eigenfunctions of Schrödinger operators is based on the properties of
the \emph{torsion function} $u:=(-\Delta_D)^{-1}{\mathbf 1}$ of an open bounded domain $\Omega\subset \RR^d$. This was shown
in~\cite{FilMay12,Ber12} to be a convenient \textit{landscape function}, i.e., to allow for a pointwise bound
\begin{equation}\label{eq:filmay}
	\abs{\varphi(x)}
	\leq
	\abs{\lambda} \norm{\varphi}{\infty} u(x)
	,\quad
	x\in \Omega
	,
\end{equation}
for all eigenpairs $(\lambda,\varphi)$ of the Laplacian $\Delta_D$ with Dirichlet conditions on $\Omega$. Further landscape
functions that lead to sharper inequalities have been discovered ever since, cf.~\cite{Ste17,ArnDavFil19,Mug23}. These results
have inspired many investigations about localization properties of eigenfunctions of different classes of elliptic operators on
various geometric structures, including quantum graphs \cite{HarMal18,HarMal20,MugPlu23} and even general $M$-matrices
\cite{FilMayTao21}. Different but related (de)localization properties for eigenfunctions of combinatorial graphs have been
recently studied in~\cite{AnaSab19,LemSab20}.

In this work we focus on so-called \emph{Logvinenko--Sereda-type theorems}, which are currently available for Euclidean domains.
We extend them here to collections of one-dimensional intervals under general (possibly non-separated) self-adjoint boundary
conditions and, eventually, to possibly infinite metric graphs.
In its original form, the Logvinenko--Sereda Theorem goes back to \cite{Kacnelson-73, LogvinenkoS-74}, and gives a necessary and
sufficient geometric condition on the control set $\omega$ for an estimate of type \eqref{eq:estim-base} --with $\Graph$ replaced
by $\RR^{d}$ --  to be valid for functions with compactly supported Fourier transform. Roughly speaking, this condition requires
$\omega$ to be a measurable and well-distributed set in $\RR^d$. This result has been subsequently refined by Kovrijkine in
\cite{Kov01,Kovrijkine-thesis}, leading to an improved and qualitatively sharp constant in \eqref{eq:estim-base}.

Later on the very same technique of Kovrijkine has been adapted to prove an analogous estimate for functions with compactly
supported Fourier--Bessel transform, see~\cite{GhobberJ-13}. In more recent years his technique has found a broader application:
the functions he considers are elements in the range of the spectral projection of the Laplace operator on $\RR^d$ up to a
certain energy value, and it turns out that an estimate of type \eqref{eq:estim-base} is crucial in the theory of controllability
for the heat equation, see \cite{EgidiV-20} and the references therein. More generally, estimates of such a form for functions in
spectral subspaces of operators are of paramount importance in control theory and, consequently, a number of works have started
exploiting and adapting Kovrijkine's original ideas:
\cite{Egidi-21} deals with spectral subspaces of the Laplacian on infinite strips, 
\cite{BeauchardJPS-21} considers Hermite functions (i.e., functions in the spectral subspaces of the harmonic oscillator on
$\RR^d$),
\cite{MartinPS} obtains a spectral inequality for Hermite functions allowing $\omega$ to have holes of sublinearly growing
diameter, which is extended in \cite{DickeSVa,DickeSV-23} to also allow for a control set with decaying density,
\cite{Martin,AS} extend this further to general (anisotropic) Shubin operators, \cite{Martin22} treats functions in
Gelfand--Shilov spaces.
Finally, \cite{EgiSee21} proposes an abstract operator theoretical framework to derive such an inequality.
Extending the Logvinenko--Sereda Theorem to metric graphs paves the road to similar developments in spacial environments with
singularities, and may complement very recent results on controllability of parabolic equations on network-like structures:
\cite{Iwasaki-21} studying the observability of the heat equation with the standard Laplacian on equilateral graphs with finitely
many edges, all of finite length, and observability set being a large enough discrete set of points, \cite{MML21} treating the
observability of time-fractional diffusion equations on star graphs, and \cite{BarCavCoc21,AprBar22} considering parabolic
systems with elliptic second order operators on a metric tree and control sets being sets of leaves or open subgraphs, which may
or may not intersect all of the edges. 
The novelty of the techniques presented in this article lays in the fact that, while treating several realisation of the
(magnetic) Laplacian, we consider control sets that are merely measurable, although well-distributed in the considered space.

As we elaborate on the Logvinenko--Sereda approach, the skeleton of our work is based on \cite{EgiSee21}. However, the passage
from Euclidean domains to metric graphs that possibly feature edges of infinite length is not trivial: \cite{EgiSee21} relies
heavily on complex analytical tools and, in particular, on a unique continuation principle for complex-valued analytic functions
of several variables, which are not always available on metric graphs since the unique continuation principle generally fails,
see Remark~\ref{rmk:unique-continuation} below, as well as \cite[Section~3.4]{BerKuc13} and \cite{PluTau21,Kur21} for an overview
of topological and metric conditions implying that eigenfunctions are supported everywhere; indeed, circumventing the failure of
the unique continuation principle is a common issue in spectral geometric investigations of quantum graphs. Moreover, one needs
to find a good extension of the original geometric condition for the subset $\omega\subset \Graph$ to deal simultaneously with
edges of finite and infinite length and such that the set is large enough with respect to the whole graph. While in the Euclidean
setting the set $\omega$ is chosen such that the measure of each intersection with a ball of a certain radius contained in the
domain has a suitable lower bound, in our setting we require, inspired by \cite[Proposition~3.1]{EgiSee21}, the existence of a
covering for every edge in $\Graph$ by intervals of a variable but bounded length and overlapping only at the boundary such that
the measure of the intersection of $\omega$ with each of these intervals has a suitable lower bound; see
Definition~\ref{def:gammarhosampling} below. Here, the use of a specific covering with intervals of variable length allows to
obtain estimates with favourable parameters and to treat sets with larger gaps in the interior of each edge. For further details
we refer the reader to Remark~\ref{rem:sampling}\,(3) and Example~\ref{exa:basic}\,(3) below.

When applying our methods, it will be crucial to assume a function $f$ -- or a (spectral) class thereof -- to satisfy a Bernstein
inequality of the form
	\begin{equation}\label{eq:bernstein-intro}
		\norm{ f^{(m)} }{L^2(\Graph)}^2
		\leq
		C_B(m) \norm{f}{L^2(\Graph)}^2
		\quad \text{ for all }
		m\in\NN_0
	\end{equation}
with a sequence $(C_B(m))_{m\in \NN_0}$ fulfilling a suitable summability assumption. Condition~\eqref{eq:bernstein-intro}
is generally not expected to be satisfied for Schrödinger operators with nontrivial (non-analytic) potential $V$, but
could be treated on the whole Euclidean space $\RR^d$ for the harmonic oscillator (with quadratic potential) in
\cite{BeauchardJPS-21,EgiSee21} and for more general Shubin operators, including anharmonic oscillators with potential
$\abs{x}^{2k}$, in \cite{Martin,AS}.
Our results here are complementary to those in \cite[Section~4]{HarMal20}, which deliver pointwise estimates
\[
	\abs{\varphi(x)}
	\leq
	\kappa(x) \norm{\varphi\chi_\omega}{L^2(\Graph)}
\]
for suitable subgraphs  $\omega\subset \{x\in \Graph:V(x)\ge \lambda\}$ -- the complement of a ``potential well'' --, where
$\kappa$ is a function that depends on the internal structure of the metric graph.

Another prominent feature of our methods is that a handy factorization of a large class of self-adjoint realisations of the
magnetic Laplacian on metric graphs is available (see \cite[Section 2]{HundertSimon-03} for a similar factorization on domains,
and \cite[Section~1.4.1]{BerKuc13} or \cite[Section~2.2.2]{Pos12} for the case of the free Laplacian on metric graphs). We will
make good use of it to provide bounds on eigenfunctions for all such realisations. To the best of our knowledge, a direct
counterpart of our estimates for the Laplacian on domains is available in the literature only for Dirichlet, Neumann, and -- if
applicable -- periodic boundary conditions.

As already mentioned, we develop our theory for fairly general Sturm--Liouville-type operators with (separated or non-separated)
self-adjoint boundary conditions. Yet, it is especially charming to apply our theory to metric graphs, which we regard as
collections of intervals glued at their endpoints in an appropriate way that is encoded by suitable, non-separated boundary
conditions. For the \emph{standard Laplacian} (that is, the Laplacian realisation with continuity and Kirchhoff-type conditions
in the vertices; the most common since the pioneering investigations in~\cite{PavFad83,Nic87}), combining our general bounds with
known eigenvalue estimates of spectral geometric flavour we obtain results of the following kind:

\begin{quote}
\emph{Given a compact metric graph $\Graph$, then for finite linear combinations $f$ of eigenfunctions for the standard Laplacian
$-\Delta$ associated with the $m$ lowest eigenvalues we can prove that
\[
	\norm{\chi_\omega f}{L^2(\Graph)}^2
	\geq
	C\norm{f}{L^2(\Graph)}^2
\]
and
\[
	\norm{\chi_\omega f'}{L^2(\Graph)}^2
	\geq
	C\norm{f'}{L^2(\Graph)}^2
	,
\]
where $\omega$ can be a subgraph of $\Graph$, or even a general measurable subset of the metric measure space $\Graph$, as long
as it is sufficiently well-distributed among the edges. This yields, in particular, a corresponding estimate for the
$W^{1,2}$-norm,
\begin{equation}\label{eq:fh1fl2}
	\norm{ f }{W^{1,2}(\omega)}
	\geq
	C\norm{ f }{W^{1,2}(\Graph)},
\end{equation}
that seems to have no known counterpart in the case of domains.}
\end{quote}
We stress that the constant $C$ in~\eqref{eq:fh1fl2} is explicit and only depends on $m$, the subgraph $\omega$, and rough
information about the topological and metric structure of the metric graph -- more precisely, its  Betti number and diameter --,
see Corollary~\ref{cor:appl-metr-gr} below.

In particular, our bounds \eqref{eq:fh1fl2} are uniform with respect to the control set $\omega$ and therefore perform better in
the ``semi-supervised  case'', if an educated guess allows the user to place $\omega$ in a region where the function to be
estimated is presumably small. Note that in the last years, much heuristic insight above eigenfunction profiles on metric graphs
has been gained, see, e.g., \cite{BerKenKur19,BorCorJon21,KenRoh21,Mug23}.

The plan of this article is as follows: The general setting we are working with, including the description of suitable
realisations of the one-dimensional magnetic Laplacian and more general vector-valued Sturm--Liouville operators, is briefly
recalled in Section~\ref{ssec:framework}. In Section~\ref{ssec:mainresults} we introduce the main geometric notion of the
article -- that of sampling subsets of a metric graph, Definition~\ref{def:gammarhosampling} -- and then formulate
Theorem~\ref{thm:Laplacian} about global estimates for eigenfunctions of a general class of such realisations. In turn, this is a
more or less direct consequence of Theorem~\ref{thm:main}, which holds for all functions satisfying a Bernstein inequality (see
Definition~\ref{def:Bernstein}). Before turning to the proof of our main results, we substantiate in Section~\ref{sec:optimality}
the claim that our estimates are qualitatively optimal, with respect to the relevant parameters
(Example~\ref{exa:qualitatively-optimality}), and then discuss in Section~\ref{sec:appllications-semig} two applications to
parabolic problems associated with vector-valued Sturm--Liouville operators.

Section~\ref{sec:main_proof} is devoted to the proof of Theorem~\ref{thm:main}, which is subdivided in several lemmata. In
Section~\ref{sec:proof-of-main} we finally prove that the elements of spectral subspaces associated with a large class of
self-adjoint realisations of the magnetic Laplacian with general (possibly non-separated) self-adjoint boundary conditions forms
another, and more substantial, class of functions that satisfy Bernstein inequalities, thus completing the proof of
Theorem~\ref{thm:Laplacian}.

Finally, we specialize our findings and study the localization property of eigenfunctions of the standard Laplacian on metric
graphs. This is arguably the most interesting class of non-separated self-adjoint realisations of the free Laplacian. After
presenting a first (simple but non-trivial) example of a function satisfying the Bernstein inequality
(Example~\ref{exa:torsion}), namely the torsion function briefly discussed above, we present in Corollary~\ref{cor:appl-metr-gr}
an application of our theory to elements of spectral subspaces of the standard Laplacian.

Checking the Bernstein condition for self-adjoint realisations of the magnetic Laplacian -- the crucial step in the proof of
Theorem~\ref{thm:Laplacian} -- is based on a technically somewhat involved formalism  that allows us to describe when higher
order operators on metric graphs are powers of self-adjoint magnetic Laplacians. Such operators have been studied since
\cite{KosSch03}, see especially~\cite{Pan06c,Kur10,BerWey12}. For the sake of self-containedness, we recall some basic aspects of
their theory in the Appendix -- Section~\ref{sec:elliptic-metric}.

\section{Main results}\label{sec:notationAndMainResults}

In this section, we present the main results of the present work, along with the notational framework and the main geometric
concept for control subsets.

\subsection{Basic framework}\label{ssec:framework}
Let $\mE$ be a finite or countably infinite set. We consider a family $(\ell_\me)_{\me\in\mE} \subset (0,\infty]$ and set
$I_\me := \overline{[0,\ell_\me)}$. We refer to each $\me$ as an \emph{edge}, and to $\ell_\me$ as its \emph{length}. Note that
$I_\me = [0,\ell_\me]$ if $\ell_\me < \infty$ and $I_\me = [0,\infty)$ if $\ell_\me = \infty$. In particular, we allow for edges
of infinite length, i.e., $\ell_\me = \infty$.

If $\ell_\me\equiv \ell$, then in view of the isomorphism $L^2((0,\ell))\otimes \CC^\mE\simeq \bigoplus_{\me\in\mE} L^2((0,\ell))$
we can regard vector-valued Sturm--Liouville problems as a family (indexed in $\mE$) of scalar-valued Sturm--Liouville problems.
We are, however, mainly interested in the general case of edges of possibly different lengths. We therefore focus right away on
families of functions $f_\me \colon I_\me\to \CC$, $\me \in \mE$, which can, by construction, be
identified with functions $f \colon \cE \to \CC$, where
\begin{equation}\label{eq:defEdgeSet}
	\cE
	:=
	\bigsqcup\limits_{\me\in\mE} I_\me
	.
\end{equation}
We may then consider the (canonically defined) function space $L^2(\cE) = \bigoplus_{\me\in\mE} L^2((0,\ell_\me))$ endowed with
the norm
\[
	\norm{f}{L^2(\cE)}
	=
	\biggl( \sum_{\me\in\mE} \norm{f_{\me}}{L^2((0,\ell_\me))}^2 \biggr)^{1/2}
	.
\]
We also introduce
\[
	L_\loc^1(\cE)
	:=
	\{ f \colon \cE \to \CC \mid f_\me \in L_\loc^1(I_\me) \quad\forall \me \in \mE \}
\]
and
\[
	W_\loc^{1,1}(\cE)
	:=
	\{ f \colon \cE \to \CC \mid f_\me \in W_\loc^{1,1}(I_\me) \quad\forall \me \in \mE \}
	.
\]

We impose the following hypothesis.
\begin{assum}\label{assum-main}
	The edge set $\cE$ is of \emph{semi-bounded geometry}, that is,
	\[
	\llower
	:=
	\inf_{\me \in \mE} \ell_\me
	>
	0
	.
	\]
\end{assum}

With the aim of parametrising different self-adjoint realisations of second order differential operators, it is useful to
distinguish between edges of finite and infinite length, which we call \emph{internal} and \emph{external edges},
respectively. We therefore set
\[
	\mE_\internal
	:=
	\{ \me \in \mE \mid \ell_\me < \infty \}
	\quad \text{ and }\quad  
	\mE_\external
	:=
	\mE \setminus \mE_\internal
	=
	\{ \me \in \mE \mid \ell_\me = \infty \}
	.
\]
Under the Assumption~\ref{assum-main}, for every real-valued $A \in L_\loc^1(\cE)$ we may consider the self-adjoint realisation
$\Delta_{A,Y}$ of the \textit{magnetic Laplacian} in $L^2(\cE)$ \textit{associated with a closed subspace} $Y$ of
$\ell^2(\mE) \oplus \ell^2(\mE_\internal)$. More precisely, setting
\[
	W_A(\cE)
	:=
	\{ f \in L^2(\cE) \cap W_\loc^{1,1}(\cE) \mid \ii f' + Af \in L^2(\cE) \}
\]
with $f' = (f'_\me)_{\me\in\mE}$, the operator $\Delta_{A,Y}$ in $L^2(\cE)$ defined by
\begin{equation}\label{eq:defMagLap}
	\begin{aligned}
		\Dom(\Delta_{A,Y})
		&=
		\bigl\{ f \in W_A(\cE) \mid \ii f' + Af \in W_A(\cE)
		,\
		\bdEv_+(f) \in Y,\ \bdEv_-(\ii f'+Af) \in Y^\perp \bigr\},\\
		\Delta_{A,Y} f
		&=
		-\ii(\ii f' + Af)' - A(\ii f' + Af)
		,\quad
		f \in \Dom(\Delta_{A,Y})
		,
	\end{aligned}
\end{equation}
is self-adjoint and non-positive, where
\[
	\bdEv_\pm(g)
	=
	(\pm g_\me(0))_{\me\in\mE} \oplus (g_\me(\ell_\me))_{\me\in\mE_\internal}
	,\quad
	g \in W_A(\cE)
	,
\]
and where $Y^\perp$ denotes the orthogonal complement of $Y$ in $\ell^2(\mE) \oplus \ell^2(\mE_\internal)$; note that by Sobolev
embedding, for $g = (g_\me)_{\me\in\mE} \in W_A(\cE)$ each $g_\me$ can be identified with a continuous function on $I_\me$. The
particular case of $A = 0$ gives corresponding realisations of the free Laplacian and covers (upon a suitable choice of $Y$) those
with separated boundary conditions, including Dirichlet, Neumann, or mixed conditions on each edge.
Moreover, in the context of metric graphs as mentioned in the introduction, this also covers (for $A = 0$) the
\emph{standard Laplacian} with continuity and Kirchhoff-type vertex conditions, as well as realisations with anti-Kirchhoff vertex
conditions, see Example~\ref{exa:vert-cond} below.

A more detailed review of the magnetic Laplacians $\Delta_{A,Y}$ is presented in Section~\ref{sec:proof-of-main} and
Appendix~\ref{sec:elliptic-metric} below. Also, a related technical result to study the powers {$\Delta_{A,Y}^m$} needed for our
main results is discussed in Proposition~\ref{prop:powerLaplace} and may be of independent interest.

We aim for estimates of the form
\begin{equation}\label{eq:aim}
	\norm{ \chi_\omega f }{L^2(\cE)}^2
	\geq
	C(\omega) \norm{f}{L^2(\cE)}^2
\end{equation}
for certain classes of functions in $L^2(\cE)$ and certain subsets $\omega \subset \cE$, where $\chi_\omega$ denotes the
characteristic function of $\omega$. Here, one usually expects that the constant $C(\omega)$ can be chosen the larger the more
well-spread the set $\omega$ is along the edges. Moreover, estimates of the above form imply that $f$ already has to vanish on the
whole $\cE$ if it vanishes on $\omega$. Therefore, the portion of $\omega$ on each edge has to be large enough in comparison with
the edge itself, equivalently, $\omega$ has to be ``well-distributed'' in $\cE$. The following geometric definition formalises
this idea and provides an adaptation of the corresponding geometric condition for $\RR^d$, first appearing in
\cite{LogvinenkoS-74,Kacnelson-73}. Here and in the following, we denote by $\abs{\cdot}$ the usual $1$-dimensional Lebesgue
measure.

\begin{defin}\label{def:gammarhosampling}
	A measurable set $\omega_\me \subset I_\me$ is called \emph{$(\gamma,\rho)$-sampling in $I_\me$} for some $\gamma \in (0,1]$ and
	some $\rho>0$ if there is a finite or countably infinite family $(J_{\me,k})_k$ of closed intervals $J_{\me,k} \subset I_\me$
	such that
	\begin{itemize}
		
		\item
		$\bigcup_k J_{\me,k}=I_\me$,
		
		\item
		the intervals $J_{\me,k}$ have mutually disjoint interior,
		
		\item
		the length of each $J_{\me,k}$ is at most $\rho$,
		
		\item
		$\abs{ \omega_\me \cap J_{\me,k} } \geq \gamma \abs{J_{\me,k}}$ for all $k$.
		
	\end{itemize}
	We say that $\omega = \bigsqcup_{\me\in\mE}\omega_\me \subset \cE$ is \emph{$(\gamma,\rho)$-sampling in $\cE$} if each
	$\omega_{\me} \subset I_\me$ is $(\gamma,\rho)$-sampling in $I_\me$.
\end{defin}

Examples and properties of sampling sets are discussed after the main results below.

\subsection{Main results}\label{ssec:mainresults}
Our first main result now establishes estimates of \sout{the} type \eqref{eq:aim} for sampling sets $\omega$ and for functions and
their magnetic derivatives in spectral subspaces $\Ran\PP_{-\Delta_{A,Y}}(\lambda)$ for magnetic Laplacians $\Delta_{A,Y}$, where
$\PP_{-\Delta_{A,Y}}(\lambda)$ denotes the spectral projection for $-\Delta_{A,Y}$ associated to the energy interval
$(-\infty,\lambda]$. The corresponding constant $C(\omega)$ then depends on the sampling parameters of the set $\omega$ and the
energy level $\lambda$.

\begin{thm}\label{thm:Laplacian}
	Let $\cE$ satisfy Assumption~\ref{assum-main}, and let $\Delta_{A,Y}$ be defined as in \eqref{eq:defMagLap} with some
	real-valued $A \in L_\loc^1(\cE)$ and some closed subspace $Y \subset \ell^2(\mE) \oplus \ell^2(\mE_\internal)$. Then, for every
	function $f\in \Ran\PP_{-\Delta_{A,Y}}(\lambda)\setminus \{0\}$, $\lambda \geq 0$, and every $(\gamma,\rho)$-sampling set
	$\omega \subset \fE$ we have
	\begin{equation}\label{eq:eigenfunctions}
		\norm{\chi_\omega f}{L^2(\cE)}^2
		>
		12\Big(\frac{\gamma}{48}\Big)^{\frac{40\rho\sqrt{\lambda}}{\log 2}+5} \norm{f}{L^2(\cE)}^2
		,
	\end{equation}
	as well as
	\begin{equation}\label{eq:eigenfunctionsDer}
		\norm{\chi_\omega (\ii f' + Af)}{L^2(\cE)}^2
		>
		12\Big(\frac{\gamma}{48}\Big)^{\frac{40\rho\sqrt{\lambda}}{\log 2}+5} \norm{(\ii f' + Af)}{L^2(\cE)}^2
		.
	\end{equation}
\end{thm}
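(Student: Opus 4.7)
The plan is to deduce Theorem~\ref{thm:Laplacian} from the general Bernstein-type result Theorem~\ref{thm:main} by showing that any $f\in \Ran\PP_{-\Delta_{A,Y}}(\lambda)$ satisfies a Bernstein inequality of the form required by Definition~\ref{def:Bernstein}, with constants controlled by $\lambda$. Once this is in place, Theorem~\ref{thm:main} directly supplies \eqref{eq:eigenfunctions}, and tracking how the sampling parameters $\gamma$, $\rho$ feed into the abstract bound produces the explicit factor $(\gamma/48)^{\,\cdots}$ appearing in the statement.

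To verify the Bernstein condition, I would first invoke the spectral theorem to obtain
\[
	\norm{ (-\Delta_{A,Y})^k f }{L^2(\cE)}^2
	\leq
	\lambda^{2k} \norm{f}{L^2(\cE)}^2
	\qquad \text{for every } k\in\NN_0.
\]
The task is then to translate these iterated-Laplacian bounds into bounds on the ordinary derivatives $f^{(m)}$ appearing in \eqref{eq:bernstein-intro}. This is the role of Proposition~\ref{prop:powerLaplace}, which identifies, under suitable conditions, the action of $(-\Delta_{A,Y})^k$ with a higher-order differential expression built from $\ii\partial+A$ and its formal adjoint. Combined with a local gauge transformation $f\mapsto \ee^{\ii \phi}f$ with $\phi'=A$ on each individual edge of $\cE$, which removes $A$ from the magnetic derivative, I would relate $\norm{f^{(m)}}{L^2(\cE)}$ to $\norm{(-\Delta_{A,Y})^{\lceil m/2\rceil} f}{L^2(\cE)}$ up to lower-order corrections that are in turn estimated by the same spectral bound. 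This should yield $\norm{f^{(m)}}{L^2(\cE)}^2 \leq (c\lambda)^m \norm{f}{L^2(\cE)}^2$ with a constant $c$ independent of $m$, which has the summability needed to feed into Theorem~\ref{thm:main}.

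For the second estimate \eqref{eq:eigenfunctionsDer}, I would exploit the supersymmetric factorisation $-\Delta_{A,Y} = T^\ast T$ with $T = \ii \partial + A$, the domain of $T$ encoding the ``$Y$-part'' of the boundary conditions (in the spirit of \cite{HundertSimon-03} and the factorisations recalled in \cite{BerKuc13,Pos12}). The partner operator $TT^\ast$ is again a self-adjoint magnetic Laplacian $-\Delta_{A,Y'}$ on $\cE$ for a different closed subspace $Y'\subset \ell^2(\mE)\oplus\ell^2(\mE_\internal)$; the two operators agree spectrally off zero, and $T$ intertwines their spectral projections. Consequently $g := \ii f'+Af = Tf$ belongs to $\Ran\PP_{-\Delta_{A,Y'}}(\lambda)$ with
\[
	\norm{g}{L^2(\cE)}^2
	=
	\langle -\Delta_{A,Y}f,f\rangle
	\leq
	\lambda\norm{f}{L^2(\cE)}^2,
\]
so applying the first two paragraphs to $g$ and the partner Laplacian $\Delta_{A,Y'}$ produces \eqref{eq:eigenfunctionsDer} with identical constants.

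The main obstacle is the verification of the Bernstein condition: one has to track how the non-separated boundary conditions encoded by $Y$ propagate through successive applications of $T$ and $T^\ast$, to identify which higher-order operator actually coincides with a power of $\Delta_{A,Y}$, and to ensure that the edgewise gauge reduction is compatible with the $L^2$-summation over $\cE$ in the presence of edges of infinite length. The technical formalism announced in Section~\ref{sec:elliptic-metric} and encapsulated in Proposition~\ref{prop:powerLaplace} is precisely what makes this bookkeeping tractable.
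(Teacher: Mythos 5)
Your overall strategy matches the paper's: reduce to Theorem~\ref{thm:main} by establishing a Bernstein inequality via spectral bounds on powers of the Laplacian, with a gauge transformation to handle $A$, and obtain \eqref{eq:eigenfunctionsDer} via the supersymmetric pairing $T^*T \leftrightarrow TT^*$ together with the intertwining of spectral projections. The factorisation paragraph is entirely correct and is precisely what the paper uses (Corollary~\ref{cor:Laplace}\,(b)).

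However, there is a genuine gap in the middle paragraph. You propose to bound $\norm{f^{(m)}}{L^2(\cE)}$ by $\norm{(-\Delta_{A,Y})^{\lceil m/2\rceil}f}{L^2(\cE)}$ ``up to lower-order corrections,'' yielding $C_B(m)=(c\lambda)^m$ for some unspecified $c$. This cannot reproduce the exact exponent $\tfrac{40\rho\sqrt{\lambda}}{\log 2}+5$ in \eqref{eq:eigenfunctions}, which requires $C_B(m)=\lambda^m$ exactly: the sum in \eqref{eq:BernsteinSum} then evaluates to $h=\exp(10\rho\sqrt{\lambda})$, giving $\tfrac{4\log h}{\log 2}=\tfrac{40\rho\sqrt{\lambda}}{\log 2}$, whereas $C_B(m)=(c\lambda)^m$ would replace $\sqrt\lambda$ by $\sqrt{c\lambda}$. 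Worse, since $A$ is merely $L^1_\loc$, any ``lower-order correction'' would involve derivatives or moments of $A$ that are not under control, and in fact $f'$ itself need not lie in $L^2(\cE)$ for $f\in W_A(\cE)$. The key that avoids all this (and that you brush past) is that the Bernstein inequality is proved for $\gauge_Af$, not for $f$: the gauge identity $(\gauge_Af)^{(m)}=(-\ii)^m\gauge_A((\ii\partial+A)^mf)$ is \emph{exact}, as is the integration-by-parts identity $\norm{(\ii\partial+A)^mf}{L^2(\cE)}^2=\langle(-\Delta_{A,Y})^mf,f\rangle_{L^2(\cE)}$ of Proposition~\ref{prop:powerLaplace}, which uses the duality $D_{A,Y}^*=D_{A,\boundarySign Y^\perp}$ to move derivatives across without boundary terms. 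Chaining the two and applying the functional calculus gives
$\norm{(\gauge_Af)^{(m)}}{L^2(\cE)}^2=\langle(-\Delta_{A,Y})^mf,f\rangle_{L^2(\cE)}\le\lambda^m\norm{\gauge_Af}{L^2(\cE)}^2$,
with no corrections whatsoever. You then apply Theorem~\ref{thm:main} to $\gauge_Af$ (with $h=\exp(10\rho\sqrt\lambda)$), and transfer the conclusion to $f$ using $\abs{\gauge_Af}=\abs{f}$ pointwise, hence $\norm{\chi_\omega\gauge_Af}{L^2(\cE)}=\norm{\chi_\omega f}{L^2(\cE)}$. Without this two-step exact reduction, the argument either fails for rough $A$ or gives a weaker constant than claimed.
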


It is worth to note that if $\Delta_{A,Y}$ has purely discrete spectrum, then the functions in the spectral subspace
$\Ran\PP_{-\Delta_{A,Y}}(\lambda)$ under consideration in Theorem~\ref{thm:Laplacian} are just finite linear combinations of
eigenfunctions associated to eigenvalues not exceeding $\lambda$. This is the case precisely if $\Dom(\Delta_{A,Y})$
(equivalently, $\Dom((-\Delta_{A,Y})^{1/2})$) is compactly embedded in $L^2(\cE)$ with respect to the corresponding graph norm.

Theorem~\ref{thm:Laplacian} is a consequence of a more general result for those functions in
\[
W^{\infty,2}(\cE)
:=
\bigcap_{m\in\NN} W^{m,2}(\cE)
:=
\bigcap_{m\in\NN} \bigoplus_{\me \in \mE} W^{m,2}((0,\ell_\me))
,
\]
where $W^{m,2}((0,\ell_\me))$ is the usual $L^2$-Sobolev space of order $m$ on the interval $(0,\ell_\me)$, satisfying a
Bernstein-type inequality in the following sense.

\begin{defin}\label{def:Bernstein}
	We say that $f \in W^{\infty,2}(\cE)$ satisfies a \emph{Bernstein-type inequality with respect to
		$C_B \colon \NN_0 \to [0,\infty)$} if
	\begin{equation}\label{eq:bernstein}
		\norm{ f^{(m)} }{L^2(\cE)}^2
		\leq
		C_B(m) \norm{f}{L^2(\cE)}^2
		\quad \text{ for all }\
		m\in\NN_0
	\end{equation}
	with $f^{(m)} = (f_\me^{(m)})_{\me\in\mE}$.
\end{defin}

Provided that $f \neq 0$, the above definition clearly requires that for each $m \in \NN_0$ the constant $C_B(m)$ is larger or
equal to $\norm{f^{(m)}}{L^2(\cE)}^2 / \norm{f}{L^2(\cE)}^2$, and in some sense the optimal choice would be
with equality. However, allowing here for an inequality opens the way to an estimate of the form \eqref{eq:aim} that is uniform
over a class of functions. For instance, it turns out that suitable transformations of the functions in
$\Ran\PP_{-\Delta_{A,Y}}(\lambda)$ considered in Theorem~\ref{thm:Laplacian} satisfy such a Bernstein-type inequality with respect
to $C_B(m) = \lambda^m$, $m \in \NN_0$, see Corollary~\ref{cor:powerLaplace} below. 

\begin{thm}\label{thm:main}
	Let $\cE$ be as in \eqref{eq:defEdgeSet}, and suppose that $f \in W^{\infty,2}(\cE) \setminus \{0\}$ satisfies a
	Bernstein-type inequality with respect to $C_B \colon \NN_0 \to [0,\infty)$.
	
	If
	\begin{equation}\label{eq:BernsteinSum}
		h
		:=
		\sum_{m\in\NN_0} (C_B(m))^{1/2} \frac{(10\rho)^m}{m!}
		<
		\infty
	\end{equation}
	for some $\rho > 0$, then, for every $(\gamma,\rho)$-sampling set $\omega \subset \cE$ we have
	\begin{equation}\label{eq:BernsteinSum-2}
		\norm{\chi_\omega f}{L^2(\cE)}^2
		>
		12\Big(\frac{\gamma}{48}\Big)^{\frac{4\log h}{\log 2}+5} \norm{f}{L^2(\cE)}^2
		.
	\end{equation}
\end{thm}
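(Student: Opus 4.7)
The plan is to adapt the Kovrijkine-type strategy underlying the classical Logvinenko--Sereda inequality, already employed in the abstract framework on Euclidean domains on which the paper builds: localize to each sampling interval $J_{\me,k}$, classify these intervals as good or bad according to how well the global Bernstein bound survives under localization, construct a controlled holomorphic extension of $f$ on the good intervals, and apply a Remez--Tur\'an-type inequality to pass from $L^\infty$-norms on $J_{\me,k}$ to $L^\infty$-norms on $\omega\cap J_{\me,k}$.

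First I would dichotomize the family of sampling intervals. Since the $J_{\me,k}$ partition $\cE$ up to null sets, the hypothesis \eqref{eq:bernstein} together with the definition of $h$ yields
\[
\sum_{\me,k}\;\sum_{m\in\NN_0}\frac{(10\rho)^{2m}}{(m!)^2}\,\|f^{(m)}\|_{L^2(J_{\me,k})}^2 \le h^2\,\|f\|_{L^2(\cE)}^2.
\]
Declaring $J_{\me,k}$ \emph{bad} when the inner sum on the left exceeds $2h^2\|f\|_{L^2(J_{\me,k})}^2$, a Chebyshev-type argument bounds the total $L^2$-mass of $f$ on bad intervals by $\tfrac12\|f\|_{L^2(\cE)}^2$, so the good intervals carry at least half of $\|f\|_{L^2(\cE)}^2$.

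On each good interval $J = J_{\me,k}$ of length at most $\rho$, the term-wise bound $\|f^{(m)}\|_{L^2(J)} \le \sqrt{2}\,h\,m!/(10\rho)^m\,\|f\|_{L^2(J)}$ holds uniformly in $m$. Combined with elementary pointwise control of $f$ in terms of its $L^2$-norm and first derivative on the bounded interval $J$, this controls each Taylor coefficient of $f$ at any point of $J$ and produces a holomorphic extension $\tilde f$ of $f$ to a complex disk $D$ of radius comparable to $\rho$ around $J$, with a sup-norm estimate of the form $\|\tilde f\|_{L^\infty(D)}\lesssim h\cdot M(f,J)$, where $M(f,J)$ is an appropriate pointwise representative of $f$ on $J$. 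A Remez--Tur\'an inequality applied to $\tilde f$ on $J$ then yields
\[
\|f\|_{L^\infty(J)} \le \Bigl(\frac{48}{\gamma}\Bigr)^{N}\,\|f\|_{L^\infty(\omega\cap J)},
\]
with $N$ the smallest integer for which the Taylor tail of $\tilde f$ past degree $N$ falls below a fixed fraction of $\|f\|_{L^\infty(J)}$; the summability \eqref{eq:BernsteinSum} forces $N$ to be of order $\log h/\log 2$, which is precisely where the exponent $4\log h/\log 2+5$ in \eqref{eq:BernsteinSum-2} originates.

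I expect the main technical obstacle to be this Remez--Tur\'an step: producing the power-law dependence on $\gamma$ with the exponent calibrated \emph{exactly} as in \eqref{eq:BernsteinSum-2} requires a quantitative local Remez inequality on $J$ together with sharp bookkeeping of the constants arising in the holomorphic extension and in the passage between $L^2$ and $L^\infty$ on intervals of length at most $\rho$. Once this is in place, an $L^\infty$-to-$L^2$ conversion on $J$ (choosing $x_0\in J$ so that $|f(x_0)|^2\gtrsim \|f\|_{L^2(J)}^2/|J|$), restriction to $\omega\cap J$ of measure at least $\gamma|J|$, and summation over the good intervals (which by the first step carry at least half of $\|f\|_{L^2(\cE)}^2$) give the strict inequality \eqref{eq:BernsteinSum-2}; the prefactor $12$ and the additive $+5$ in the exponent absorb the mass loss on bad intervals together with the elementary constants from the pointwise and $L^\infty$-$L^2$ comparisons on $J$.
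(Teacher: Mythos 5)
Your proposal follows the same Kovrijkine-style strategy as the paper -- dichotomize into good and bad subintervals, extend $f$ holomorphically on the good ones, and transfer the $L^2$ mass from the full interval to $\omega$ via a local Remez/Tur\'an-type estimate -- so the overall architecture is the right one. Two of the middle steps are, however, carried out differently in the paper, and the differences matter for getting the announced constants.

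First, for the quantitative Taylor coefficient bound you say you control ``each Taylor coefficient of $f$ at \emph{any} point of $J$'' via Sobolev embedding. The paper does not do this: it uses Sobolev embedding only to establish qualitative analyticity on good edges, and then, to get the quantitative bound that calibrates the radius of the holomorphic extension, it shows by an averaging/Chebyshev argument that there \emph{exists a single} center $x_\me \in (0,\ell_\me)$ at which all derivatives satisfy $\abs{f_\me^{(m)}(x_\me)} \le 2^{m+1} C_B(m)^{1/2} \ell_\me^{-1/2}\norm{f_\me}{L^2}$ simultaneously. This sidesteps the Sobolev embedding constant entirely (which degrades on short intervals) and is the clean way to pin down the factor $2h$ in the sup-norm bound for the extension. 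The point $x_\me$ is in general \emph{not} the point $x_0$ where $\abs{f}$ is large in $L^\infty$-vs-$L^2$ comparison; the two serve different roles and are chosen separately in the paper's proof of the local estimate.

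Second, your Remez--Tur\'an step truncates the Taylor series to degree $N\sim \log h/\log 2$ and applies a polynomial Remez inequality, reading off the exponent from $N$. The paper instead applies Kovrijkine's analytic lemma directly to the bounded holomorphic extension $G$ on $(0,\ell_\me)+D_{4\ell_\me}$: the exponent $4\log M/\log 2 + 1$ with $M = \sqrt{\ell}\,\sup\abs{G}/\norm{g}{L^2}$ comes out of that lemma with no truncation at all. The truncation route is genuinely more fragile here -- one has to control the tail error uniformly over $\omega\cap J$, and the choice of $N$ feeds back into the Remez constant -- and it is not clear it reproduces the stated exponent and the strict inequality. Also, a small bookkeeping correction: the $+5$ in the exponent of \eqref{eq:BernsteinSum-2} does not absorb the mass loss on bad intervals (that is what turns the $24$ from the local estimate into $12$); the $+5$ is exactly $1 + 4\log 2/\log 2$, coming from the ``$+1$'' in the local estimate and from the factor $2$ in $M_\me \le 2h$.
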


It is worth to emphasise that Theorem~\ref{thm:main} holds under no additional assumptions on the geometry of $\cE$. In
particular, for the scope of Theorem~\ref{thm:main} we do \emph{not} need to impose Assumption~\ref{assum-main}.

Moreover, we point out that, although Theorem~\ref{thm:Laplacian} and our main applications in
Section~\ref{sec:appllications-semig} deal with classes of functions, Theorem~\ref{thm:main} is tailored towards individual
functions and applies, for instance, to polynomials, which trivially satisfy a Bernstein-type inequality with respect to an
eventually vanishing $C_B$. A less usual function satisfying a Bernstein-type inequality is the torsion function from the
introduction, which is edgewise a quadratic polynomial, see Example~\ref{exa:torsion} below.

\subsection{Discussion on sampling sets}\label{ssec:discussions}

We now discuss in more detail the notion of sampling sets from Definition~\ref{def:gammarhosampling}, as well as its relation to
our main results and further consequences.

We first collect some elementary observations regarding this notion.

\begin{rem}\label{rem:sampling}
	\begin{enumerate}[(1)]	
		
		\item
		The sets $\omega_\me$ in Definition~\ref{def:gammarhosampling} are only assumed to be measurable, but not necessarily to be
		open. For instance, each $\omega_\me$ may also be a fractal set of positive measure, cf.\ part \eqref{exa:volterra} of
		Example~\ref{exa:basic} below.
		
		\item
		The parameter $\gamma$ in Definition~\ref{def:gammarhosampling} measures the proportion of the measure of $\omega$ per portion
		of each edge. These portions are given in terms of the adjacent intervals of length less than or equal to $\rho$ that cover
		each edge.
		
		\item
		The smaller $\rho$ in Definition~\ref{def:gammarhosampling} (with fixed $\gamma$) the more well-distributed $\omega$ has to be
		along each edge. Indeed, $\rho$ determines an upper bound for the size of gaps $\omega$ is allowed to have on each edge. More
		precisely, it is easy to see that the length of gaps each $\omega_\me$ can have is at most $(1-\gamma)\rho$ at the endpoints
		and $2(1-\gamma)\rho$ in the interior.
		
		\item
		If $\omega \subset \cE$ is $(\gamma,\rho)$-sampling in $\cE$ for some $\rho > 0$ and $\gamma \in (0,1]$, then $\omega$ is also
		$(\gamma',\rho')$-sampling in $\cE$ for all $\rho' \geq \rho$ and $\gamma' \leq \gamma$. Hence, the prefactor of the norm in
		the right-hand sides of \eqref{eq:eigenfunctions}, \eqref{eq:eigenfunctionsDer}, and \eqref{eq:BernsteinSum-2}
		obviously gets larger the smaller $\rho$ and the larger $\gamma$ are. This is consistent with the discussion
		preceding Definition~\ref{def:gammarhosampling}.
		
		\item
		Suppose that for each $\me \in \mE$ the set $\omega_\me \subset I_\me$ is $(\gamma_\me,\rho_\me)$-sampling in $I_\me$ such that
		$\gamma := \inf_{\me\in\mE} \gamma_\me > 0$ and $\rho := \sup_{\me \in \mE} \rho_\me < \infty$. Then the set
		$\omega = \bigsqcup_{\me \in \mE} \omega_\me \subset \cE$ is $(\gamma,\rho)$-sampling in $\cE$.
		
		\item\label{it:rhoOptimal}
		There is always an optimal (i.e., minimal) choice of $\rho$ with respect to a fixed density $\gamma$. More precisely, if
		$\omega$ is $(\gamma,\rho)$-sampling in $\cE$, then $\omega$ is also $(\gamma,\tilde{\rho})$-sampling in $\cE$ with
		$\tilde{\rho} := \sup_{\me\in\mE} \sup_k \abs{J_{\me,k}} \leq \rho$, where $(J_{\me,k})_k$ denotes any family of adjacent
		intervals covering $I_\me$ consistent with the definition of $\omega$ being $(\gamma,\rho)$-sampling.
		
		\item
		Similarly as in \eqref{it:rhoOptimal}, there is always an optimal (i.e., maximal) choice of $\gamma$ with respect to a fixed
		scale $\rho$. More precisely, if $\omega$ is $(\gamma,\rho)$-sampling in $\cE$, then $\omega$ is also
		$(\tilde{\gamma},\rho)$-sampling in $\cE$ with $\tilde{\gamma} := \inf_{\me\in\mE} \tilde{\gamma}_\me \geq \gamma$, where
		\[
		\tilde{\gamma}_\me
		=
		\inf_k \frac{\abs{\omega_\me \cap J_{\me,k}}}{\abs{J_{\me,k}}}
		\geq
		\gamma
		.
		\]
		Here, $(J_{\me,k})_k$ again denotes any family of adjacent intervals covering $I_\me$ consistent with the
		definition of $\omega$ being $(\gamma,\rho)$-sampling.
		
	\end{enumerate}
\end{rem}

We now discuss several choices of sets $\omega$.

\begin{exa}\label{exa:basic}
	\begin{enumerate}[(1)]
		
		\item
		$\omega = \cE$ and $\omega = \bigsqcup_{\me\in\mE}(0,\ell_\me)$ are both $(1,\rho)$-sampling in $\cE$ for all $\rho > 0$.
		
		\item
		For $\mE = \NN$ and $\cE = \bigsqcup_{k\in\NN}[0,1]$, the choice $\omega_k = [0,1/k]$ leads to an $(1/k,1)$-sampling set in the
		corresponding edge. But $\inf_{k\in\NN} \abs{\omega_k} = 0$, so the set $\omega = \bigsqcup_{k\in\NN} \omega_k$ can \emph{not}
		be $(\gamma,\rho)$-sampling in $\cE$ for any choice of $\gamma$ and $\rho$.
		
		\item
		Suppose that $\ell_\me < \infty$ (that is, $\me \in \mE_\internal$), and consider the three sets
		$\omega_\me^{(1)} = (0,\ell_\me/2)$, $\omega_\me^{(2)} = (0,\ell_\me/4) \cup (3\ell_\me/4,\ell_\me)$, and
		$\omega_\me^{(3)} = (\ell_\me/4,3\ell_\me/4)$. All three are obviously $(1/2,\rho)$-sampling in $\me$ for $\rho \geq \ell_\me$
		with the trivial one interval covering. On the other hand, $\omega_\me^{(1)}$ and $\omega_\me^{(2)}$ both have a gap of length
		$\ell_\me/2$, whereas $\omega_\me^{(3)}$ has two gaps of length only $\ell_\me/4$, and they show a different behaviour with
		respect to sampling properties for $\rho < \ell_\me$: $\omega_\me^{(1)}$ is $(1-\ell_\me/(2\rho),\rho)$-sampling in $I_\me$ for
		$\ell_\me/2 < \rho < \ell_\me$ via $[0,\ell_\me] = [0,\ell_\me-\rho] \cup [\ell_\me-\rho,\ell_\me]$ and is \emph{not}
		$(\gamma,\rho)$-sampling in $I_\me$ for any choice of $\gamma$ for $\rho \leq \ell_\me/2$ due to the gap
		$[\ell_\me/2,\ell_\me]$.
		By contrast, both sets $\omega_\me^{(2)}$ and $\omega_\me^{(3)}$ are \emph{not} $(\gamma,\rho)$-sampling in $I_\me$ for any
		choice of $\gamma$ for $\rho \leq \ell_\me/4$ due to the gap $[\ell_\me/4,3\ell_\me/4]$ of length $\ell_\me/2$ in the interior
		of $\omega_\me^{(2)}$ and the gaps $[0,\ell_\me/4]$ and $[3\ell_\me/4,\ell_\me]$ of length $\ell_\me/4$ at the endpoints of
		$\omega_\me^{(3)}$, respectively. However, they are both $(1/2,\rho)$-sampling in $I_\me$ for $\ell_\me/2 \leq \rho < \ell_\me$
		via $[0,\ell_\me] = [0,\ell_\me/2] \cup [\ell_\me/2,\ell_\me]$, and $(1-\ell_\me/(4\rho),\rho)$-sampling in $I_\me$ for
		$\ell_\me/4 < \rho < \ell_\me/2$. The latter can be seen for $\omega_\me^{(2)}$ via the covering
		\[
		[0,\ell_\me]
		=
		[0,\ell_\me/2-\rho] \cup [\ell_\me/2-\rho,\ell_\me/2] \cup [\ell_\me/2,\ell_\me/2+\rho] \cup [\ell_\me/2+\rho,\ell_\me]
		,
		\]
		and for $\omega_\me^{(3)}$ via
		\[
		[0,\ell_\me]
		=
		[0,\rho] \cup [\rho,\ell_\me/2] \cup [\ell_\me/2, \ell_\me/2+\rho] \cup [\ell_\me-\rho,\ell_\me]
		.
		\]
		
		\item
		Suppose that $\ell_\me = \infty$ (that is, $\me \in \mE_\external$), and let $\omega_\me$ be a measurable $1$-periodic set with
		$\abs{ \omega_\me \cap [0,1] } = \gamma$ for some $\gamma \in (0,1)$. Then, $\omega_\me$ has gaps of length at most $1-\gamma$
		and is clearly $(\gamma,1)$-sampling in $I_\me$ via the covering $[0,\infty) = \bigcup_{k\in\NN_0} [k,k+1]$. In fact, for every
		measurable subset $A \subset [0,1]$ with measure $\abs{A} > 1-\gamma$ we have
		$\abs{\omega_\me \cap A} \geq \abs{A} - (1-\gamma)$. Hence, taking into account periodicity of $\omega_\me$ and using the
		covering $[0,\infty) = \bigcup_{k\in\NN_0} [k\rho,(k+1)\rho]$ with $\rho > 1-\gamma$, it is easy to see that $\omega_\me$ is
		$(1-(1-\gamma)/\rho,\rho)$-sampling in $I_\me$ for $1-\gamma < \rho \leq 1$, $(\gamma/\rho,\rho)$-sampling for
		$1 < \rho \leq 2-\gamma$, $(1-2(1-\gamma)/\rho,\rho)$-sampling for $2-\gamma < \rho \leq 2$, and so forth. In particular,
		$\omega_\me$ is $(\gamma/(2-\gamma),\rho)$-sampling in $I_\me$ for all $\rho \geq 1$.
		
		\item\label{exa:volterra}
		Suppose that $\ell_\me=1$ and let $\omega_\me$ be the Smith--Volterra--Cantor set, i.e., the set obtained by successively
		removing intervals of length $1/4^n$ from the middle of each of the previously obtained $2^{n-1}$ intervals. This set is closed
		with empty interior and has measure $1/2$. Since the largest gap of $\omega_\me$ has length $1/4$, it fails to be
		$(\gamma,\rho)$-sampling for any choice of $\gamma$ if $\rho\in(0,1/8]$. However, it is $(1/2,1/2)$-sampling via
		$[0,1]=[0,1/2]\cup[1/2,1]$, as each intervals contains half of $\omega_\me$, as well as $(4/9,9/32)$-sampling via
		$[0,1]=[0,7/32] \cup [7/32,1/2] \cup [1/2,25/32] \cup [25/32,1]$, as each interval contains a portion of $\omega_\me$ of
		measure $1/8$.
		
	\end{enumerate}
\end{exa}

We conclude the section by commenting on a possible extension of Definition~\ref{def:gammarhosampling} and its
influence on the results.

\begin{rem}\label{rem:extension}
	It is possible to extend Theorem~\ref{thm:main} by relaxing the notion of sampling sets to allow the intervals $J_{\me,k}$ to
	mutually overlap. For some sets, this could result in slightly preferable parameters $\gamma$ and $\rho$ (by factors $2$ and
	$1/2$, respectively), but the proof would become more technical and, if there was no overlap of, say, three or more intervals,
	\eqref{eq:BernsteinSum-2} would have to be replaced by
	\[
		\norm{\chi_\omega f}{L^2(\cE)}^2
		>
		6\Big(\frac{\gamma}{48}\Big)^{\frac{4\log h}{\log 2}+7} \norm{f}{L^2(\cE)}^2
		,
	\]
	see Remark~\ref{rem:overlap} below. However, since we are interested mainly in the general behaviour of the estimate in terms of
	$C_B(m)$ (resp.\ $\lambda$) and the geometric parameters of $\omega$, we opted for the current simpler version of
	Definition~\ref{def:gammarhosampling}, and consequently of estimate \eqref{eq:BernsteinSum-2}.
\end{rem}

\section{Optimality of the main results}\label{sec:optimality}

We now analyse more closely Theorems~\ref{thm:Laplacian} and~\ref{thm:main}. We start with a discussion about the optimality of
the estimate in Theorem~\ref{thm:Laplacian} with respect to the behaviour in terms of $\gamma^{\rho\sqrt{\lambda}}$, which is
essentially already known from the Euclidean setting, see, e.g., \cite[Example~3.1]{EgidiV-20}.

\begin{exa}\label{exa:qualitatively-optimality}
	Let us consider the second derivative on an interval $(0,\ell)$, $\ell < \infty$, with Neumann boundary conditions. These fit
	into our framework of Theorem~\ref{thm:Laplacian} upon taking $\#\mE = 1$, $\cE = [0,\ell]$, and
	$Y = \ell^2(\mE)\times \ell^2(\mE_\internal)\equiv \CC^2$.
	
	Consider for some $\gamma\in (0,4/\pi^2]$ the subset
	$\omega = [\frac{\ell}{4}(1-\gamma), \frac{\ell}{4}(1+\gamma)] \cup [\frac{\ell}{4}(3-\gamma), \frac{\ell}{4}(3+\gamma)] \subset
	\cE$, which, in light of the covering $[0,\ell] = [0,\ell/2] \cup [\ell/2,\ell]$, is $(\gamma,\ell/2)$-sampling in $\cE$.
	We fix some large enough $\lambda>0$ such that $\alpha := \lfloor \frac{\ell\sqrt{\lambda}}{2\pi} \rfloor \geq 2$, and take
	\[
		f(x)
		:=
		\cos^\alpha \Bigl( \frac{2\pi x}{\ell} \Bigr)
		,\quad
		x \in \cE
		.
	\]
	It is easy to see, by Fourier analysis, that $f$ is a linear combination of eigenfunctions for the second derivative associated
	to eigenvalues less then or equal to $\frac{4\pi^2\alpha^2}{\ell^2} \leq \lambda$ and, thus, belongs to
	$\Ran\PP_{-\Delta_{0,Y}}(\lambda) \setminus \{0\}$ with $Y=\CC^2$.
	
	By Jensen's inequality we have
	\[
		\norm{f}{L^2(\cE)}^2
		=
		\int_0^{\ell} \cos^{2\alpha}\Bigl( \frac{2\pi x}{\ell} \Bigr) \,\dd x 
		\geq
		\ell \biggl( \int_0^{\ell} \Bigl\lvert \cos \Bigl( \frac{2\pi x}{\ell} \Bigr) \Bigr\rvert \frac{1}{\ell} \,\dd x \biggr)^{2\alpha} 
		=
		\ell \Bigl( \frac{2}{\pi} \Bigr)^{2\alpha}
		.
	\]
	Using the symmetry of the cosine function, the fact that $\cos(x)\leq -x+\pi/2$ for $x \in [0,\pi/2]$, and a change of variable,
	we also estimate	
	\begin{align*} 
		\norm{\chi_\omega f}{L^2(\cE)}^2
		&=
		\int_{\omega} \cos^{2\alpha} \Bigl( \frac{2\pi x}{\ell} \Bigr) \,\dd x
		=
		2\int_{\frac{\ell}{4}(1-\gamma)}^{\frac{\ell}{4}(1+\gamma)} \cos^{2\alpha}\Bigl( \frac{2\pi x}{\ell} \Bigr) \,\dd x\\
		&=
		\frac{\ell}{\pi}\int_{\frac{\pi}{2}(1-\gamma)}^{\frac{\pi}{2}(1+\gamma)} \cos^{2\alpha}(y) \,\dd y
		=
		\frac{2\ell}{\pi}\int_{\frac{\pi}{2}(1-\gamma)}^{\frac{\pi}{2}} \cos^{2\alpha}(y) \,\dd y\\
		&\leq
		\frac{2\ell}{\pi}\int_{\frac{\pi}{2}(1-\gamma)}^{\frac{\pi}{2}} \Bigl( -y +\frac{\pi}{2} \Bigr)^{2\alpha} \,\dd y
		=
		\frac{2\ell}{\pi(2\alpha+1)}\Bigl( \frac{\pi \gamma}{2} \Bigr)^{2\alpha +1}
		. 
	\end{align*} 
	In light of $\alpha\geq 2$, $2\alpha +1 = 2 \lfloor \ell\sqrt{\lambda}/(2\pi) \rfloor +1 \geq \ell\sqrt{\lambda}/\pi -1$, and
	$\gamma \leq 4/\pi^2$, we conclude that
	\[
		\frac{\norm{\chi_\omega f}{L^2(\cE)}^2}{\norm{f}{L^2(\cE)}^2}
		\leq
		\frac{1}{\ell}\Bigl(\frac{\pi}{2}\Bigr)^{2\alpha} \frac{2\ell}{\pi(2\alpha+1)} \Bigl(\frac{\pi \gamma}{2}\Bigr)^{2\alpha +1}
		\leq
		\frac{1}{5} \Bigl( \frac{\pi^2\gamma}{4} \Bigr)^{2\alpha+1}
		\leq
		\frac{1}{5} \Bigl( \frac{\pi^2}{4}\gamma \Bigr)^{\frac{\ell\sqrt{\lambda}}{\pi}-1}
		,
	\]
	which is consistent with the lower bound obtained in Theorem~\ref{thm:Laplacian} with respect to the behaviour of the form
	$\gamma^{\rho\sqrt{\lambda}}$. Thus, asymptotically speaking, one cannot expect in Theorem~\ref{thm:Laplacian} an estimate with
	qualitatively better behaviour.
\end{exa}

Although Example~\ref{exa:qualitatively-optimality} shows that the estimate from Theorem~\ref{thm:Laplacian} is (qualitatively)
optimal, the estimate does not perform well in certain situations, as the examples below show.

\begin{exa}
	\begin{enumerate}[(1)]
		
		\item
		Consider again the set $\omega = \cE$, which is $(1,\rho)$-sampling in $\cE$ for all $\rho > 0$, see item (1) of
		Example~\ref{exa:basic}. Then, for every $A\in L^1_\loc(\cE)$ and every closed subspace $Y$ of
		$\ell^2(\mE)\times \ell^2(\mE_\internal)$, taking the limit as $\rho \to 0$ in Theorem~\ref{thm:Laplacian} gives
		\begin{equation}\label{eq:eigenfunctions-exa}
			\frac{\norm{\chi_\omega f}{L^2(\cE)}^2}{\norm{f}{L^2(\cE)}^2}
			\geq
			12\Big(\frac{1}{48}\Big)^5 
			\approx
			4.7\cdot 10^{-8}
			,\quad
			f \in \Ran\PP_{-\Delta_{A,Y}}(\lambda) \setminus \{0\},\ \lambda \geq 0
			.
		\end{equation}	
		Since, on the other hand, $\norm{\chi_\omega f}{L^2(\cE)}^2 / \norm{f}{L^2(\cE)}^2 = 1$, this indicates that the estimate does
		not perform well numerically, at least if $\omega$ is ``large'' in $\cE$.
		
		\item
		Consider on $(0,\pi)$ the ($L^2$-normalised) eigenfunction $f(x)=\sqrt{\frac{2}{\pi}}\cos(kx)$ for the second derivative with
		Neumann boundary conditions associated to the eigenvalue $\lambda = k^2$ with $k \in \NN$. Moreover, with $\#\mE = 1$ and
		$\cE = [0,\pi]$, let $\omega = ( \pi(1-\gamma)/2, \pi(1+\gamma)/2 ) \subset \cE$ for some $\gamma \in (0,1)$, which in light of
		$[0,\pi] = [0,\pi/2] \cup [\pi/2,\pi]$ is $(\gamma, \pi/2)$-sampling in $\cE$. An elementary calculation then shows that
		\begin{align*}
			\frac{\norm{\chi_\omega f}{L^2(\cE)}^2}{\norm{f}{L^2(\cE)}^2}
			&=
			\norm{\chi_\omega f}{L^2(\cE)}^2
			=
			\frac{2}{\pi}\int_\omega \cos^2(kx) \,\dd x
			=
			\frac{4}{\pi}\int_{\frac{\pi}{2}(1-\gamma)}^{\frac{\pi}{2}} \cos^2(kx) \,\dd x\\
			&=
			\frac{\pi\gamma}{4} + \frac{2\cos(\frac{k\pi( \gamma-1)}{2})\sin(\frac{k\pi( \gamma-1)}{2}) +
				2\cos(\frac{k\pi}{2})\sin(\frac{k\pi}{2})}{4k}
			.
		\end{align*}
		The latter converges to $\pi\gamma/4 > 0$ as $k \to \infty$, whereas the lower bound provided by Theorem~\ref{thm:Laplacian}
		converges to zero exponentially as $k \to \infty$.
		
	\end{enumerate}
\end{exa}

\section{Applications}\label{sec:appllications-semig}

In this section we discuss two applications of our main results: a bound for the trace of the semigroup
$(\ee^{t\Delta_{A,Y}})_{t\geq 0}$ and observability/null-controllability of
the heat equation on $L^2(\cE)$.

\subsection{A bound for the trace}

Recall that the heat semigroup generated by all realisations of the magnetic Laplacian whose form domain is a closed subspace of
$W_A(\cE)$ is of trace class whenever the set $\mE$ is finite with $\mE_\external=\emptyset$. Indeed, due to analyticity of the
semigroup, the exponential $\ee^{\frac{t}{2}\Delta_{A,Y}}$ is for each $t > 0$ bounded as an operator from $L^2(\cE)$ to
$W_A(\cE)$, see, e.g., \cite[Proposition 7.3.4]{Arendt-ISEM}. In turn, due to the ideal property of Hilbert--Schmidt operators,
and because the embedding of $W_A(\cE)$ into $L^2(\cE)$ is Hilbert--Schmidt (see Lemma~\ref{lem:magneticSobolev} below), the
exponential $\ee^{\frac{t}{2}\Delta_{A,Y}}$ is for each $t>0$ likewise Hilbert--Schmidt. We therefore conclude that
$\ee^{t\Delta_{A,Y}}=\ee^{\frac{t}{2}\Delta_{A,Y}}\ee^{\frac{t}{2}\Delta_{A,Y}}$ is of trace class for all $t>0$.

The following can now be checked directly considering the estimate~\eqref{eq:eigenfunctions} for each eigenpair and plugging all
these estimates in the exponential series.

\begin{cor}\label{cor:trace}
	Under the assumptions of Theorem~\ref{thm:Laplacian},  let $\mE$ be finite with $\mE_\external=\emptyset$. Then, for
	every $(\gamma,\rho)$-sampling set $\omega\subset \fE$, the trace of $\e^{t\Delta_{A,Y}}$ satisfies
	\[
	\Tr(\ee^{t\Delta_{A,Y}})
	\leq
	\frac{1}{12} \Bigl( \frac{48}{\gamma} \Bigr)^5 \sum_{k\in\NN} \ee^{-\lambda_k + \frac{40\rho\sqrt{\lambda_k}}{\log2}
		\log(48/\gamma)}
	\norm{\chi_\omega f_k}{L^2(\Graph)}^2
	,
	\]
	where $(f_k)_{k\in \NN}$ is an orthonormal basis of eigenfunctions of $-\Delta_{A,Y}$ and $(\lambda_k)_{k\in \NN}$ the sequence
	of corresponding eigenvalues.
\end{cor}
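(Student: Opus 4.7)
The discussion immediately preceding the corollary already establishes that $\ee^{t\Delta_{A,Y}}$ is of trace class for every $t>0$, being the composition of two Hilbert--Schmidt operators. Since $(f_k)_{k\in\NN}$ is an orthonormal basis of $L^2(\cE)$ consisting of eigenfunctions of $-\Delta_{A,Y}$ with eigenvalues $\lambda_k \geq 0$, the trace is given spectrally by
\[
	\Tr(\ee^{t\Delta_{A,Y}})
	=
	\sum_{k\in\NN} \langle \ee^{t\Delta_{A,Y}} f_k, f_k \rangle
	=
	\sum_{k\in\NN} \ee^{-t\lambda_k}
	=
	\sum_{k\in\NN} \ee^{-t\lambda_k} \norm{f_k}{L^2(\cE)}^2.
\]
(The prefactor $\ee^{-t\lambda_k}$ on the right-hand side of the stated bound appears to read $\ee^{-\lambda_k}$ in the corollary, which is a minor typographical discrepancy; the argument below produces the bound with the correct $t$ in the exponent, and the stated version corresponds to $t=1$.)

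The core step is to apply Theorem~\ref{thm:Laplacian} separately to each normalised eigenfunction $f_k$. Since $f_k \in \Ran\PP_{-\Delta_{A,Y}}(\lambda_k)\setminus\{0\}$, inequality~\eqref{eq:eigenfunctions} with $\lambda = \lambda_k$ and $\norm{f_k}{L^2(\cE)} = 1$ gives, after splitting the exponent $\tfrac{40\rho\sqrt{\lambda_k}}{\log 2}+5$ into its constant and $\sqrt{\lambda_k}$-dependent parts and rewriting the latter as the exponential of a logarithm,
\[
	1
	=
	\norm{f_k}{L^2(\cE)}^2
	<
	\frac{1}{12} \Bigl(\frac{48}{\gamma}\Bigr)^{5} \exp\Bigl(\frac{40\rho\sqrt{\lambda_k}}{\log 2}\log(48/\gamma)\Bigr) \norm{\chi_\omega f_k}{L^2(\cE)}^2.
\]

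To conclude, I would multiply the previous inequality by $\ee^{-t\lambda_k}$ and sum over $k \in \NN$: the left-hand side reassembles into $\Tr(\ee^{t\Delta_{A,Y}})$ as computed in the first paragraph, while on the right-hand side the $k$-independent factor $\tfrac{1}{12}(48/\gamma)^5$ can be pulled out of the non-negative sum. Finiteness (and hence termwise summation being well-defined) is provided by the trace-class property combined with the fact that $\ee^{-t\lambda_k}$ dominates asymptotically the subexponential correction $\ee^{(40\rho/\log 2)\sqrt{\lambda_k}\log(48/\gamma)}$ as $k\to\infty$. The strict inequality at each index relaxes to the $\leq$ upon summation. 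There is no genuine obstacle here: the corollary is a direct spectral-theoretic repackaging of the one-function estimate of Theorem~\ref{thm:Laplacian}, exactly as hinted at in the remark preceding the statement.
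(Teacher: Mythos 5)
Your proof is correct and follows exactly the approach the paper indicates: the paper merely asserts that the corollary ``can now be checked directly considering the estimate \eqref{eq:eigenfunctions} for each eigenpair and plugging all these estimates in the exponential series,'' which is precisely what you do. You also rightly flag the apparent typographical omission of $t$ in the exponent $\ee^{-\lambda_k+\cdots}$ on the right-hand side of the stated bound; your discussion of the finiteness of the resulting series is harmless but not actually needed, since if the right-hand side were infinite the inequality would hold trivially.
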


\subsection{Observability and controllability of the heat equation}

Combining the (quite general) Theorem~2.8 in \cite{NakicTTV-20} with our Theorem~\ref{thm:Laplacian} we are able to obtain the
following $L^2$-observability estimate for the semigroup $(\ee^{t\Delta_{A,Y}})_{t\geq 0}$ generated by $\Delta_{A,Y}$.

\begin{cor}\label{cor:semigroup-L2-estimate}
	Let $\cE$ and $Y \subset \ell^2(\mE) \oplus \ell^2(\mE_\internal)$ be as in Theorem~\ref{thm:Laplacian}, and let
	$\omega\subset \cE$ be $(\gamma,\rho)$-sampling. Then, for every $T>0$ and $g\in L^2(\cE)$ we have 
	\begin{equation}\label{eq:semigroup-L2-estimate}
		\norm{\ee^{T\Delta_{A,Y}}g}{L^2(\cE)}^2
		\leq 
		C(\omega, T)^2 \int_0^T \norm{\chi_\omega \ee^{t\Delta_{A,Y}}g}{L^2(\cE)}^2 \,\dd t		
	\end{equation}
	with
	\begin{equation}\label{eq:Cobs}
		C(\omega, T)^2 \leq \frac{K_1 \gamma^{-K_2}}{T}\exp\Big(\frac{K_3 \rho^2 \log^2(K_4/\gamma)}{T}\Big),
	\end{equation}
	where $K_j>0$, $j = 1,\ldots, 4$, are universal constants.
\end{cor}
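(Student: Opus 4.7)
The plan is to fit our situation into the abstract observability framework of \cite[Theorem~2.8]{NakicTTV-20}, which derives an observability estimate for a semigroup $(\ee^{-tH})_{t\ge 0}$ from two hypotheses: (i) a uniform spectral (Logvinenko--Sereda-type) inequality of the form
\[
	\norm{\chi_\omega f}{L^2}^2 \ge d_0 \ee^{-d_1 \lambda^{s}} \norm{f}{L^2}^2
	\quad\text{for all }f\in\Ran\PP_H(\lambda),\ \lambda\ge 0,
\]
with some $s\in(0,1)$, and (ii) a dissipation estimate for the high-frequency part of the semigroup. The output is precisely an estimate of type \eqref{eq:semigroup-L2-estimate} whose constant $C(\omega,T)^2$ has the form $(K/T)\exp(K'd_1^{1/(1-s)}/T^{s/(1-s)})$ up to universal factors.

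First, I would rewrite the conclusion of Theorem~\ref{thm:Laplacian} in the required exponential form. Since $\gamma/48\in(0,1)$, one has $\log(\gamma/48)=-\log(48/\gamma)<0$, and hence
\[
	12\Bigl(\frac{\gamma}{48}\Bigr)^{\frac{40\rho\sqrt{\lambda}}{\log 2}+5}
	=
	12\Bigl(\frac{\gamma}{48}\Bigr)^{5} \exp\!\Bigl(-\frac{40\rho\log(48/\gamma)}{\log 2}\sqrt{\lambda}\Bigr).
\]
Hence hypothesis (i) is satisfied for $H=-\Delta_{A,Y}$ with $s=1/2$, with $d_0=12(\gamma/48)^5$, and with $d_1=40\rho\log(48/\gamma)/\log 2$. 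Next, since $\Delta_{A,Y}$ is self-adjoint and non-positive, the functional calculus immediately yields the dissipation estimate $\norm{(I-\PP_{-\Delta_{A,Y}}(\lambda))\ee^{t\Delta_{A,Y}}g}{L^2(\cE)}\le \ee^{-t\lambda}\norm{g}{L^2(\cE)}$ for every $\lambda\ge 0$, $t\ge 0$, $g\in L^2(\cE)$, which is (ii) with the optimal coefficients.

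Having verified both hypotheses uniformly in $\lambda$, I would invoke \cite[Theorem~2.8]{NakicTTV-20} directly. With $s=1/2$, the abstract constant reduces, after the optimisation over $\lambda$ built into the proof there, to $C(\omega,T)^2\le (K d_0^{-1}/T)\exp(K' d_1^2/T)$ for universal constants $K,K'$. Substituting back the explicit values of $d_0$ and $d_1$ gives a constant of the form
\[
	\frac{K_1\gamma^{-K_2}}{T}\exp\!\Bigl(\frac{K_3 \rho^2 \log^2(K_4/\gamma)}{T}\Bigr)
\]
with universal $K_1,\dots,K_4>0$, which is exactly \eqref{eq:Cobs}. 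The only bookkeeping step is to absorb the factor $12\cdot 48^{-5}$ from $d_0$ into $K_1$ and to absorb $(40/\log 2)^2$ from $d_1^2$ into $K_3$; no sharpness issue arises because the claimed bound is stated only up to universal constants.

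The only substantive obstacle is to make sure that the abstract theorem is applied in the right normalisation -- concretely, that our spectral exponent $s=1/2$ is indeed the admissible one, and that the constants $d_0$, $d_1$ depending only on $\gamma,\rho$ (and not on the operator or on $\lambda$) are treated as allowed inputs. Both are standard for \cite{NakicTTV-20}, so once the two hypotheses above are verified the result is a direct consequence; the remainder of the proof is simply the bookkeeping of constants outlined in the previous paragraph.
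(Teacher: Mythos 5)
Your proposal matches the paper's argument: the paper likewise rewrites Theorem~\ref{thm:Laplacian} in the form $\norm{\PP_{-\Delta_{A,Y}}(\lambda)f}{L^2(\cE)}^2 \leq d_0\,\ee^{d_1\sqrt{\lambda}}\norm{\chi_\omega\PP_{-\Delta_{A,Y}}(\lambda)f}{L^2(\cE)}^2$ with $d_0=\tfrac{1}{12}(48/\gamma)^5$ and $d_1=\tfrac{40\rho}{\log 2}\log(48/\gamma)$ (the reciprocal of your $d_0$, consistent with your use of $d_0^{-1}$), invokes \cite[Theorem~2.8]{NakicTTV-20}, and absorbs the resulting polynomial-in-$d_0$ prefactor into $\gamma^{-K_2}$. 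Your extra step of explicitly verifying the dissipation hypothesis from the functional calculus is correct but left implicit in the paper.
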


\begin{proof}
	We may rewrite \eqref{eq:eigenfunctions} as
	\[
		\norm{ \PP_{-\Delta_{A,Y}}(\lambda)f }{L^2(\cE)}^2
		\leq
		d_0\ee^{d_1\sqrt{\lambda}} \norm{ \chi_\omega \PP_{-\Delta_{A,Y}}(\lambda)f }{L^2(\cE)}^2
		,\quad
		f \in L^2(\cE)
		,\
		\lambda \geq 0
		,
	\]
	with
	\[
		d_0
		=
		\frac{1}{12}\Bigl(\frac{48}{\gamma}\Bigr)^5
		\quad\text{ and }\quad
		d_1
		=
		\frac{40\rho}{\log 2}\log\Bigl(\frac{48}{\gamma}\Bigr)
		.
	\]
	We therefore deduce from \cite[Theorem~2.8]{NakicTTV-20} that for all $T > 0$ and $g \in L^2(\cE)$ we have
	\[
		\norm{\ee^{T\Delta_{A,Y}}g}{L^2(\cE)}^2
		\leq 
		C(\omega, T)^2 \int_0^T \norm{\chi_\omega \ee^{t\Delta_{A,Y}}g}{L^2(\cE)}^2 \dd t
	\]
	with
	\[
		C(\omega, T)^2
		=
		\frac{C_1 d_0}{T}(2 d_0 +1)^{C_2}\exp\Big(\frac{C_3 d_1^2}{T}\Big)
		,
	\]
	where $C_1, C_2, C_3>0$ are universal constants. Substituting the values for $d_0$ and $d_1$, we see that $C(\omega, T)^2$ can
	be bounded as in \eqref{eq:Cobs}.
\end{proof}

A classical duality argument, see for example \cite{TW09}, shows that the above $L^2$-observability estimate is equivalent to
null-controllability in every positive time $T>0$ of the system 
\begin{equation}\label{eq:parabolic-system}
	\begin{cases}
		\partial_t u(t,\cdot) - \Delta_{A,Y} u(t,\cdot)
		=
		h(t,\cdot)\chi_\omega , & t\in (0,T),\\
		u(0,\cdot)
		=
		u_0 \in L^2(\cE)
		.
	\end{cases}
\end{equation}
This means that for every initial datum $u_0 \in L^2(\cE)$ and for every positive time $T > 0$ there exists a control function
$h$ driving the (mild) solution of the system \eqref{eq:parabolic-system} to zero in time $T$, that is, $u(T,\cdot) = 0$. We
therefore obtain the following result:

\begin{cor}\label{cor:null-controllability}
	Let $\cE$ and $Y \subset \ell^2(\mE) \oplus \ell^2(\mE_\internal)$ be as in Theorem~\ref{thm:Laplacian},
	and let $\omega\subset \cE$ be $(\gamma,\rho)$-sampling. Then \eqref{eq:parabolic-system} is null-controllable in every time
	$T>0$, and it holds
	\begin{multline}
		C_T
		:=
		\sup_{\norm{u_0}{L^2(\cE)}=1} \inf\{\norm{h\chi_\omega}{L^2((0,T)\times\cE)} \mid
		\text{ the solution $u$ of \eqref{eq:parabolic-system} with RHS } h\chi_\omega\\ \text{satisfies $u(T,\cdot)\equiv 0$}\}
		\leq
		C(\omega,T)
		,
	\end{multline}
	where $C(\omega, T)$ is the constant in Corollary~\ref{cor:semigroup-L2-estimate}.
\end{cor}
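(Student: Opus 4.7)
The plan is to invoke the classical HUM duality between final-state observability and $L^2$-null-controllability for linear parabolic systems, as presented in, e.g., \cite{TW09}. Since $\Delta_{A,Y}$ is self-adjoint on $L^2(\cE)$, the adjoint semigroup coincides with $(\ee^{t\Delta_{A,Y}})_{t\geq 0}$ itself. The control operator is the bounded (and self-adjoint) multiplication $B \colon L^2(\cE) \to L^2(\cE)$, $h \mapsto h\chi_\omega$, so that $B^\ast = B$. In this framework, the estimate \eqref{eq:semigroup-L2-estimate} from Corollary~\ref{cor:semigroup-L2-estimate} is nothing but the final-state observability inequality
\[
	\norm{\ee^{T\Delta_{A,Y}}g}{L^2(\cE)}^2
	\leq
	C(\omega,T)^2 \int_0^T \norm{B^\ast \ee^{t\Delta_{A,Y}}g}{L^2(\cE)}^2 \,\dd t
\]
valid for every $g\in L^2(\cE)$.

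From here I would appeal directly to the standard duality theorem (see, e.g., \cite{TW09}) to conclude that \eqref{eq:parabolic-system} is null-controllable in every time $T > 0$ and that the cost of control is bounded by the observability constant $C(\omega,T)$. Concretely, for each initial datum $u_0 \in L^2(\cE)$ the optimal control is recovered by minimising the strictly convex quadratic functional
\[
	J(g)
	=
	\frac{1}{2}\int_0^T \norm{\chi_\omega \ee^{t\Delta_{A,Y}}g}{L^2(\cE)}^2 \,\dd t + \langle u_0, \ee^{T\Delta_{A,Y}}g\rangle_{L^2(\cE)}
\]
over $g \in L^2(\cE)$, whose coercivity is furnished precisely by \eqref{eq:semigroup-L2-estimate}. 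The unique minimiser $g^\ast$ then produces the control $h(t,\cdot) := \chi_\omega \ee^{(T-t)\Delta_{A,Y}}g^\ast$, whose $L^2((0,T)\times \cE)$-norm is bounded by $C(\omega,T)\norm{u_0}{L^2(\cE)}$, yielding the asserted upper bound on $C_T$.

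The only matter requiring attention is checking that the abstract hypotheses of the duality theorem in \cite{TW09} are met: that $\Delta_{A,Y}$ generates a $C_0$-semigroup of contractions on $L^2(\cE)$ (which follows from the self-adjointness and non-positivity of $\Delta_{A,Y}$ recalled in Section~\ref{ssec:framework}) and that $B = \chi_\omega$ is an admissible control operator (which is immediate because $B$ is bounded on $L^2(\cE)$). Beyond these routine verifications, no genuine obstacle arises; in particular, since $B$ is bounded, one need not worry about the subtleties of admissibility for unbounded control operators.
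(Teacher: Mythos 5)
Your proposal invokes exactly the same classical HUM duality argument that the paper uses (the paper simply cites \cite{TW09} and states the equivalence between the observability estimate of Corollary~\ref{cor:semigroup-L2-estimate} and null-controllability with the same cost bound). Your additional details about the minimisation functional $J$ and the recovery of the optimal control are a correct and standard unpacking of that duality, so the argument is correct and in the spirit of the paper.
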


\begin{rem}\label{rem:nothing-changes-p}
\begin{enumerate}[(1)]

	\item
	By means of the unitary gauge transformation $\gauge_A$ in $L^2(\cE)$ and the corresponding unitary transformation
	$\gaugeBoundary_A$ on the boundary space $\ell^2(\mE) \oplus \ell^2(\mE_\internal)$ that maps the boundary conditions
	accordingly, it is easy to check that $D_{A,Y} = \gauge_A^* D_{0,\gaugeBoundary_A Y} \gauge_A$; we refer the reader to
	Section~\ref{sec:proof-of-main} and the appendix for details on the definition of $\gauge_A$ and $\gaugeBoundary_A$. In turn,
	the magnetic Laplacian $\Delta_{A,Y}$ is unitarily equivalent to the free Laplacian $\Delta_{0,\gaugeBoundary_A Y}$ with
	boundary conditions corresponding to $\gaugeBoundary_A Y \subset \ell^2(\mE) \oplus \ell^2(\mE_\internal)$ via
	$\Delta_{A,Y} = \gauge_A^* \Delta_{0,\gaugeBoundary_AY} \gauge_A$.

	\item
	If $A$ satisfies
	\[
		A_\me \in L^{p_\me}((0,\ell_\me))
		,\quad
		p_\me \in [2,\infty]
		,\
		\me \in \mE
		,
	\]
	such that
	\[
		\sup_{\me \in \mE} \norm{f_\me}{L^{p_\me}((0,\ell_\me))}
		<
		\infty
		,
	\]
	which is, in particular, automatically fulfilled if $A \in L^p(\cE)$ for some $p \in [2,\infty]$, then one can show that
	$W_A(\cE)$ agrees with $W^{1,2}(\cE)$. In this case, the form domains of our corresponding magnetic Laplacians do not depend on
	$A$.

	\item
	The case of part (1) is particularly interesting in the context of a diamagnetic inequality: Just as in the case of magnetic
	Schrödinger operators in the Euclidean setting, the standard diamagnetic inequality together with the abstract criterion in
	\cite[Theorem~2.21]{Ouh05} implies that $(\ee^{t\Delta_{A,Y}})_{t\ge 0}$ is dominated by
	$(\ee^{t\Delta_{0,\gaugeBoundary_A Y}})_{t\geq 0}$ (i.e.,
	$\abs{\ee^{t\Delta_{A,Y}}f} \leq \ee^{t\Delta_{0,\gaugeBoundary_A Y}}\abs{f}$ for all $t\geq 0$ and all
	$f\in L^2(\cE)$) as soon as $(\ee^{t\Delta_{0,\gaugeBoundary_A Y}})_{t\geq 0}$ is positive. By \cite[Theorem~6.85]{Mug14}, this
	is the case if and only if the orthogonal projector of $\bdSp$ onto $\gaugeBoundary_A Y$ is a positivity preserving operator. 
	
	In particular, under the assumptions -- and with the notations -- of Corollary~\ref{cor:semigroup-L2-estimate} we can then
	deduce the estimate
	\begin{equation}\label{eq:semigroup-L2-estimate-2}
		\norm{\ee^{T\Delta_{A,V_A^\ast Y}}f}{L^2(\cE)}^2
		\leq 
		C(\omega, T)^2 \int_0^T \norm{\chi_\omega \ee^{t\Delta_{0,Y}}f}{L^2(\cE)}^2 \dd t
		.
	\end{equation}
	Namely, the semigroup associated to $\Delta_{A,V_A^*Y}$ can be observed at time $T$ by knowledge of the semigroup associated to
	the free Laplacian with boundary condition $Y$ restricted to a $(\gamma,\rho)$-sampling set being a control set for the system
	\eqref{eq:parabolic-system} with $A=0$.
	
\end{enumerate}
\end{rem}

\section{Proof of Theorem~\ref{thm:main}}\label{sec:main_proof}

Recall that $f \in W^{\infty,2}(\cE)$ satisfies a Bernstein-type inequality with respect to $C_B \colon \NN_0 \to [0,\infty)$ such
that \eqref{eq:BernsteinSum} holds for some $\rho > 0$, and let $\omega \subset \cE$ be $(\gamma,\rho)$-sampling.

We begin with reducing the situation of Theorem~\ref{thm:main} to a slightly more specialised one in several aspects:

Firstly, we may assume that each $C_B(m)$ is \emph{strictly positive}. Indeed, by taking $C_B(m) + \eps$, $\eps > 0$, instead of
$C_B(m)$ we may replace $h$ in Theorem~\ref{thm:main} by the sum $h + \eps^{1/2}\exp(10\rho)$ and consider the limit as
$\eps \to 0$ in the final estimate.

Secondly, we observe that \eqref{eq:bernstein} (and, thus, also \eqref{eq:BernsteinSum}) is invariant under subdivision of edges.
This means that we may decompose each $I_\me$ into the adjacent intervals given in terms of the sampling hypothesis on
$\omega_\me$ in $I_\me$, while \eqref{eq:bernstein} and \eqref{eq:BernsteinSum} remain valid. Without loss of generality, we may
therefore assume that
\begin{equation}\label{eq:edgeUpper}
	\lupper
	:=
	\sup_{\me \in \mE} \ell_\me
	\leq
	\rho
	,
\end{equation}
while, at the same time, the $(\gamma,\rho)$-sampling set $\omega$ satisfies
\begin{equation}\label{eq:sampling}
	\abs{\omega_\me}
	\geq
	\gamma\ell_\me
	\quad\text{ for all }\
	\me \in \mE
	.
\end{equation}

With the above reductions, we now classify the edges in $\mE$ into good and bad ones. This is analogous to the approach by
Kovrijkine~\cite{Kov01,Kovrijkine-thesis} in the Euclidean setting and serves the purpose of localizing the Bernstein-type
inequality on good edges.

\begin{defin}\label{def:good-edges}
An edge $\me \in \mE$ is called a \emph{good edge} (with respect to the function $f$ and $C_B \colon \NN_0 \to [0,\infty)$) if
\begin{equation}\label{eq:good-interval}
	\norm{f_\me^{(m)}}{L^2((0,\ell_\me))}^2
	\leq
	2^{m+1} C_B(m)\norm{f_\me}{L^2((0,\ell_\me))}^2
	\quad\text{ for all }\quad
	m \in \NN
	.
\end{equation}
An edge $\me \in \mE$ is called \emph{bad} if it is not good, that is, if there exists $m \in \NN$ such that we have
$\norm{f_\me^{(m)}}{L^2((0,\ell_\me))}^2 > 2^{m+1} C_B(m)\norm{f_\me}{L^2((0,\ell_\me))}^2$. 
\end{defin}

Under the current additional assumption that each $C_B(m)$ is strictly positive, this definition yields
\begin{equation}\label{eq:bad-mass}
	\sum_{\me \colon \me \text{ bad}} \norm{f_\me}{L^2((0,\ell_\me))}^2
	<
	\frac{1}{2} \norm{f}{L^2(\cE)}^2
	.
\end{equation}
Indeed, using the Bernstein-type inequality for $f$, we obtain
\begin{align*}
	\sum_{\me \colon \me\text{ bad}} \norm{f_\me}{L^2((0,\ell_\me))}^2
	&<
	\sum_{\me \colon \me\text{ bad}} \sum_{m\in\NN} \frac{1}{2^{m+1} C_B(m)} \norm{f_\me^{(m)}}{L^2((0,\ell_\me))}^2\\
	&\leq
	\sum_{m\in\NN} \frac{1}{ 2^{m+1} C_B(m)} \norm{f^{(m)}}{L^2(\cE)}^2\\
	&\leq
	\norm{f}{L^2(\cE)}^2 \sum_{m\in\NN} 2^{-m-1}\\
	&=
	\frac{1}{2}\norm{f}{L^2(\cE)}^2
	,
\end{align*}
which proves \eqref{eq:bad-mass}. In particular, good edges exist and we have
\begin{equation}\label{eq:good-mass}
	\norm{f}{L^2(\cE)}^2
	<
	2 \sum_{\me \colon \me \text{ good}} \norm{f_\me}{L^2((0,\ell_\me))}^2.
\end{equation}

\subsection{Analyticity on good edges}

In light of \eqref{eq:good-interval} and \eqref{eq:BernsteinSum}, the following result is an immediate consequence of the one
dimensional case of \cite[Lemma~3.2]{EgiSee21}. For the sake of self-containedness, we reproduce the proof for this particular
case below.

\begin{lemma}\label{lem:analyticity}
	For each good edge $\me\in\mE$, the function $f_{\me}$ is analytic in $(0,\ell_\me)$.
\end{lemma}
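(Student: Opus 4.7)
The plan is to derive from the Bernstein-type hypothesis a uniform factorial bound on the derivatives of $f_\me$ on $(0,\ell_\me)$, and then invoke the classical Taylor-remainder criterion for real-analyticity.

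First I would use the summability in \eqref{eq:BernsteinSum} to deduce that its individual terms are bounded, so there exists $K > 0$ with
\[
C_B(m)^{1/2} \leq K\,\frac{m!}{(10\rho)^m}, \qquad m \in \NN_0.
\]
Combined with the good-edge inequality \eqref{eq:good-interval}, this yields
\[
\norm{f_\me^{(m)}}{L^2((0,\ell_\me))} \leq K\sqrt{2}\,\norm{f_\me}{L^2((0,\ell_\me))}\,\frac{m!}{(10\rho/\sqrt{2})^m}, \qquad m \in \NN_0.
\]

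Next I would promote this $L^2$-bound to a pointwise bound via a one-dimensional Sobolev-type argument: writing
\[
f_\me^{(m)}(x) = \frac{1}{\ell_\me}\int_0^{\ell_\me}\Bigl(f_\me^{(m)}(y) + \int_y^x f_\me^{(m+1)}(t)\,\dd t\Bigr)\dd y
\]
and applying Cauchy--Schwarz in both integrals gives, uniformly in $x \in [0,\ell_\me]$,
\[
\abs{f_\me^{(m)}(x)} \leq \ell_\me^{-1/2}\norm{f_\me^{(m)}}{L^2((0,\ell_\me))} + \ell_\me^{1/2}\norm{f_\me^{(m+1)}}{L^2((0,\ell_\me))}.
\]
Combining with the previous step and setting $r := 10\rho/\sqrt{2}$, I obtain a bound of the form
\[
\sup_{x \in [0,\ell_\me]} \abs{f_\me^{(m)}(x)} \leq C_\me\,\frac{m!}{r^m}, \qquad m \in \NN_0,
\]
with an edge-dependent constant $C_\me > 0$ that absorbs the $\ell_\me$-dependent prefactors and the cost of the index-shift $m \mapsto m+1$.

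Finally, such a pointwise bound implies analyticity in the standard way: for every $x_0 \in (0,\ell_\me)$ and every $x$ with $\abs{x-x_0} < r$, Taylor's remainder estimate gives
\[
\Bigl\lvert f_\me(x) - \sum_{k=0}^{m-1}\frac{f_\me^{(k)}(x_0)}{k!}(x-x_0)^k\Bigr\rvert \leq C_\me\Bigl(\frac{\abs{x-x_0}}{r}\Bigr)^m \longrightarrow 0
\]
as $m \to \infty$, proving real-analyticity on $(0,\ell_\me)$. The only obstacle I foresee is that after the edge subdivision carried out at the beginning of the section there is no longer a uniform lower bound on $\ell_\me$, so the factor $\ell_\me^{-1/2}$ in the Sobolev step can be large; however, since only qualitative analyticity on each fixed edge is needed here, this merely inflates the edge-dependent constant $C_\me$ while leaving the decay rate $r$, which determines the radius of convergence, unaffected.
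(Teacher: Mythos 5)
Your approach follows essentially the same path as the paper's: use the good-edge inequality together with the summability hypothesis to obtain factorial-type $L^2$-bounds on $f_\me^{(m)}$, upgrade these to pointwise bounds via a one-dimensional Sobolev argument, and conclude real-analyticity from the Taylor remainder. The cosmetic differences are that the paper applies an abstract Sobolev embedding $W^{1,2}(J)\hookrightarrow L^\infty(J)$ on a small subinterval $J$ around a fixed point and keeps the full sum $\sum_{k} C_B(k)^{1/2}(\sqrt{2}r)^k/k!$ intact until the end, while you work on the whole edge with an explicit integral identity and first extract the termwise bound $C_B(m)^{1/2}\leq h\,m!/(10\rho)^m$.

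One technical slip should be flagged. You claim the cost of the index shift $m\mapsto m+1$ in the Sobolev step can be absorbed into the edge-dependent constant $C_\me$, so that $\sup_x\abs{f_\me^{(m)}(x)}\leq C_\me\,m!/r^m$. That is not correct: the second Sobolev term contributes $\ell_\me^{1/2}\norm{f_\me^{(m+1)}}{L^2((0,\ell_\me))}$, which by the previous step is of order $\ell_\me^{1/2}(m+1)!/r^{m+1}$, and the resulting factor $(m+1)/r$ grows with $m$ and cannot be hidden in a constant. The honest pointwise estimate is $\sup_x\abs{f_\me^{(m)}(x)}\leq C_\me\,(m+1)!/r^m$ (equivalently $C_\me'\,m!/(r')^m$ for any $r'<r$). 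Your conclusion is unharmed -- the Taylor remainder is then of order $(m+1)(\abs{x-x_0}/r)^m$, which still vanishes for $\abs{x-x_0}<r$ -- but the intermediate claim as written is imprecise, and the paper's proof deliberately keeps the factor $(m+1)!/r^{m+1}$ visible for exactly this reason.
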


\begin{proof}
	Let $\me$ be a good edge, and set $r := \min\{ 1 , 10\rho \} / \sqrt{2}$. Let $y \in (0,\ell_\me)$, and let
	$0 < \varepsilon < r$ be such that the closure of $J := (y - \varepsilon , y + \varepsilon)$ belongs to $(0,\ell_\me)$. By
	Sobolev embedding, there is a constant $c = c(\eps) > 0$ such that
	\[
		\norm{g}{L^\infty(J)}
		\leq
		c \norm{g}{W^{1,2}(J)}
		\quad\text{ for all }\quad
		g \in W^{1,2}(J)
		.
	\]
	We apply this to $g = f_{\me}^{(m)}|_J$ for each $m \in \NN_0$. We then obtain by \eqref{eq:good-interval} that
	\begin{align*}
		\norm{f_{\me}^{(m)}}{L^\infty(J)}^2
		&\leq
		c^2 \norm{f_{\me}^{(m)}}{W^{1,2}(J)}^2
			\leq
			c^2 \norm{f_{\me}^{(m)}}{W^{1,2}((0,\ell_\me))}^2\\
		&=
		c^2 \bigl( \norm{f_{\me}^{(m)}}{L^2((0,\ell_\me))}^2 + \norm{f_{\me}^{(m+1)}}{L^2((0,\ell_\me))}^2 \bigr)\\
		&\leq
		c^2 \norm{f_{\me}}{L^2((0,\ell_\me))}^2 \bigl( 2^{m+1}C_B(m) + 2^{m+2}C_B(m+1) \bigr)
		.
	\end{align*}
	Taking into account that $\sqrt{2}r \leq 1$, this yields
	\begin{align*}
		\norm{f_{\me}^{(m)}}{L^\infty(J)}
		&\leq
		c \norm{f_{\me}}{L^2((0,\ell_\me))} \bigl( 2^{(m+1)/2}C_B(m)^{1/2} + 2^{(m+2)/2}C_B(m+1)^{1/2} \bigr)\\
		&\leq
		\sqrt{2}c \norm{f_{\me}}{L^2((0,\ell_\me))} \frac{(m+1)!}{r^{m+1}} \sum_{k \in \NN_0} C_B(k)^{1/2}
			\frac{(\sqrt{2}r)^k}{k!}
		.
	\end{align*}
	Since $\sqrt{2}r\leq 10\rho$ and in light of \eqref{eq:BernsteinSum}, this is sufficient to conclude that the
	Taylor series of $f_{\me}$ converges absolutely in $J$ and agrees with $f_{\me}$ there. Hence, $f_{\me}$ is analytic in
	$(0,\ell_\me)$.
\end{proof}%

\subsection{The local estimate}
Denote by $D_r \subset \CC$ for $r > 0$ the complex disk of radius $r$ centred at the origin. We use the following local
estimate, which is inspired by Turan's Lemma in \cite{Nazarov-93} and goes back to \cite{Kov01,Kovrijkine-thesis}. It is also
(implicitly) contained in several recent works such as \cite[Theorem~4.5]{GhobberJ-13}, \cite[Section~5]{EgidiV-20},
\cite{WangWZZ-19}, \cite{BeauchardJPS-21}, \cite{MartinPS}. In its current formulation, it is a one dimensional variant of
\cite[Lemma~3.5]{EgiSee21}, and its proof is reproduced below as well.

\begin{lemma}\label{lem:local}
	Let $l > 0$. Moreover, let $g \colon (0,l) \to \CC$ be a non-vanishing function having a bounded analytic
	extension $G \colon (0,l) + D_{4l} \to \CC$. Then, for every measurable set $S \subset (0,l)$ we have
	\[
		\norm{g}{L^2(S)}^2
		\geq
		24\Bigl( \frac{\abs{S}}{48l} \Bigr)^{4\frac{\log M}{\log 2}+1} \norm{g}{L^2((0,l))}^2
	\]
	with
	\[
		M
		:=
		\frac{\sqrt{l}}{\norm{g}{L^2((0,l))}} \cdot \sup_{z \in (0,l)+D_{4l}} \abs{G(z)}
		\geq
		1
		.
	\]
\end{lemma}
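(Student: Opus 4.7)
The plan is to follow the classical Tur\'an--Nazarov--Kovrijkine scheme: locate a point where $g$ is comparable to its $L^2$-mean, approximate $G$ there by a polynomial of degree $\sim\log M$, and then apply a Remez-type inequality for real polynomials to transfer control from the polynomial to $g$ on the sampling set $S$.

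First I would pick a \emph{good point} via Chebyshev's inequality: the set
\[
	A := \bigl\{ x \in (0,l) : |g(x)|^2 \geq \tfrac{1}{2l}\|g\|_{L^2((0,l))}^2 \bigr\}
\]
is non-empty (otherwise one would have $\|g\|_{L^2((0,l))}^2 < \|g\|_{L^2((0,l))}^2/2$), so I can choose some $x_0 \in A$. Since $x_0 \in (0,l)$, the disk $x_0 + D_{4l}$ is contained in $(0,l) + D_{4l}$, and Cauchy's estimates there yield
\[
	\frac{|G^{(n)}(x_0)|}{n!} \leq \frac{M\,\|g\|_{L^2((0,l))}}{\sqrt{l}\,(4l)^n}, \qquad n \in \NN_0.
\]
Truncating the Taylor series of $G$ about $x_0$ at degree $N$ produces a polynomial $P_N$ whose error on $[0,l] \subset [x_0-2l,\,x_0+2l]$ is geometrically small in $N$:
\[
	\sup_{[0,l]} |G - P_N| \leq \frac{M\,\|g\|_{L^2((0,l))}}{\sqrt{l}} \cdot 2^{-N}.
\]
Choosing $N$ as a fixed multiple of $\log_2 M$ (large enough to absorb the constants appearing later) forces this error to be a fraction of $|g(x_0)| \geq \|g\|_{L^2((0,l))}/\sqrt{2l}$, so that $P_N$ inherits the bulk of $g$.

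At this stage the problem for $g$ reduces to one for the polynomial $P_N$ of degree $N$ on $(0,l)$. For such polynomials I would invoke the classical Remez--Nazarov $L^2$-inequality
\[
	\|P_N\|_{L^2((0,l))} \leq c_1 \Bigl(\frac{c_2\, l}{|S|}\Bigr)^{N+1/2} \|P_N\|_{L^2(S)}
\]
with universal constants $c_1,c_2$, and bridge $g$ and $P_N$ on both sides via the uniform tail bound $\|g - P_N\|_{L^2(T)} \leq \sqrt{|T|}\,\sup_{[0,l]}|G-P_N|$ applied to $T = (0,l)$ and $T = S$, combined with two applications of the triangle inequality. Substituting $N = O(\log_2 M)$ and rearranging then produces an inequality of the shape $\|g\|_{L^2((0,l))}^2 \lesssim (l/|S|)^{\alpha \log M/\log 2 + \beta}\|g\|_{L^2(S)}^2$.

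The main obstacle I foresee is purely the numerical bookkeeping required to match the specific exponent $4\log M/\log 2 + 1$, the prefactor $24$, and the denominator $48$ in the statement. This forces one to pin down a concrete multiple between $N$ and $\log_2 M$ and to track the Remez constants $c_1, c_2$ against the constants coming from the Taylor tail and the triangle inequalities. Conceptually each ingredient --- Chebyshev, Cauchy, Remez --- is classical; only the constants demand attention.
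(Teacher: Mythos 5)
Your plan diverges from the paper's argument in one crucial place, and as stated the final combination does not close. The paper proves Lemma~\ref{lem:local} without ever approximating $G$ by a polynomial: it invokes Kovrijkine's $L^\infty$ estimate for analytic functions (Lemma~\ref{lem:Kovrijkine}, quoted from \cite{Kov01}) directly on a rescaled copy of $G$, and converts that $\sup$-norm statement into the desired $L^2$ bound by introducing the sublevel set
\[
	W
	=
	\Bigl\{ x\in(0,l) : \abs{g(x)} < \Bigl(\tfrac{\abs{S}}{48l}\Bigr)^{\frac{2\log M}{\log 2}}
		\tfrac{\norm{g}{L^2((0,l))}}{\sqrt{l}} \Bigr\}
	,
\]
proving $\abs{S}\geq 2\abs{W}$ (using that $g$ is non-vanishing), and then integrating $\abs{g}^2$ over $S\setminus W$ where the pointwise lower bound holds. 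Your Chebyshev-type choice of a point where $\abs{g}$ is at least its $L^2$-mean does appear in the paper (it picks $y$ with $\abs{g(y)}\geq \norm{g}{L^2((0,l))}/\sqrt{l}$ and a direction $\xi$ so that the half-interval starting at $y$ captures half the density of $W$), but the sublevel-set device has no analogue in your sketch, and it is what makes the passage from $\sup$-norm to $L^2$-norm work.

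The gap lies in the triangle-inequality bridge between $g$ and the Taylor polynomial $P_N$. Write $\tau=\frac{M\norm{g}{L^2((0,l))}}{\sqrt{l}}2^{-N}$ for your tail bound and $K=c_1(c_2 l/\abs{S})^{N+1/2}$ for the $L^2$-Remez constant. Your two triangle inequalities give
\[
	\norm{g}{L^2((0,l))}
	\leq
	K\norm{g}{L^2(S)} + \bigl(K\sqrt{\abs{S}}+\sqrt{l}\bigr)\tau
	\leq
	K\norm{g}{L^2(S)} + 2KM\,2^{-N}\norm{g}{L^2((0,l))}
	.
\]
Absorbing the error term requires $2KM\,2^{-N}\leq 1/2$, that is, $2^{N}\geq 4c_1 M (c_2 l/\abs{S})^{N+1/2}$. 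As soon as $c_2 l/\abs{S}\geq 2$ (which holds for every $S$ of interest, and for all $S$ if $c_2\geq 2$), the right-hand side grows at least as fast as $2^{N}$ in $N$, so no choice of $N$ depending only on $M$ can satisfy this. Allowing $N$ to depend on $l/\abs{S}$ as well would produce an exponent carrying extra $\log(l/\abs{S})$ factors, not the stated $4\log M/\log 2 + 1$. The obstruction is structural, not bookkeeping: $P_N$ is an \emph{approximant} of $g$, not a factor, so its approximation error is amplified by precisely the Remez constant you are trying to bound. Kovrijkine's own proof of Lemma~\ref{lem:Kovrijkine} avoids this: it counts the zeros of $\phi$ near $[0,1]$ with Jensen's formula and factors $\phi=pq$ with $p$ a genuine polynomial carrying those zeros and $q$ zero-free and controlled by Harnack, leaving no error term to absorb. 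Either adopting that factorisation argument, or taking Lemma~\ref{lem:Kovrijkine} as a black box and adding the sublevel-set step from the paper, would close the proof; the Taylor-truncation-plus-Remez combination as written does not.
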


The proof of Lemma~\ref{lem:local} relies on the following well-known result by Kovrijkine~\cite{Kov01}.

\begin{lemma}[{\cite[Lemma 1]{Kov01}}]\label{lem:Kovrijkine}
	Let $\phi\colon D_{4+\eps} \to \CC$ for some $\eps > 0$ be an analytic function with $\abs{\phi(0)}\geq 1$. 
	Moreover, let $E \subset [0,1]$ be measurable with positive measure. Then, 
	\[
		\sup_{t\in[0,1]}\abs{\phi(t)}
		\leq
		\Big( \frac{12}{\abs{E}}\Big)^{\frac{2\log M_{\phi}}{\log 2}} \sup_{t\in E} \abs{\phi(t)},
	\]
	where $M_{\phi}= \sup_{z\in D_{4}}\abs{\phi(z)}$.
\end{lemma}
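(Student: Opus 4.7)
The plan is to follow the classical three-step strategy behind Kovrijkine's argument: bound the number of zeros of $\phi$ via Jensen's formula, factor out these zeros to extract a polynomial part, and then apply a Remez-type inequality on the interval $[0,1]$.

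First, I would count the zeros of $\phi$ in a slightly smaller disk than $D_4$. Denoting by $n(r)$ the number of zeros of $\phi$ in $D_r$ (with multiplicity), Jensen's formula on $\overline{D_4}$ reads
\[
	\int_0^4 \frac{n(r)}{r}\ud r
	=
	\frac{1}{2\pi}\int_0^{2\pi} \log\abs{\phi(4\ee^{\ii\theta})}\ud\theta - \log\abs{\phi(0)}.
\]
Since $\abs{\phi(0)}\geq 1$ and $\sup_{D_4}\abs{\phi}=M_\phi$, the right-hand side is bounded by $\log M_\phi$. The monotonicity of $n(\cdot)$ then yields
\[
	n := n(2) \leq \frac{\log M_\phi}{\log 2}.
\]

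Next, I would factor the zeros in $\overline{D_2}$. Let $z_1,\dots,z_n$ be these zeros, set $p(z) := \prod_{j=1}^n (z-z_j)$, and write $\phi(z) = p(z) q(z)$ with $q$ analytic and non-vanishing on (a neighbourhood of) $\overline{D_2}$. Using Blaschke factors of $D_4$ to cancel the zeros of $\phi$ and applying the maximum principle, one bounds $\sup_{D_2}\abs{q}$ by $M_\phi$ times a controlled power of the geometric factors, and consequently obtains an oscillation estimate of the form
\[
	\frac{\sup_{[0,1]}\abs{q}}{\inf_{[0,1]}\abs{q}} \leq K^n
\]
for some absolute constant $K$; the point is that $\log\abs{q}$ is harmonic on $D_2$ and its oscillation on $[0,1]\subset D_2$ is controlled by the ratio of the ``with/without zeros'' bounds of $\phi$ on $\overline{D_4}$ versus $\overline{D_2}$.

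The third step is to invoke the Remez inequality for the polynomial $p$ of degree $n$ on $[0,1]$, which gives
\[
	\sup_{[0,1]}\abs{p} \leq \Bigl(\frac{4}{\abs{E}}\Bigr)^n \sup_E\abs{p}.
\]
Combining the Remez estimate for $p$ with the oscillation estimate for $q$, one deduces
\[
	\sup_{[0,1]}\abs{\phi} \leq \Bigl(\frac{12}{\abs{E}}\Bigr)^{2n}\sup_E\abs{\phi},
\]
and inserting the bound $n\leq \log M_\phi/\log 2$ yields the claimed inequality with exponent $2\log M_\phi/\log 2$. The main obstacle will be the calibration of constants: one must choose the intermediate disk ($D_2$ inside $D_4$) and the normalisation of the Blaschke product carefully so that the loss coming from controlling $q$ on $[0,1]$ combines with the Remez loss to produce precisely the factor $12$ in the base and the factor $2$ in the exponent. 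Multiplicities of zeros and the (measure-zero) case of zeros on $\partial D_2$ can be handled by a routine limiting argument.
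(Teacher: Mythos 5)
The paper itself contains no proof of this lemma: it is quoted from \cite{Kov01}, and the remark following Lemma~\ref{lem:Kovrijkine} merely records that Kovrijkine's argument still works for functions analytic only on $D_{4+\eps}$ at the price of the factor $2$ in the exponent. So your sketch must be measured against the classical argument, and its skeleton is indeed the right one: Jensen's formula on $D_4$ giving $n := n(2) \leq \log M_\phi/\log 2$, the factorisation $\phi = pq$ with $p$ the monic polynomial collecting the zeros in $\overline{D_2}$, the Remez inequality for $p$ (valid for complex polynomials, with the limiting argument for zeros on the relevant circles), and an estimate for the zero-free factor $q$ on $[0,1]$.

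The genuine gap is in the third ingredient: the claimed oscillation bound $\sup_{[0,1]}\abs{q}/\inf_{[0,1]}\abs{q} \leq K^n$ with an \emph{absolute} constant $K$ is false. Take $\phi(z) = \ee^{\lambda z}$ with $\lambda$ large: then $\abs{\phi(0)} = 1$, $\phi$ has no zeros, so $n = 0$ and $q = \phi$, yet the oscillation on $[0,1]$ equals $\ee^{\lambda}$, while $K^0 = 1$. The correct estimate necessarily involves $M_\phi$: one shows $\sup_{D_2}\abs{q} \leq M_\phi$ by the maximum principle on $D_3$ (since $\abs{p} \geq 1$ on $\partial D_3$), uses $\abs{q(0)} \geq \abs{\phi(0)}/\abs{p(0)} \geq 2^{-n}$ --- note that the hypothesis $\abs{\phi(0)} \geq 1$ is needed here a second time, not only in Jensen's formula, a point your sketch omits --- and then applies Harnack's inequality on $D_2$ (where $q$ is zero-free; it need not be zero-free on all of $D_4$) to the nonnegative harmonic function $\log M_\phi - \log\abs{q}$. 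This yields an oscillation bound of the form $(2^{n}M_\phi)^{C}$ with an absolute $C$, not $K^{n}$. The proof is then finished by writing $M_\phi^{C} = 2^{C\log M_\phi/\log 2}$, using $2^{Cn} \leq M_\phi^{C}$ and $n \leq \log M_\phi/\log 2$, and absorbing everything, together with the Remez factor $(4/\abs{E})^{n}$, into a bound of the form $(C'/\abs{E})^{\log M_\phi/\log 2}$; the passage to the stated form $(12/\abs{E})^{2\log M_\phi/\log 2}$ then uses $\abs{E} \leq 1$, and this absorption of the $M_\phi$-dependent losses is exactly what the extra factor $2$ in the exponent (compared with Kovrijkine's original statement) is for. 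As written, your final combination ``Remez times $K^{n}$ gives $(12/\abs{E})^{2n}$, then insert $n \leq \log M_\phi/\log 2$'' has no mechanism to absorb the $M_\phi$-dependence of the zero-free factor, so it does not yield the stated inequality until this step is repaired.
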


\begin{rem}
	The original formulation of the lemma in \cite{Kov01} requires the function $\phi$ to be analytic in $D_5$ and has the exponent
	$\log M_{\phi} / \log 2$ in the inequality rather than $2\log M_{\phi} / \log 2$. The proof there shows that it suffices for
	$\phi$ to be analytic in a complex disk of radius strictly larger than $4$. However, we were only able to reproduce the result
	with the additional factor $2$ in the exponent. Since this is not essential for our purposes, we then just work with this
	slightly larger exponent here.
\end{rem}

\begin{proof}[Proof of Lemma~\ref{lem:local}]
	Consider the open set
	\[
		W
		=
		\Bigl\{ x\in(0,l) \colon \abs{g(x)} < \Bigl(\frac{\abs{S}}{48 l}\Bigr)^{\frac{2\log M}{\log 2}}
			\frac{\norm{g}{L^2((0,l))}}{\sqrt{l}} \Bigr\}
		.
	\]
	It suffices to prove that $\abs{S}\geq 2\abs{W}$ since we then have $\abs{S\setminus W}\geq \abs{S}/2$ and by the definition of
	$W$ we obtain 
	\begin{align*}
		\norm{g}{L^2(S)}^2
		&\geq
		\norm{g}{L^2(S\setminus W)}^2
			\geq
			\frac{\abs{S}}{2}\Big(\frac{\abs{S}}{48 l}\Big)^{\frac{4\log M}{\log 2}}\frac{\norm{g}{L^2((0,l))}^2}{l}\\
		&=
		24 \Big(\frac{\abs{S}}{48 l}\Big)^{\frac{4\log M}{\log 2}+1}\norm{g}{L^2((0,l))}^2
		.
	\end{align*}

	In order to show that $\abs{S}\geq 2\abs{W}$, we may suppose that $W\neq \emptyset$. We choose $y\in(0,l)$ with
	$\abs{g(y)} \geq \norm{g}{L^2((0,l))} / \sqrt{l}$ and $\xi \in \{ \pm 1 \}$ such that the sub-interval
	$I := \{ y + \xi t \colon t \in [0,l) \} \cap (0,l)$ of $(0,l)$ satisfies
	\[
		\frac{\abs{ I \cap W }}{\abs{I}}
		\geq
		\frac{\abs{W}}{2l}
		.
	\]	
	We clearly have $I = [y,l)$ if $\xi = 1$ and $I = (0,y]$ if $\xi = -1$.
	
	Consider the function 
	\[
		\CC\ni z\mapsto \phi(z)
		=
		\frac{\sqrt{l}}{\norm{g}{L^2((0,l))}} G(y+\abs{I}\xi z).
	\]
	Now, for $\eps$ small enough we have $\abs{I}\xi z\in D_{4l}$ for all $z\in D_{4+\eps}$, so that $\phi$ is analytic in the disk
	$D_{4+\eps}$. Moreover, we have
	\[
		\sup_{t\in[0,1]}\abs{\phi(t)}
		\geq
		\abs{\phi(0)}
		=
		\frac{\sqrt{l}}{\norm{g}{L^2((0,l))}}\abs{g(y)}
		\geq
		1
	\]
	and
	\[
		M_{\phi}
		:=
		\sup_{z\in D_{4}}\abs{\phi(z)}
		\leq
		\frac{\sqrt{l}}{\norm{g}{L^2((0,l))}}\sup_{z\in y+D_{4l}}\abs{G(z)}
		\leq
		M
		.
	\]
	Applying Lemma~\ref{lem:Kovrijkine} to $\phi$ and the set $E=\{t\in[0,1] \colon y+\abs{I}\xi t\in W\}$, and using the fact that
	$G\vert_{(0,l)}=g$ and that $\abs{E} = \abs{I \cap W}/\abs{I} \geq \abs{W}/(2l)$, we obtain
	\begin{align*}
		\sup_{x\in W} \frac{\sqrt{l}}{\norm{g}{L^2((0,l))}}\abs{g(x)}
		&\geq
		\sup_{t\in E} \frac{\sqrt{l}}{\norm{g}{L^2((0,l))}}\abs{g(y+\abs{I}\xi t)}\\
		&=
		\sup_{t\in E}\abs{\phi(t)}
			\geq
			\Big( \frac{\abs{E}}{12}\Big)^{\frac{2\log M_{\phi}}{\log 2}}\sup_{t\in [0,1]}\abs{\phi(t)}
			\geq
			\Big(\frac{\abs{W}}{24 l}\Big)^{\frac{2\log M}{\log 2}}
		.
	\end{align*}
	Combining this with the definition of $W$, we conclude that
	\[
		\sup_{x\in W} \abs{g(x)}
		\leq
		\Big(\frac{\abs{S}}{48 l}\Big)^{\frac{2\log M}{\log 2}}\frac{\norm{g}{L^2((0,l))}}{\sqrt{l}} 
		\leq
		\Big(\frac{\abs{S}}{2\abs{W}}\Big)^{\frac{2\log M}{\log 2}} \sup_{x\in W}\abs{g(x)}
		.
	\]
	Since $g$ is non-vanishing, the latter requires $\abs{S}\geq 2\abs{W}$, which concludes the proof.
\end{proof}%

\subsection{Taylor expansions and conclusion of the proof}
In view of the local estimate in Lemma~\ref{lem:local}, we need to establish that for every good edge $\me$ the function $f_{\me}$
has an analytic extension to a sufficiently large complex neighborhood of $(0,\ell_\me)$. In light of Lemma~\ref{lem:analyticity},
this is done by estimating a suitable Taylor expansion of $f_{\me}$, for which we need an appropriate pointwise estimate for the
derivatives of $f_{\me}$: Analogously to \cite{Kov01,Kovrijkine-thesis}, we claim that in every good edge $\me$ there exists a
point $x_{\me} \in (0,\ell_\me)$ such that
\begin{equation}\label{eq:pointwise}
	\abs{f_{\me}^{(m)}(x_{\me})}
	\leq
	2^{(m+1)}\frac{C_B(m)^{1/2}}{\ell_\me^{1/2}} \norm{f_{\me}}{L^2((0,\ell_\me))}
	\quad\text{ for all }\quad
	m \in \NN_0
	.
\end{equation}
Indeed, assume to the contrary that for every $x \in (0,\ell_\me)$ there exists $m_x \in \NN_0$ such that 
\[
	\abs{f_{\me}^{(m_x)}(x)}^2
	>
	4^{m_x+1}\frac{C_B(m_x)}{\ell_\me} \norm{f_{\me}}{L^2((0,\ell_\me))}^2
	.
\]
It follows
\[
	\frac{1}{\ell_\me} \norm{f_{\me}}{L^2((0,\ell_\me))}^2
	<
	\frac{1}{4^{m_x+1} C_B(m_x)} \abs{f_{\me}^{(m_x)}(x)}^2
	\leq
	\sum_{m\in\NN_0} \frac{1}{4^{m+1} C_B(m)} \abs{f_{\me}^{(m)}(x)}^2
	.
\]
Integrating over $(0,\ell_\me)$ and using the definition of good edges, we obtain 
\begin{align*}
	\norm{f_{\me}}{L^2((0,\ell_\me))}^2
	&<
	\sum_{m\in\NN_0} \frac{1}{4^{m+1} C_B(m)} \norm{f_{\me}^{(m)}}{L^2((0,\ell_\me))}^2\\
	&\leq
	\sum_{m\in\NN_0} \frac{1}{2^{m+1}} \norm{f_{\me}}{L^2((0,\ell_\me))}^2
		=
		\norm{f_{\me}}{L^2((0,\ell_\me))}^2
	,
\end{align*} 
leading to a contradiction. This proves \eqref{eq:pointwise}.

We are finally in position to prove Theorem~\ref{thm:main}.
\begin{proof}[Proof of Theorem~\ref{thm:main}]
	Let $\me\in\mE$ be a good edge, and let $x_{\me} \in (0,\ell_\me)$ be chosen as in \eqref{eq:pointwise}. We first suppose that
	$f_{\me}$ does not vanish. Taking into account \eqref{eq:edgeUpper} and recalling the definition of $h$ in
	\eqref{eq:BernsteinSum}, for every $z \in x_{\me} + D_{5\lupper}$ we then have
	\begin{align*}
		\sum_{m \in \NN_0}\frac{\abs{f_{\me}^{(m)}(x_{\me})}}{m!}\abs{z-x_{\me}}^m
		&\leq
		\sum_{m\in \NN_0}\frac{1}{m!} C_B(m)^{1/2} 2^{(m+1)} (5\lupper)^m \frac{\norm{f_{\me}}{L^2((0,\ell_\me))}}{\ell_\me^{1/2}}\\
		&=
		2 \frac{\norm{f_{\me}}{L^2((0,\ell_\me))}}{\ell_\me^{1/2}} \sum_{m\in\NN_0} C_B(m)^{1/2} \frac{(10\lupper)^m}{m!}\\
		&\leq
		2\frac{\norm{f_{\me}}{L^2((0,\ell_\me))}}{\ell_\me^{1/2}} \sum_{m\in\NN_0} C_B(m)^{1/2} \frac{(10\rho)^m}{m!}\\
		&= 
		2\frac{\norm{f_{\me}}{L^2((0,\ell_\me))}}{\ell_\me^{1/2}}h
			<
			\infty
		.
	\end{align*}
	Hence, the Taylor expansion of $f_{\me}$ around $x_{\me}$ converges in the complex disk $x_{\me} + D_{5\lupper}$. Since
	$(0,\ell_\me) + D_{4\ell_\me} \subset x_{\me} + D_{5\lupper}$ due to $\ell_\me \leq \lupper$ and in light of
	Lemma~\ref{lem:analyticity}, the Taylor expansion of $f_{\me}$ around $x_{\me}$ defines therefore a bounded analytic extension
	$F_{\me} \colon (0,\ell_\me) + D_{4\ell_\me} \to \CC$ of $f_{\me}$ with
	\[
		M_{\me}
		:=
		\frac{\ell_\me^{1/2}}{\norm{f_{\me}}{L^2((0,\ell_\me))}^2} \cdot \sup_{z\in (0,\ell_\me) + D_{4\ell_\me}} \abs{F_{\me}(z)}
		\leq
		2h
		<
		\infty
		.
	\]
	Now, from the local estimate in Lemma~\ref{lem:local} and the sampling hypothesis \eqref{eq:sampling} on $\omega$ we obtain
	\begin{equation}\label{eq:local-estimate}	
		\begin{aligned}
			\norm{f_{\me}}{L^2(\omega_{\me})}^2
			&\geq
			24 \Big(\frac{\abs{\omega_{\me}}}{48\ell_\me}\Big)^{\frac{4\log M_{\me}}{\log 2}+1} \norm{f_{\me}}{L^2((0,\ell_\me))}^2\\
			&\geq
			24 \Big(\frac{\gamma}{48}\Big)^{\frac{4\log (2h)}{\log 2}+1} \norm{f_{\me}}{L^2((0,\ell_\me))}^2\\
			&=
			24 \Big(\frac{\gamma}{48}\Big)^{\frac{4\log h}{\log 2}+5} \norm{f_{\me}}{L^2((0,\ell_\me))}^2
			.
		\end{aligned}
	\end{equation}
	The right-hand side of \eqref{eq:local-estimate} obviously gives a lower bound on $\norm{f_{\me}}{L^2(\omega_{\me})}^2$ also if
	$f_{\me}$ vanishes and, thus, this estimate is valid for \emph{all} good edges $\me\in\mE$. Therefore, taking into account
	\eqref{eq:good-mass}, summing over all good edges finally gives
	\begin{align*}
		\norm{\chi_\omega f}{L^2(\cE)}^2
		&=
		\sum_{\me\in\mE} \norm{f_{\me}}{L^2(\omega_{\me})}^2
			\geq
			\sum_{\me \colon \me \text{ good}} \norm{f_{\me}}{L^2(\omega_{\me})}^2\\
		&\geq
		24 \Big(\frac{\gamma}{48}\Big)^{\frac{4\log h}{\log 2}+5} \sum_{\me \colon \me \text{ good}}
			\norm{f_{\me}}{L^2((0,\ell_\me))}^2\\
		&>
		12\Big(\frac{\gamma}{48}\Big)^{\frac{4\log h}{\log 2}+5} \norm{f}{L^2(\cE)}^2
		,
	\end{align*}
	which concludes the proof of the theorem.
\end{proof}%

\begin{rem}\label{rem:overlap}
	The presented proof heavily relies on the fact that the intervals covering each edge in the definition of sampling sets are
	adjacent, which allowed to decompose each edge and thus reduce to the situation with \eqref{eq:edgeUpper} and
	\eqref{eq:sampling} and work with edges as a whole. If, however, the intervals are allowed to have an essential overlap, as
	suggested in Remark~\ref{rem:extension}, we are forced to work with the separate intervals instead of whole edges, which makes
	the formulations more tedious. Moreover, one has to take care of the overlaps. For instance, if $(J_{\me,k})_k$ denotes the
	family of intervals covering $I_\me$, and if there is no overlap of more than two of these intervals, then
	\[
		\sum_k \norm{g}{L^2(J_{\me,k})}^2
		\leq
		2\norm{g}{L^2(I_\me)}^2
		,\quad
		g \in L^2(I_\me)
		.
	\]
	The latter requires corresponding adaptations to the notion of good and bad intervals (instead of edges) and also to the final
	part of the proof of Theorem~\ref{thm:main}, which results in the slightly different estimate mentioned in
	Remark~\ref{rem:extension}, cf.\ also \cite{EgiSee21}. This, however, is reasonable to consider only if the sampling parameters
	on \emph{all} edges gain from these overlaps.
\end{rem}

\section{Proof of Theorem~\ref{thm:Laplacian}}\label{sec:proof-of-main}

In light of Theorem~\ref{thm:main}, Theorem~\ref{thm:Laplacian} is derived as soon as the functions in the spectral subspaces
$\Ran\PP_{-\Delta_{A,Y}}(\lambda)$, or suitable transformations thereof, satisfy a Bernstein-type inequality in the sense of
Definition~\ref{def:Bernstein} with a suitable $C_B \colon \NN_0 \to [0,\infty)$.

Let $A \in L^1_\loc(\cE)$ and $Y \subset \ell^2(\mE) \oplus \ell^2(\mE_\internal)$ be as in the statement of
Theorem~\ref{thm:main}. Recall (see, e.g., Corollary~\ref{cor:Laplace} below) that $\Delta_{A,Y}$ in \eqref{eq:defMagLap} is the
unique non-positive self-adjoint operator in $L^2(\cE)$ with
\[
	\Dom(\Delta_{A,Y})
	\subset
	\cD_{A,Y}
	:=
	\{ f \in W_A(\cE) \mid (f_\me(0))_{\me\in\mE} \oplus (f_\me(\ell_\me))_{\me\in\mE_\internal} \in Y \}
\]
and
\[
	\langle \ii f' + Af , \ii g' + Ag \rangle_{L^2(\cE)}
	=
	\langle -\Delta_{A,Y}f , g \rangle_{L^2(\cE)}
	,\quad
	f \in \Dom(\Delta_{A,Y})
	,\
	g \in \cD_{A,Y}
	.
\]
Moreover, introducing the notation
\[
	(\ii\partial + A)f
	:=
	\ii f' + Af
	,\quad
	f \in W_A(\cE)
	,
\]
we have
\[
	-\Delta_{A,Y}f
	=
	(\ii\partial + A)^2 f
	,\quad
	f \in \Dom(\Delta_{A,Y})
	.
\]

The following proposition is related to the desired Bernstein-type inequalities, but deals with magnetic derivatives instead
of ordinary ones.

\begin{prop}\label{prop:powerLaplace}
	For all $m \in \NN_0$ and all $f,g \in \Dom^\infty(\Delta_{A,Y}) := \bigcap_{k\in\NN}\Dom(\Delta_{A,Y}^k)$ we have
	\[
		\langle (\ii\partial + A)^m f , (\ii\partial + A)^m g \rangle_{L^2(\cE)}
		=
		\langle (-\Delta_{A,Y})^m f , g \rangle_{L^2(\cE)}
		.
	\]
\end{prop}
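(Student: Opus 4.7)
The plan is to induct on $m$, reducing everything to the defining relation $-\Delta_{A,Y} h = (\ii\partial + A)^2 h$ for $h \in \Dom(\Delta_{A,Y})$ together with the form identity $\langle -\Delta_{A,Y} h_1, h_2 \rangle_{L^2(\cE)} = \langle (\ii\partial + A) h_1, (\ii\partial + A) h_2 \rangle_{L^2(\cE)}$ valid for $h_1 \in \Dom(\Delta_{A,Y})$ and $h_2 \in \cD_{A,Y}$, both of which are recalled just before the proposition. The base case $m = 0$ is trivial, and $m = 1$ is a direct application of the form identity since $\Dom^\infty(\Delta_{A,Y}) \subset \Dom(\Delta_{A,Y}) \subset \cD_{A,Y}$.

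The key intermediate step I would establish first is the intertwining identity
\[
(-\Delta_{A,Y})^k h = (\ii\partial + A)^{2k} h
\quad\text{in}\quad L^2(\cE)
\]
for every $h \in \Dom^\infty(\Delta_{A,Y})$ and every $k \in \NN$. This follows by induction on $k$: the base case $k = 1$ is exactly the defining relation, and for the inductive step the element $(-\Delta_{A,Y})^{k-1} h$ belongs to $\Dom(\Delta_{A,Y})$ by the $\Dom^\infty$ assumption, so the defining relation applies to it and gives $-\Delta_{A,Y}[(-\Delta_{A,Y})^{k-1} h] = (\ii\partial + A)^2 [(-\Delta_{A,Y})^{k-1} h]$; substituting the inductive hypothesis yields the claim.

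For the main argument, given $f, g \in \Dom^\infty(\Delta_{A,Y})$ and $m \geq 2$, I would exploit self-adjointness of $\Delta_{A,Y}^k$ to rewrite, with $k := \lfloor m/2 \rfloor$,
\[
\langle (-\Delta_{A,Y})^m f, g \rangle_{L^2(\cE)}
=
\langle (-\Delta_{A,Y})^{m-k} f, (-\Delta_{A,Y})^k g \rangle_{L^2(\cE)}.
\]
If $m = 2k$ is even, applying the intertwining identity to both factors immediately produces $\langle (\ii\partial + A)^m f, (\ii\partial + A)^m g \rangle_{L^2(\cE)}$. If $m = 2k+1$ is odd, I would first apply the form identity to $h_1 := (-\Delta_{A,Y})^k f$ and $h_2 := (-\Delta_{A,Y})^k g$, both of which lie in $\Dom(\Delta_{A,Y}) \subset \cD_{A,Y}$ by the $\Dom^\infty$ assumption, and then rewrite each of the two magnetic-derivative factors using the intertwining identity to obtain the same conclusion.

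The main obstacle is essentially bookkeeping: every manipulation requires the object on which $\ii\partial + A$ or $-\Delta_{A,Y}$ acts to belong to the appropriate domain ($W_A(\cE)$, $\cD_{A,Y}$, or $\Dom(\Delta_{A,Y})$, respectively), and the intertwining identity itself is only meaningful once one knows that $(\ii\partial + A)^{2k} h$ can be iteratively formed. The assumption $f, g \in \Dom^\infty(\Delta_{A,Y})$ is precisely what guarantees these memberships at every stage, since $\Dom^\infty(\Delta_{A,Y})$ is invariant under $\Delta_{A,Y}$ by construction, so no additional regularity argument beyond the definition of $\Delta_{A,Y}$ is required.
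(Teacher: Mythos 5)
Your proposal is correct and follows essentially the same route as the paper's proof: split by parity of $m$, use self-adjointness of the power $(-\Delta_{A,Y})^{\lfloor m/2\rfloor}$ to balance the inner product, apply the form identity once in the odd case, and translate $(-\Delta_{A,Y})^j$ into $(\ii\partial+A)^{2j}$ on $\Dom^\infty(\Delta_{A,Y})$. The only stylistic difference is that you isolate the intertwining identity $(-\Delta_{A,Y})^k h = (\ii\partial+A)^{2k}h$ as an explicit intermediate lemma, whereas the paper uses it tacitly in the final equality of each case; your framing as an ``induction on $m$'' is slightly misleading since the induction actually runs on $k$ in the intertwining lemma, but the argument itself is sound.
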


\begin{proof}
	Let $f,g \in \Dom^\infty(\Delta_{A,Y})$ and $m \in \NN_0$. If $m$ is even, say $m = 2j$ with $j \in \NN_0$, then
	\[
		\langle (-\Delta_{A,Y})^m f , g \rangle_{L^2(\cE)}
		=
		\langle (-\Delta_{A,Y})^j f , (-\Delta_{A,Y})^j g \rangle_{L^2(\cE)}
		=
		\langle (\ii\partial + A)^m f , (\ii\partial + A)^m g \rangle_{L^2(\cE)}
		.
	\]
	If $m$ is odd, say $m = 2j+1$ with $j \in \NN_0$, then
	$(-\Delta_{A,Y})^j f, (-\Delta_{A,Y})^j g \in \Dom(\Delta_{A,Y}) \subset \cD_{A,Y}$, so that
	\begin{align*}
		\langle (-\Delta_{A,Y})^m f , g \rangle_{L^2(\cE)}
		&=
		\langle (-\Delta_{A,Y})(-\Delta_{A,Y})^j f , (-\Delta_{A,Y})^j g \rangle_{L^2(\cE)}\\
		&=
		\langle (\ii\partial + A)(-\Delta_{A,Y})^j f , (\ii\partial + A)(-\Delta_{A,Y})^j g \rangle_{L^2(\cE)}\\
		&=
		\langle (\ii\partial + A)^m f , (\ii\partial + A)^m g \rangle_{L^2(\cE)}
		.\qedhere
	\end{align*}
\end{proof}%

\begin{rem}\label{rem:powerLaplace}
	In light of \eqref{eq:defMagLap}, Proposition~\ref{prop:powerLaplace}, and its proof, it is easy to see that the quadratic form
	$\fa_{A,Y,m}$ associated to the power $(-\Delta_{A,Y})^m$ is given by
	\[
		\fa_{A,Y,m}[ f , g ]
		=
		\langle (\ii\partial + A)^m f , (\ii\partial + A)^m g \rangle_{L^2(\cE)}
		,\quad
		f,g \in \Dom[\fa_{A,Y,m}]
		,
	\]
	with
	\[
		\Dom[\fa_{A,Y,m}]
		=
		\Bigl\{ f \in W_A(\Graph) \mid (\ii\partial + A)^k f \in W_A(\Graph),\
			\bdEv_+((\ii\partial + A)^k f) \in Y_k ,\ k = 0, \dots, m-1 \Bigr\}
		,
	\]
	where $Y_k = Y$ for $k$ even and $Y_k = \{ (-\alpha)\oplus\beta \mid \alpha \oplus \beta \in Y \}^\perp$ for $k$ odd,
	cf.\ also Remark~\ref{rem:boundary} below.
\end{rem}

In the particular case of $A = 0$, the left-hand side of the equality in Proposition~\ref{prop:powerLaplace} with $g = f$
coincides with $\norm{f^{(m)}}{L^2(\cE)}^2$ and thus provides the means to derive a suitable Bernstein-type equality, see the
proof of the second part of Corollary~\ref{cor:powerLaplace} below. If, however, $A \neq 0$, we first have to use a gauge
transformation to translate the magnetic derivatives into ordinary one. To this end, let us introduce the unitary transformation
\[
	\gauge_A \colon L^2(\cE) \to L^2(\cE)
	,\quad
	f \mapsto
	\biggl( f_\me \cdot \exp\Bigl( -\ii\int_0^\cdot A_\me(s) \,\dd s \Bigr) \biggr)_{\me \in \mE}
	.
\]
It is then easy to see (cf.\ Lemma~\ref{lem:magneticSobolev} below) that $U_A$ maps $W_A(\cE)$ onto
$W^{1,2}(\cE)$ with
\begin{equation}\label{eq:gaugeDerivative}
	(U_Af)'
	=
	U_A((\partial - \ii A)f)
	=
	-\ii U_A((\ii \partial + A)f)
	\in
	L^2(\cE)
	,\quad
	f \in W_A(\cE)
	.
\end{equation}
With this in mind, we derive from Proposition~\ref{prop:powerLaplace} the following result.

\begin{cor}\label{cor:powerLaplace}
	In the situation of Proposition~\ref{prop:powerLaplace}, $U_A$ maps $\Dom^\infty(\Delta_{A,Y})$ into
	$W^{\infty,2}(\cE)$, and we have
	\begin{equation}\label{eq:gaugePower}
		\langle (U_Af)^{(m)} , (U_Ag)^{(m)} g \rangle_{L^2(\cE)}
		=
		\langle (-\Delta_{A,Y})^m f , g \rangle_{L^2(\cE)}
	\end{equation}
	for all $m \in \NN_0$ and all $f,g \in \Dom^\infty(\Delta_{A,Y})$.
	In particular, for every $f \in \Ran \PP_{-\Delta_{A,Y}}(\lambda)$ the function $U_Af \in W^{\infty,2}(\cE)$
	satisfies a Bernstein-type inequality with respect to $C_B \colon \NN_0 \to [0,\infty)$, $C_B(m) = \lambda^m$.
\end{cor}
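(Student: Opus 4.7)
The plan is to deduce all three parts of the corollary from a single iteration formula: for every $f \in \Dom^\infty(\Delta_{A,Y})$ and every $m \in \NN_0$,
\[
	(U_A f)^{(m)}
	=
	(-\ii)^m U_A\bigl( (\ii\partial + A)^m f \bigr).
\]
First I would prove this by induction on $m$. The base case $m=0$ is trivial, and $m=1$ is exactly \eqref{eq:gaugeDerivative}. For the inductive step, I need to check that $(\ii\partial+A)^{m-1} f$ lies in $W_A(\cE)$ so that \eqref{eq:gaugeDerivative} can be applied to it once more. This follows by chasing the definition of $\Dom(\Delta_{A,Y})$ in \eqref{eq:defMagLap} together with the identity $-\Delta_{A,Y} = (\ii\partial + A)^2$ on $\Dom(\Delta_{A,Y})$: if $f \in \Dom(\Delta_{A,Y}^k)$ then applying this identity $k$ times shows $(\ii\partial+A)^{2k}f$ makes sense, and in between we pick up memberships of $(\ii\partial+A)^j f$ in $W_A(\cE)$ for all $j \leq 2k-1$. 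Taking intersections over $k$ delivers the claim for $f \in \Dom^\infty(\Delta_{A,Y})$.

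The iteration formula immediately implies the first claim: since $U_A\colon W_A(\cE) \to W^{1,2}(\cE)$ is well-defined and $L^2$-bounded, $(U_A f)^{(m)} \in L^2(\cE)$ for every $m$, hence $U_A f \in W^{\infty,2}(\cE)$. For \eqref{eq:gaugePower}, I would simply compute
\[
	\langle (U_Af)^{(m)} , (U_Ag)^{(m)} \rangle_{L^2(\cE)}
	=
	\langle U_A(\ii\partial+A)^m f , U_A(\ii\partial+A)^m g \rangle_{L^2(\cE)}
	=
	\langle (\ii\partial+A)^m f , (\ii\partial+A)^m g \rangle_{L^2(\cE)}
\]
where the first equality uses the iteration formula (the factors $(-\ii)^m$ from $f$ and $\overline{(-\ii)^m}$ from $g$ cancel) and the second uses unitarity of $U_A$. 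Then Proposition~\ref{prop:powerLaplace} supplies the right-hand side of \eqref{eq:gaugePower}.

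For the Bernstein inequality, let $f \in \Ran \PP_{-\Delta_{A,Y}}(\lambda)$. By functional calculus applied to the non-negative self-adjoint operator $-\Delta_{A,Y}$, the spectral projector $\PP_{-\Delta_{A,Y}}(\lambda)$ maps $L^2(\cE)$ into $\Dom((-\Delta_{A,Y})^k)$ for every $k \in \NN$, so $f \in \Dom^\infty(\Delta_{A,Y})$ and \eqref{eq:gaugePower} applies with $g = f$. Spectral calculus then gives
\[
	\langle (-\Delta_{A,Y})^m f , f \rangle_{L^2(\cE)}
	=
	\int_0^\lambda \mu^m \, \mathrm{d} \norm{\PP_{-\Delta_{A,Y}}(\mu) f}{L^2(\cE)}^2
	\leq
	\lambda^m \norm{f}{L^2(\cE)}^2,
\]
and unitarity of $U_A$ yields $\norm{f}{L^2(\cE)}^2 = \norm{U_A f}{L^2(\cE)}^2$, which together with \eqref{eq:gaugePower} produces \eqref{eq:bernstein} with $C_B(m) = \lambda^m$.

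The only point that requires genuine care is the induction underlying the iteration formula: verifying that the intermediate magnetic derivatives $(\ii\partial+A)^j f$ really belong to $W_A(\cE)$, so that the differentiation rule \eqref{eq:gaugeDerivative} can be reapplied. Once this is secured, everything else is a direct combination of unitarity of $U_A$, Proposition~\ref{prop:powerLaplace}, and the functional calculus for $-\Delta_{A,Y}$.
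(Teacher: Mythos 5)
Your proof is correct and follows essentially the same route as the paper: establish the iteration formula $(U_Af)^{(m)} = (-\ii)^m U_A\bigl((\ii\partial+A)^m f\bigr)$, combine it with unitarity of $U_A$ and Proposition~\ref{prop:powerLaplace} to obtain \eqref{eq:gaugePower}, and finish with the spectral functional calculus for $-\Delta_{A,Y}$. The only difference is cosmetic: the paper asserts $(\ii\partial+A)^m f \in W_A(\cE)$ for $f\in\Dom^\infty(\Delta_{A,Y})$ as ``clear'' and then says ``iterating \eqref{eq:gaugeDerivative}'', whereas you spell out the underlying induction and the domain-chasing that justifies reapplying \eqref{eq:gaugeDerivative}; the substance is identical.
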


\begin{proof}
	For $f \in \Dom^\infty(\Delta_{A,Y})$ and $m \in \NN_0$ we clearly have $(\ii\partial + A)^m f \in W_A(\cE)$.
	Iterating \eqref{eq:gaugeDerivative}, we thus obtain
	\[
		(U_Af)^{(m)}
		=
		U_A((\partial - \ii A)^m f)
		=
		(-\ii)^m U_A((\ii\partial + A)^m f)
		\in
		L^2(\cE)
		,\quad
		f \in \Dom^\infty(\Delta_{A,Y}),\ m \in \NN_0
		.
	\]
	In particular, $U_A$ indeed maps $\Dom^\infty(\Delta_{A,Y})$ into $W^{\infty,2}(\cE)$, and the assertion of
	Proposition~\ref{prop:powerLaplace} simply rewrites as \eqref{eq:gaugePower} since $U_A$ is unitary. The remaining statement
	then follows from \eqref{eq:gaugePower} with $g = f$ by functional calculus,
	\[
		\norm{ (U_Af)^{(m)} }{L^2(\cE)}^2
		=
		\langle (-\Delta_{A,Y})^m f , f \rangle_{L^2(\cE)}
		\leq
		\lambda^m \langle f , f \rangle_{L^2(\cE)}
		=
		\lambda^m \norm{ U_Af }{L^2(\cE)}^2
		,
	\]
	which completes the proof.
\end{proof}%

Before we finally turn to the proof of Theorem~\ref{thm:Laplacian}, let us note that for every
$f \in \Ran\PP_{-\Delta_{A,Y}}(\lambda)$ we have
\begin{equation}\label{eq:magneticDual}
	(\ii\partial + A)f
	\in
	\Ran\PP_{-\Delta_{A,Y_-}}(\lambda)
	,
\end{equation}
where
\[
	Y_-
	=
	\{ (-\alpha) \oplus \beta \mid \alpha \oplus \beta \in \ell^2(\mE) \oplus \ell^2(\mE_\internal) \}^\perp
	.
\]
This is a consequence of the fact that the operators $-\Delta_{A,Y}$ and $-\Delta_{A,Y_-}$ are in some sense dual to one
another, see Corollary~\ref{cor:Laplace} below.

\begin{proof}[Proof of Theorem~\ref{thm:Laplacian}]
	By Corollary~\ref{cor:powerLaplace}, for $f \in \Ran\PP_{-\Delta_{A,Y}}(\lambda)$ the function $U_Af$ satisfies a Bernstein-type
	inequality with respect to $C_B \colon \NN_0 \to [0,\infty)$ with $C_B(m) = \lambda^m$. The corresponding quantity $h$
	in~\eqref{eq:BernsteinSum} can be written as $h = \exp(10\rho\sqrt{\lambda})$. The claim for $f$ itself is then an immediate
	consequence of Theorem~\ref{thm:main} applied to $U_Af$ and the fact that
	\[
	  \norm{U_Af}{L^2(\cE)}
		=
		\norm{f}{L^2(\cE)}
		\quad\text{ and }\quad
		\norm{\chi_\omega U_Af}{L^2(\cE)}
		=
		\norm{\chi_\omega f}{L^2(\cE)}
		.
	\]
	In light of \eqref{eq:magneticDual}, the statement for $(\ii\partial + A)f$ follows in the same way with $\Delta_{A,Y}$ replaced
	by $\Delta_{A,Y_-}$.
\end{proof}

\section{Metric graphs}\label{sec:special-case-graph}

While we have seen that our main results turn out to hold for a rather broad class of operators acting on functions supported on
$\cE$, as long as they admit a natural factorisation, we now present how our theory can be specialised if $\cE$ is identified
with the edge set \emph{of a metric graph}; we refer to \cite{Mug19} for a precise introduction to this class of metric measure
spaces. For convenience, we abbreviate
\[
	\bdSp
	=
	\ell^2(\mE) \oplus \ell^2(\mE_\internal)
	,
\]
and equip $\cH$ with the natural inner product.

We start by discussing how different choices of $Y$ on respective metric graphs lead to certain realisations of the
(magnetic) Laplacian.

\begin{exa}\label{exa:vert-cond}
	For simplicity, we consider the case $A = 0$, although, in view of Remark~\ref{rem:nothing-changes-p}\,(2), nothing substantial
	would change by allowing for, say, $0\ne A\in L^p(\Graph)$ with some $p\in [2,\infty]$.
	
	\begin{enumerate}[(1)]

		\item
		The choice $Y = \{ 0 \}$ leads to the (decoupled) Dirichlet Laplacian, whereas the choice $Y = \bdSp$ (that is,
		$Y^\perp = \{ 0 \}$) corresponds to the (decoupled) Neumann Laplacian.
	
		\item
		Suppose, in addition, that the metric graph $\Graph$ is locally finite, i.e., that for each vertex $\mv \in \mV$
		the set $\mE_\mv$ of edges incident in $\mv$ (i.e., of edges one of whose endpoints is identified with $\mv$) is finite.
		In this case, the most common realisation of the free Laplacian, the \emph{standard Laplacian}, is obtained by imposing
		continuity and Kirchhoff-type vertex conditions. More precisely, for each vertex $\mv$ we define vectors
		$\iota^+_{\mv} = (\iota^+_{\mv,\me})_{\me \in \mE} \in \ell^2(\mE)$ and
		$\iota^-_{\mv} = (\iota^-_{\mv,\me})_{\me\in\mE_\internal} \in \ell^2(\mE_\internal)$ by
		\[
			\iota^+_{\mv,\me}
			=
			\begin{cases}
				1 ,& \mv \text{ is initial endpoint for }\me,\\
				0 ,& \text{otherwise},
			\end{cases}
			\qquad
			(\me \in \mE)
		\]
		and
		\[
			\iota^-_{\mv,\me}
			=
			\begin{cases}
				1 ,& \mv \text{ is terminal endpoint for }\me,\\
				0 ,& \text{otherwise},
			\end{cases}
			\qquad
			(\me \in \mE_\internal)
			.
		\]
		The standard Laplacian is then generated by choosing $Y$ as the closure of the linear span of the vectors
		$y_\mv := \iota^+_\mv \oplus \iota^-_\mv \in \bdSp$, $\mv \in \mV$, that is,
		\begin{equation}\label{eq:Y-egisee}
			Y
			=
			\overline{\Span_\CC\{ y_\mv \in \bdSp \colon \mv \in \mV \}}
			.
		\end{equation}
		Note that each $y_\mv$ indeed belongs to $\bdSp$ since the graph is assumed to be locally finite and, therefore, $y_\mv$ has
		only finitely many non-zero entries. It is then not hard to see that every $f \in \Dom(\Delta_{0,Y})$ is continuous across the
		vertices and satisfies the Kirchhoff-type condition
		\[
			\sum_{\me \in \mE} 	\iota^-_{\mv,\me} f_\me'(\ell_\me) - \sum_{\me \in \mE_\mv} \iota^+_{\mv,\me} f_\me'(0)
			=
			0
			,\quad
			\mv \in \mV
			,
		\]
		i.e., at each vertex the sum of incoming flows equals the sum of outgoing flows.
	
		\item
		In the situation of the preceding item, another prominent choice for $Y$ arises, for instance, by taking
		\[
			Y
			=
			\Span_\CC\{ (-\iota^+_\mv) \oplus \iota^-_\mv \in \bdSp \colon \mv \in \mV \}^\perp
		\]
		with $\iota^\pm_\mv$ as above, cf.\ Remark~\ref{rem:boundary} below. These are sometimes called
		\emph{anti-Kirchhoff conditions} and correspond to $\delta'$-con\-di\-tions with vanishing coupling strength. It is worth to
		note that they are dual to Kirchhoff boundary conditions in the sense of Corollary~\ref{cor:Laplace}\,(b) below.

		\item
		Further covered realisations are the ones that involve appropriate weights in the
		continuity and Kirchhoff-type vertex conditions, and whose role in the theory of positivity preserving semigroups and strict
		positivity of ground states have been discussed in \cite[Theorem~6.85]{Mug14} and \cite{Kur19}. However, $\delta$- or
		$\delta'$-couplings with non-trivial coupling parameters do not fit into the above framework.

	\end{enumerate}
\end{exa}

We now show how the estimates \eqref{eq:eigenfunctions} and \eqref{eq:eigenfunctionsDer} can become uniform in the geometric
parameters of a certain class of metric graphs: To begin with, observe that if a metric graph $\Graph$ is connected and has
finitely many edges, all of them of finite length, then the spectrum of $\Delta_{A,Y}$ on $L^2(\Graph)$ consists of a sequence of
real, nonnegative eigenvalues that accumulate at $+\infty$, cf.\ Lemma~\ref{lem:magneticSobolev} and Corollary~\ref{cor:Laplace}
below. We denote these eigenvalues in non-decreasing order and counting multiplicities by
\[
	\lambda_1
	\leq
	\lambda_2
	\leq
	\lambda_3
	\leq
	\ldots
	.
\]
If we now consider the standard Laplacian, i.e., the realisation discussed in Example~\ref{exa:vert-cond}\,(2) (in particular, we
assume $A=0$), and if $\Graph$ is not a cycle, then $\lambda_1=0$ is a simple eigenvalue and the two-sided (sharp) estimate
\[
	\frac{k^2 \pi^2}{4 |\Graph|^2}
	\leq
	\lambda_k
	\leq
	\left(k-1+\frac{3}{2}\beta+ \frac{\abs{N}}{2}\right)^2 \frac{\pi^2}{|\Graph|^2}
	,\quad
	k = 2,3,\ldots,
\]
on the $k$-th eigenvalue is known to hold, see \cite[Theorem~1]{Fri05}, \cite[Theorem~4.9]{BerKenKur17}, and
\cite[Theorem~2]{KurSer18}. Here, $\abs{N}$ denotes the number of vertices of degree $1$, on which -- by definition -- plain
Neumann conditions are imposed, $\beta := \abs{E} - \abs{V} + 1$ is the \emph{Betti number of $\Graph$}, and
$\abs{\Graph} := \sum_{\me \in \mE} \ell_\me$ denotes the \emph{total length} of $\Graph$. An alternative upper bound involving
the diameter $D$ of the graph, i.e.\ the supremum of all distances between any two points in $\Graph$, has recently been obtained
in \cite[Theorem~5.2]{DufKenMug22}, namely
\[
	\lambda_k
	\leq
	(k+\beta-1)^2 \frac{\pi^2}{D^2}
	,\quad
	k = 2,3,\ldots.
\]
As a consequence, Theorem~\ref{thm:Laplacian} implies estimates for linear combinations of eigenfunctions corresponding to the
$k$ lowest eigenvalues that are uniform in the total length $\abs{\Graph}$ of $\Graph$, the Betti number $\beta$, and the number
$\abs{N}$ of vertices of degree $1$ (resp.\ the diameter of the graph).

\begin{cor}\label{cor:appl-metr-gr}
	Let $\Graph$ be a finite metric graph of finite total length. Let $f \neq 0$ be a finite linear combination of eigenfunctions
	corresponding to the $k$ lowest eigenvalues (counting multiplicities) of the standard Laplacian $\Delta_\Graph$ on $\Graph$,
	as in Example~\ref{exa:vert-cond}\,(2). Then, for every $(\gamma,\rho)$-sampling set $\omega \subset \fE$ we have
	\[
		\norm{\chi_\omega f}{L^2(\Graph)}^2
		>
		C \norm{f}{L^2(\Graph)}^2
	\]
	and
	\[
		\norm{\chi_\omega f'}{L^2(\Graph)}^2
		>
		C \norm{f'}{L^2(\Graph)}^2
	\]
	with a constant $C > 0$ satisfying
	\[
		12\Bigl( \frac{\gamma}{48} \Bigr)^{\frac{40\rho}{\log 2}\bigl(k-1+\frac{3}{2}\beta + \frac{\abs{N}}{2}\bigr)
			\frac{\pi}{\abs{\Graph}}+5}
		\leq 
		C
		\leq 
		12\Bigl( \frac{\gamma}{48} \Bigr)^{\frac{20\rho k \pi}{\abs{\Graph}\log 2}+5}
		.
	\]
	Furthermore, for metric graphs of diameter $D$ the constant $C$ can be chosen such that we have the lower bound
	\[
		C
		\geq
		12\Bigl( \frac{\gamma}{48} \Bigr)^{\frac{40\rho(k+\beta-1)\pi}{D\log 2}+5}
		.
	\]
\end{cor}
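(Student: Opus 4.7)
The plan is straightforward: apply Theorem~\ref{thm:Laplacian} with $\lambda=\lambda_k$ and then substitute the spectral-geometric eigenvalue bounds recalled in the paragraph preceding the corollary. The standard Laplacian $\Delta_\Graph$ is precisely the operator $\Delta_{0,Y}$ from Example~\ref{exa:vert-cond}\,(2), with $A=0$ and $Y$ the closure of the span of the vectors $y_\mv$; the hypothesis that $\Graph$ is finite with $\abs{\Graph}<\infty$ together with Assumption~\ref{assum-main} (which is automatic here) ensures that $\Delta_\Graph$ has compact resolvent, so that its spectrum is purely discrete and the enumeration $\lambda_1\leq\lambda_2\leq\cdots$ makes sense (cf.\ Lemma~\ref{lem:magneticSobolev}).

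First, since $f$ is a finite linear combination of eigenfunctions associated with $\lambda_1,\dots,\lambda_k$, we have $f\in\Ran\PP_{-\Delta_\Graph}(\lambda_k)\setminus\{0\}$. Theorem~\ref{thm:Laplacian} therefore yields
\[
	\norm{\chi_\omega f}{L^2(\Graph)}^2
	>
	12\Bigl(\frac{\gamma}{48}\Bigr)^{\frac{40\rho\sqrt{\lambda_k}}{\log 2}+5}\norm{f}{L^2(\Graph)}^2,
\]
and, because $A=0$ reduces $\ii f'+Af$ to $\ii f'$ (whose $L^2$-norm on any measurable set coincides with that of $f'$), the corresponding estimate for $f'$ as well. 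Hence the constant
\[
	C
	:=
	12\Bigl(\frac{\gamma}{48}\Bigr)^{\frac{40\rho\sqrt{\lambda_k}}{\log 2}+5}
\]
does the job for both inequalities.

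Since $0<\gamma/48<1$, the constant $C$ is monotonically decreasing in $\sqrt{\lambda_k}$. Plugging in the upper bound $\sqrt{\lambda_k}\leq \bigl(k-1+\tfrac{3}{2}\beta+\tfrac{\abs{N}}{2}\bigr)\pi/\abs{\Graph}$ from \cite{Fri05,BerKenKur17,KurSer18} yields the first lower bound on $C$; plugging in the diameter bound $\sqrt{\lambda_k}\leq (k+\beta-1)\pi/D$ from \cite{DufKenMug22} yields the second; and plugging in the lower bound $\sqrt{\lambda_k}\geq k\pi/(2\abs{\Graph})$ from \cite{Fri05} yields the upper bound on $C$. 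The proof is therefore essentially bookkeeping, with no genuine analytic obstacle: Theorem~\ref{thm:Laplacian} does all the work, and the standard Laplacian enters only through the available spectral-geometric estimates on $\lambda_k$. The only minor sanity checks are that the cited eigenvalue bounds apply under the hypotheses of the corollary -- in particular, the sharp lower bound was originally stated for non-cycle graphs, but the cycle case is elementary and causes no difficulty.
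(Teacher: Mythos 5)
Your proposal is correct and follows the route the paper implicitly intends: the paper states the two-sided eigenvalue bounds immediately before the corollary precisely so that one can apply Theorem~\ref{thm:Laplacian} with $\lambda = \lambda_k$ to $f \in \Ran\PP_{-\Delta_\Graph}(\lambda_k)\setminus\{0\}$, obtain $C_0 := 12(\gamma/48)^{40\rho\sqrt{\lambda_k}/\log 2 + 5}$, use that $C_0$ is decreasing in $\sqrt{\lambda_k}$, and substitute. The handling of the $f'$-inequality (via $A = 0$, so $\ii f' + Af = \ii f'$ and $\lvert \ii f' \rvert = \lvert f' \rvert$ pointwise, both on $\omega$ and on $\Graph$) is correctly noted, as is the need for $\Delta_\Graph$ to have discrete spectrum. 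Your substitution bookkeeping ($\sqrt{\lambda_k} \geq k\pi/(2\abs{\Graph})$ giving the upper bound, the two upper bounds on $\sqrt{\lambda_k}$ giving the two lower bounds) matches the stated exponents. One minor caveat you could have flagged more explicitly: the cited eigenvalue estimates are stated for $k \geq 2$, which is also the range in which $f' \neq 0$ is guaranteed for a connected graph (for $k=1$, $f$ is constant and the $f'$-inequality degenerates), so the corollary should really be read with $k \geq 2$, or at least the bounds interpreted with that proviso. This is an imprecision in the corollary statement, not a flaw in your argument.
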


We point out that also the results of Section~\ref{sec:appllications-semig} can be formulated for metric graph
simply by replacing $\cE$ with $\Graph$. In particular, if $A=0$ and the two-sided eigenvalue estimates from above
are taken into account, the statement of Corollary~\ref{cor:trace} can be formulated only in terms of geometric properties of
the graph.

\begin{rem}\label{rmk:unique-continuation}
	As mentioned in the introduction, on metric graphs the unique continuation principle fails to hold in general. This severely
	reduces the possibility of extending our main results to general subsets $\omega$ of metric graphs. One may, for instance,
	na\"ively wonder whether the sampling property of $\omega$ in the sense of Definition~\ref{def:gammarhosampling} may be relaxed
	by merely assuming that a \emph{subgraph} $\omega\subset \Graph$ satisfies 
	\begin{equation}\label{eq:gamma-thickness-3}
		\abs{\omega}
		\geq
		\gamma |\Graph|
		,
	\end{equation}
	that is, assuming that a fraction of $\Graph$ -- but not necessarily of \emph{each} edge -- belongs to $\omega$.
	However, Theorem~\ref{thm:Laplacian} generally fails to hold under this weaker assumption, and a counterexample is given by an
	eigenfunction supported on the loop of a lasso graph $\Graph$, see Figure~\ref{fig:wagen}.
	\begin{figure}[H]
		\begin{tikzpicture}
			\draw[very thick, blue] (.2,0) -- (0,1) -- (1.1,1) -- (1.3,0) -- cycle;
			\draw[fill = blue] (0,1) circle (1mm);
			\draw[fill = blue] (1.1,1) circle (1mm);
			\draw[fill = blue] (1.3,0) circle (1mm);
			\draw[fill = blue] (.2,0) circle (1mm);

			\draw[fill = blue] (-.75,1) circle (1mm);
			\draw[fill = blue] (-.5,1.25) circle (1mm);
			\draw[fill = blue] (-.25,1.1) circle (1mm);
			\draw[very thick, blue] (-.75,1) -- (-.5,1.25) -- (-.25,1.1) -- (0,1);

			\begin{scope}
				\draw[thick, red, rounded corners = 2mm]
				(-0.1,1) -- (.275,1.25) -- (0.875,.75) -- 
				(1.2,1.1) -- (1.3,.72)-- (1.05,.245) -- 
				(1.4,0) -- (1.075,-.25) -- (.475,.25) --	
				(.15,-.1) -- (-.05,.245)  -- (.3,.75) -- 
				cycle ;
			\end{scope}
		\end{tikzpicture}
		\caption{A metric graph where the unique continuation principle is not satisfied; the second eigenfunction of the Laplacian
		with standard vertex conditions is depicted in red.}\label{fig:wagen}
	\end{figure}
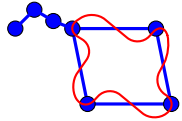
	
	On the other hand, it seems plausible that the fact that $\omega$ must intersect all edges is an artefact of our method, which
	is based on complex analytical tools that seem to be difficult to extend from individual edges to network-like ramified spaces, 
	and that this condition might be relaxed for at least some classes of metric graphs. Indeed, at least in the case of metric
	graphs with pairwise incommensurable edge lengths it is known that the eigenfunctions cannot vanish identically on any open
	subgraph, see \cite[Section~3.4]{BerKuc13}.
\end{rem}

We conclude this section by applying Theorem~\ref{thm:main} to the so-called \emph{torsion function}, which we have
already encountered in the introduction.

\begin{exa}\label{exa:torsion}
	Given a compact metric graph, its \emph{torsion function} $u$ for an arbitrary strictly positive, self-adjoint realisation of
	$\Delta$ with positive resolvent is the (unique) solution of 
	\begin{equation}\label{eq:polya0}
		-\Delta u(x) = 1
		,\quad
		x\in \Graph
		,
	\end{equation}
	see~\cite{MugPlu23}. If the vertex conditions of such a realisation are either of Dirichlet or of standard (continuity and
	Kirchhoff) type -- on some set $\emptyset \neq \mVD \subset \mV$ and on $\mV\setminus\mVD$, respectively -- then it is known
	(see the proof of \cite[Proposition~5.1]{MugPlu23}) that
	\[
		\frac{T(\Graph;\mVD)}{\abs{\Graph}}
		<
		\frac{\norm{u}{L^2(\Graph)}^2}{\norm{u'}{L^2(\Graph)}^2}
		,
	\]
	where $\abs{\Graph}$ again denotes the total length of $\Graph$ and $T(\Graph;\mVD) := \norm{u}{L^1(\Graph)} < \infty$ is the
	so-called \emph{torsional rigidity} of $\Graph$. Furthermore,
	\[
		\norm{u''}{L^2(\Graph)}^2
		=
		\norm{\mathbf{1}}{L^2(\Graph)}^2
		=
		\abs{\Graph}
	\]
	and
	\[
		\norm{u^{(m)}}{L^2(\Graph)}^2
		=
		\norm{(\mathbf{1})^{(m-2)}}{L^2(\Graph)}^2
		=
		0
		\quad\text{ for all }\
		m\ge 3
		,
	\]
	so that $u$ is just a polynomial of degree $2$ on each edge. This shows that the torsion function $u$ of a metric graph
	satisfies a Bernstein-type inequality with respect to $C_B \colon \NN_0 \to [0,\infty)$ with
	\begin{equation}\label{eq:CB-expla}
		C_B(0) = 1
		,\quad
		C_B(1) = \frac{\abs{\Graph}}{T(\Graph;\mVD)}
		,\quad
		C_B(2) = \frac{\abs{\Graph}}{\norm{u}{L^2(\Graph)}^2} < \frac{\abs{\Graph}^2}{T(\Graph;\mVD)^2}
		,
	\end{equation}
	where the last inequality follows from Cauchy--Schwarz, and $C_B(m)=0$ for all $m\geq 3$. Therefore, Theorem~\ref{thm:main}
	applies and leads for every $(\gamma,\rho)$-sampling set $\omega \subset \cE$ to the estimate
	\begin{equation}\label{eq:ls-torsion-general}
		\norm{u\chi_\omega}{L^2(\Graph)}^2
		>
		12\Bigl( \frac{\gamma}{48} \Bigr)^{\frac{4\log h}{\log 2}+5} \norm{u}{L^2(\Graph)}^2
	\end{equation}
	with
	\begin{equation}\label{eq:h-expla}
		h
		=
		\sqrt{C_B(0)} + \sqrt{C_B(1)}10\rho + \sqrt{C_B(2)}\frac{(10\rho)^2}{2}
		.
	\end{equation}
	Here, the quantity $h$ still depends on the function $u$ itself but can be bounded by something that only depends on geometric
	properties of the graph. As a first step, we observe that the right-hand side of \eqref{eq:ls-torsion-general} is non-increasing
	with respect to $h$. In view of \eqref{eq:CB-expla}, we may therefore replace $h$ in \eqref{eq:ls-torsion-general} by
	\[
		h'
		=
		1 + \frac{10\rho\sqrt{\abs{\Graph}}}{\sqrt{T(\Graph;\mVD)}} + \frac{50\rho^2\abs{\Graph}}{T(\Graph;\mVD)}
	\]
	and look for further upper bounds on the quotient $\abs{\Graph} / T(\Graph;\mVD)$. Now, the torsional rigidity $T(\Graph;\mVD)$
	is a quantity that only depends on $\Graph$ and $\mVD$ and can be computed explicitly, by
	\cite[Proposition~3.1 and Theorem~3.9]{MugPlu23}. If the graph has $\abs{\mE}$ many edges and Dirichlet conditions are imposed
	on at least one vertex (i.e.\ $\mVD \neq \emptyset$), the upper bound
	\[
		\frac{\abs{\Graph}}{T(\Graph;\mVD)}
		\leq
		\frac{12|\mE|^3}{\abs{\Graph}^2}
	\]
	is known to hold, see \cite[(4.6) and the improved estimate in Proposition~4.8]{MugPlu23}. In the particular case where the
	metric graph consists of only one interval of length $\abs{\Graph} = \ell$, we even have by \cite[Example~2.4]{MugPlu23} that
	\[
		T(\Graph;\mVD)
		=
		\frac{\ell^3}{3}
		\quad\text{ or }\quad
		T(\Graph;\mVD)
		=
		\frac{\ell^3}{12}
		,
	\]
	depending on whether $\mVD$ consists of one or two vertices, respectively. If we further choose
	$\omega = (\frac{\ell}{4}, \frac{3\ell}{4})$ and recall from Example~\ref{exa:basic}\,(3) that this set is
	$(\gamma=\frac{1}{2},\rho=\frac{\ell}{2})$-sampling, we find that
	\begin{equation}\label{eq:BernsteinSum-torsion}
		\norm{u\chi_\omega}{L^2(\Graph)}^2
		>
		12\Bigl( \frac{1}{96} \Bigr)^{\frac{4\log h'}{\log 2}+5} \norm{u}{L^2(\Graph)}^2,
	\end{equation}
	with
	\[
		h'
		=
		1 + 5\sqrt{3} + \frac{75}{2} < 48
		\quad\text{ resp.\ }\quad 
		h'
		=
		1 + 10\sqrt{3} + 150<169
		.
	\]

	Of course, the general estimate \eqref{eq:ls-torsion-general} can be further combined with (the graph counterpart of)
	\eqref{eq:filmay}, thus yielding a global estimate on any eigenfunction of the standard Laplacian with Dirichlet conditions
	imposed on at least one vertex in terms of the norm of the torsion function on a small control set.
\end{exa}

\appendix

\section{Factorisation of vector-valued magnetic Laplacians with possibly non-separated boundary conditions}
\label{sec:elliptic-metric}

In this appendix, we review the class of operators considered in the main part of the article, that is, the self-adjoint
realisations $\Delta_{A,Y}$ of the magnetic Laplacian on the edge set $\cE$ associated to real-valued $A \in L_\loc^1(\cE)$ and
closed subspaces $Y$ of $\ell^2(\mE) \oplus \ell^2(\mE_\internal)$. It is worth to note once more that a similar study for
Schrödinger operators with magnetic potentials on Euclidean domains can be found in \cite{HundertSimon-03}.
Assumption~\ref{assum-main} is imposed throughout the following.

We write 
\[
	\bdSp
	=
	\ell^2(\mE) \oplus \ell^2(\mE_\internal)
\]
for the \emph{boundary space} and equip it with the natural inner product $\langle \cdot , \cdot \rangle_\bdSp$ and corresponding
norm $\norm{\cdot}{\bdSp}$. We also denote
\[
	\preboundary(\cE)
	:=
	\bigl\{ f \colon \cE \to \CC \mid f_\me \in C(I_\me)\ \forall\me\in\mE,\ (f_\me(0))_{\me\in\mE} \oplus
	(f_\me(\ell_\me))_{\me\in\mE_\internal} \in \bdSp \bigr\}
	,
\]
and introduce the \emph{boundary evaluation maps} $\bdEv_\pm \colon \preboundary(\cE) \to \bdSp$ by
\[
	\bdEv_\pm(f)
	=
	(\pm f_\me(0))_{\me\in\mE} \oplus (f_\me(\ell_\me))_{\me\in\mE_\internal}
	.
\]

We rely in the following on Sobolev embeddings of first order on an interval. While we content ourselves with a qualitative
result in the $L^1$-case -- namely that $W^{1,1}((a,b))$ for $a,b \in \RR$ with $a < b$ is continuously embedded in $C([a,b])$ --
we use for the $L^2$-case the optimal result in this context obtained in \cite{Mar83} and obtain the following somewhat sharper
version of~\cite[Lemma~2.2.3]{Pos12}; a different but comparable result for metric graphs that are not of semi-bounded geometry
has been obtained in~\cite[Lemma~3.2 and Remark~3.3]{KosMugNic22}.

\begin{lemma}\label{lem:boundary}
	The space $W^{1,2}(\cE) = \bigoplus_{\me\in\mE} W^{1,2}((0,\ell_\me))$ is contained in $\preboundary(\cE)$, and the restrictions
	of the operators $\bdEv_\pm$ to $W^{1,2}(\cE)$ are bounded and have bounded right inverses. More precisely, we have
	\begin{equation}\label{eq:marti-appl}
		\norm{\bdEv_\pm(f)}{\bdSp}^2
		\leq
		2\coth(\llower)\norm{f}{W^{1,2}(\cE)}^2
		\quad\text{ for all }\
		f \in W^{1,2}(\cE)
		,
	\end{equation}
	and for every $\nu \in \bdSp$ there are $f^\pm \in W^{1,2}(\cE)$ with $\bdEv_\pm(f^\pm) = \nu$ and
	\[
	\norm{f^\pm}{W^{1,2}(\cE)}^2
	=
	\Bigl( \frac{\llower}{6} + \frac{2}{\llower} \Bigr) \norm{\nu}{\bdSp}^2
	.
	\]
\end{lemma}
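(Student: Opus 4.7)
\textbf{Plan for the proof of Lemma~\ref{lem:boundary}.} The plan is to establish both the trace estimate and the existence of the right inverses \emph{edgewise}, exploiting the direct-sum structure $W^{1,2}(\cE) = \bigoplus_\me W^{1,2}((0,\ell_\me))$ and the corresponding decomposition of $\bdSp$. The inclusion $W^{1,2}(\cE) \subset \preboundary(\cE)$ follows at once from the standard one-dimensional Sobolev embedding $W^{1,2}((0,\ell_\me)) \hookrightarrow C(I_\me)$ (replaced by the embedding into $C_0([0,\infty))$ in the external case), which gives each $f_\me$ a continuous representative; the trace $\bdEv_\pm(f)$ then lies in $\bdSp$ precisely by virtue of the quantitative bound \eqref{eq:marti-appl} to be proved next.

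For the trace bound \eqref{eq:marti-appl} I would invoke the sharp one-interval estimate from \cite{Mar83}: for every $\ell \in (0,\infty]$ and every $g \in W^{1,2}((0,\ell))$,
\[
    |g(0)|^2 \leq \coth(\ell)\, \|g\|_{W^{1,2}((0,\ell))}^2,
\]
with the convention $\coth(\infty) = 1$, and analogously at $x = \ell$ when $\ell < \infty$. This is verified by a one-line variational argument: the minimiser of $\|g\|_{W^{1,2}((0,\ell))}^2$ under $g(0) = 1$ solves $-g'' + g = 0$ with natural Neumann boundary at $x = \ell$, i.e.\ $g(x) = \cosh(\ell-x)/\cosh(\ell)$, and integration by parts yields $\|g\|^2 = -g'(0) = \tanh(\ell)$. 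Summing the two endpoint estimates on each internal edge and using the monotonicity of $\coth$ together with $\ell_\me \geq \llower$ gives $|f_\me(0)|^2 + |f_\me(\ell_\me)|^2 \leq 2\coth(\llower) \|f_\me\|_{W^{1,2}}^2$; for external edges only the contribution at $0$ appears, and it is dominated by the same right-hand side because $\coth(\llower) \geq 1$. Summation over $\me$ then delivers \eqref{eq:marti-appl}.

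For the right inverses I would produce explicit edgewise constructions from a single tent template
\[
    \phi(x) := \max\bigl\{0,\, 1 - 2x/\llower\bigr\},\qquad x \geq 0,
\]
which satisfies $\phi(0) = 1$, $\supp \phi \subset [0, \llower/2]$, and (by direct computation) $\|\phi\|_{W^{1,2}((0,\infty))}^2 = \llower/6 + 2/\llower$. Since $\ell_\me \geq \llower$, the two tents $\phi(\cdot)$ and $\phi(\ell_\me - \cdot)$ anchored at the endpoints of an internal edge have disjoint supports. Writing $\nu = (\nu_\me^+)_{\me \in \mE} \oplus (\nu_\me^-)_{\me \in \mE_\internal}$, I set
\[
    f_\me^+(x) := \nu_\me^+ \phi(x) + \nu_\me^- \phi(\ell_\me - x),\qquad f_\me^-(x) := -\nu_\me^+ \phi(x) + \nu_\me^- \phi(\ell_\me - x),
\]
suppressing the second summand on external edges. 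A direct check shows $\bdEv_\pm(f^\pm) = \nu$, and the disjointness of supports kills all cross terms in $\|f_\me^\pm\|_{W^{1,2}}^2$, giving exactly $(|\nu_\me^+|^2 + |\nu_\me^-|^2)(\llower/6 + 2/\llower)$ per edge. Summing then yields the claimed identity.

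The main obstacle will be purely bookkeeping: isolating the correct variational characterisation of the trace norm to accommodate $\ell_\me = \infty$ uniformly (so that a single formulation of the Martini estimate covers internal and external edges alike), correctly counting boundary contributions per edge type, and verifying the support-disjointness condition $\ell_\me \geq \llower$ for the template $\phi$. The analytic content is fully concentrated in the one-interval estimate of \cite{Mar83}; everything else reduces to a summation and an elementary integration of $\phi^2$ and $(\phi')^2$.
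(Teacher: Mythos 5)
Your proof is correct and follows essentially the same route as the paper: the trace estimate is reduced to the sharp one-interval bound of \cite{Mar83}, and the right inverse is built from the same compactly supported tent function of width $\llower/2$, with the same $L^2$ and derivative integrals $\llower/6$ and $2/\llower$. The only (cosmetic) difference is in how you pass from a single edge to the uniform constant: you apply the Martini estimate on the full interval $(0,\ell_\me)$ and then use the monotone decay of $\coth$ together with $\ell_\me \geq \llower$ (and the convention $\coth(\infty)=1$ for external edges), whereas the paper restricts $f_\me$ to a fixed subinterval of length $\llower$ at the relevant endpoint and applies the Martini bound there, then enlarges the $W^{1,2}$ norm to the whole edge. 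The paper's variant has the minor advantage of invoking the cited estimate only on a bounded interval of fixed length, so no separate remark about $\ell = \infty$ is needed; your variant requires the easy extension of the bound to $(0,\infty)$, which you correctly note and which is indeed trivial. Both are sound and yield identical constants.
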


\begin{proof}
	Let $f \in W^{1,2}(\cE)$. By the Sobolev embedding from \cite{Mar83}, for every $\me \in \mE$ we then have
	$f_\me \in C(I_\me)$ and
	\[
	\abs{f_\me(0)}^2
	\leq
	\coth(\llower) \norm{f_\me}{W^{1,2}((0,\llower))}^2
	\leq
	\coth(\llower) \norm{f_\me}{W^{1,2}((0,\ell_\me))}^2
	,
	\]
	and for $\me \in \mE_\internal$ also
	\[
	\abs{f_\me(\ell_\me)}^2
	\leq
	\coth(\llower) \norm{f_\me}{W^{1,2}((\ell_\me-\llower,\ell_\me))}^2
	\leq
	\coth(\llower) \norm{f_\me}{W^{1,2}((0,\ell_\me))}^2
	.
	\]
	Summing over $\me \in \mE$, respectively $\me \in \mE_\internal$, proves \eqref{eq:marti-appl}. In particular,
	$W^{1,2}(\cE)$ is indeed contained in $\preboundary(\cE)$, and the restrictions of $\bdEv_\pm$ to $W^{1,2}(\cE)$ are bounded.
	
	Conversely, let $\nu = \alpha \oplus \beta \in \bdSp$. We define $f^\pm \in L^2(\cE)$ as follows: For $\me \in \mE_\internal$
	let $f^\pm_\me \in L^2((0,\ell_\me))$ be given by
	\[
	f^\pm_\me(x)
	:=
	\begin{cases}
		\pm\alpha_\me(1 - 2x/\llower) ,& 0 \leq x \leq \llower/2,\\
		0 ,& \llower/2 < x < \ell_\me - \llower/2,\\
		\beta_\me(1 + 2(x-\ell_\me)/\llower) ,& \ell_\me - \llower/2 \leq x \leq \ell_\me.
	\end{cases}
	\]
	Then clearly $f^\pm_\me$ belongs to $W^{1,2}((0,\ell_\me))$ with $f^\pm_\me(0) = \pm\alpha_\me$ and
	$f^\pm_\me(\ell_\me) = \beta_\me$. Moreover,
	\[
	\norm{f^\pm_\me}{L^2((0,\ell_\me))}^2
	=
	\frac{\llower}{6}( \abs{\alpha_\me}^2 + \abs{\beta_\me^2} )
	\quad\text{ and }\quad
	\norm{(f^\pm_\me)'}{L^2((0,\ell_\me))}^2
	=
	\frac{2}{\llower} ( \abs{\alpha_\me}^2 + \abs{\beta_\me}^2 )
	.
	\]
	For $\me \in \mE_\external = \mE \setminus \mE_\internal$, let $f^\pm_\me \in L^2((0,\ell_\me))$ be given by
	\[
	f^\pm_\me(x)
	:=
	\begin{cases}
		\pm\alpha_\me(1 - 2x/\llower) ,& 0 \leq x \leq \llower/2,\\
		0 ,& \llower/2 < x.
	\end{cases}
	\]
	Then again $f^\pm_\me$ belongs to $W^{1,2}((0,\ell_\me))$ with $f^\pm_\me(0) = \pm\alpha_\me$ and satisfies
	\[
	\norm{f^\pm_\me}{L^2((0,\ell_\me))}^2
	=
	\frac{\llower}{6} \abs{\alpha_\me}^2
	\quad\text{ and }\quad
	\norm{(f^\pm_\me)'}{L^2((0,\ell_\me))}^2
	=
	\frac{2}{\llower} \abs{\alpha_\me}^2
	.
	\]
	Summing over $\me \in \mE = \mE_\internal \cup \mE_\external$ finally shows that indeed $f^\pm \in W^{1,2}(\cE)$
	with $\bdEv_\pm(f^\pm) = \nu$ and
	\[
	\norm{ f^\pm }{W^{1,2}(\cE)}^2
	=
	\norm{ f^\pm }{L^2(\cE)}^2 + \norm{ (f^\pm)' }{L^2(\cE)}^2
	=
	\Bigl( \frac{\llower}{6} + \frac{2}{\llower} \Bigr) \norm{\nu}{\bdSp}^2
	.
	\]
	This proves the remaining part of the statement and, thus, concludes the proof.
\end{proof}

Given a real-valued $A \in L_\loc^1(\cE)$, recall that
\[
W_A(\cE)
=
\{ f \in L^2(\cE) \cap W_\loc^{1,1}(\cE) \colon (\ii\partial + A)f \in L^2(\cE) \}
\]
with $(\ii\partial + A)f = \ii f' + Af$ and that the associated unitary gauge transformation $\gauge_A$ in $L^2(\cE)$ is given by
\[
\gauge_A f
=
\biggl( f_\me \cdot \exp\Bigl( -\ii\int_0^\cdot A_\me(s) \,\dd s \Bigr) \biggr)_{\me \in \mE}
,\quad
f \in L^2(\cE)
.
\]
We also introduce the unitary transformation $\gaugeBoundary_A \colon \cH \to \cH$ in the boundary space by
\[
V_A(\alpha \oplus \beta)
=
\alpha \oplus \biggl(\beta_\me \cdot \exp\Bigl(-\ii\int_0^{\ell_\me} A_\me(s) \,\dd s\Bigr)\biggr)_{\me \in \mE_\internal}
.
\]
Note that $\gauge_A$ and $\gaugeBoundary_A$ are indeed well-defined since $A_\me \in L_\loc^1(I_\me)$ for all $\me \in \mE$, that
$\gauge_A$ maps $L^2(\cE) \cap \preboundary(\cE)$ into $L^2(\cE) \cap \preboundary(\cE)$, and that
\begin{equation}\label{eq:boundaryUnitary}
	\gaugeBoundary_A \circ \Psi_\pm
	=
	\Psi_\pm \circ \gauge_A
	\quad\text{ on }\quad
	L^2(\cE) \cap \preboundary(\cE)
	.
\end{equation}

We equip $W_A(\cE)$ with the inner product
\[
\langle f , g \rangle_{W_A(\cE)}
:=
\langle f , g \rangle_{L^2(\cE)} + \langle (\ii\partial + A)f , (\ii\partial + A)g \rangle_{L^2(\cE)}	
\]
and obtain the following relation between $W_A(\cE)$ and $W^{1,2}(\cE)$, which also serves as an alternative characterisation of
the space $W_A(\cE)$.

\begin{lemma}\label{lem:magneticSobolev}
	The gauge transformation $\gauge_A$ is a unitary transformation from $W_A(\cE)$ to $W^{1,2}(\cE)$ and every $f \in W_A(\Graph)$
	satisfies
	\begin{equation}\label{eq:magneticDerivative}
		(\gauge_A f)'
		=
		-\ii\gauge_A((\ii\partial + A)f)
		.
	\end{equation}
	In particular, $W_A(\cE)$ is a Hilbert space with respect to the inner product $\langle \cdot , \cdot \rangle_{W_A(\cE)}$,
	$W_A(\cE)$ is contained in $\preboundary(\cE)$ as a set, and the restrictions of $\bdEv_\pm$ to $W_A(\cE)$ are surjective.
	
	Moreover, the embedding of $W_A(\cE)$ in $L^2(\cE)$ is Hilbert--Schmidt if, in addition, $\mE$ is finite and each edge has
	finite length, that is, $\mE = \mE_\internal$.
\end{lemma}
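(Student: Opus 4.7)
The plan is to observe that the gauge transformation $\gauge_A$ translates the magnetic derivative $(\ii\partial + A)$ into the ordinary derivative (up to the factor $-\ii$), which reduces all claims about $W_A(\cE)$ to the corresponding claims about $W^{1,2}(\cE)$ already available from Lemma~\ref{lem:boundary}.

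First I would verify the intertwining identity \eqref{eq:magneticDerivative} edgewise. For each $\me \in \mE$, the function $\phi_\me(x) := \exp\bigl(-\ii\int_0^x A_\me(s)\,\dd s\bigr)$ is unimodular and belongs to $W_\loc^{1,1}(I_\me)$ with $\phi_\me' = -\ii A_\me\phi_\me$, because $A_\me \in L_\loc^1(I_\me)$. The product rule for $W_\loc^{1,1}$ then gives
\[
	(\gauge_A f)_\me'
	=
	(f_\me\phi_\me)'
	=
	f_\me'\phi_\me - \ii A_\me f_\me\phi_\me
	=
	-\ii(\ii f_\me' + A_\me f_\me)\phi_\me
	=
	-\ii\bigl(\gauge_A((\ii\partial+A)f)\bigr)_\me.
\]
Since $\abs{\phi_\me} \equiv 1$, the map $\gauge_A$ is an $L^2(\cE)$-isometry, and the above identity shows that, for $f \in L^2(\cE)\cap W_\loc^{1,1}(\cE)$, the condition $(\ii\partial+A)f \in L^2(\cE)$ is equivalent to $(\gauge_A f)' \in L^2(\cE)$. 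Hence $\gauge_A$ restricts to a bijection $W_A(\cE) \to W^{1,2}(\cE)$ intertwining $\langle\cdot,\cdot\rangle_{W_A(\cE)}$ with the standard $W^{1,2}$-inner product, so it is unitary with respect to these Hilbert structures. This already yields that $W_A(\cE)$ is complete.

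Next I would derive the boundary assertions. Given $f \in W_A(\cE)$, each $\gauge_A f \in W^{1,2}(\cE) \subset \preboundary(\cE)$ is edgewise continuous by Lemma~\ref{lem:boundary}, and since $\phi_\me \in C(I_\me)$ also $f_\me = (\gauge_A f)_\me \overline{\phi_\me} \in C(I_\me)$. Combining \eqref{eq:boundaryUnitary} with the unitarity of $\gaugeBoundary_A$ yields $\bdEv_\pm(f) = \gaugeBoundary_A^{-1}\bdEv_\pm(\gauge_A f) \in \bdSp$, so $W_A(\cE) \subset \preboundary(\cE)$. For surjectivity of $\bdEv_\pm|_{W_A(\cE)}$, given $\nu \in \bdSp$ pick by Lemma~\ref{lem:boundary} a preimage $g^\pm \in W^{1,2}(\cE)$ of $\gaugeBoundary_A\nu$ under $\bdEv_\pm$; then $f^\pm := \gauge_A^{-1}g^\pm \in W_A(\cE)$ satisfies $\bdEv_\pm(f^\pm) = \gaugeBoundary_A^{-1}\bdEv_\pm(g^\pm) = \nu$.

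For the Hilbert--Schmidt claim, using that $\gauge_A$ is unitary on $L^2(\cE)$ and on $W_A(\cE) \to W^{1,2}(\cE)$, the embedding $W_A(\cE) \hookrightarrow L^2(\cE)$ factors through the embedding $W^{1,2}(\cE) \hookrightarrow L^2(\cE)$, and by the ideal property of the Hilbert--Schmidt class it suffices to treat the latter. When $\mE$ is finite with $\mE = \mE_\internal$, this embedding is a finite orthogonal sum of embeddings $W^{1,2}((0,\ell_\me)) \hookrightarrow L^2((0,\ell_\me))$, and each of these is Hilbert--Schmidt: for instance, the Neumann Laplacian on $(0,\ell_\me)$ has eigenvalues $(n\pi/\ell_\me)^2$, $n \in \NN_0$, so the singular values of the embedding equal $\bigl((n\pi/\ell_\me)^2+1\bigr)^{-1/2}$, which are square-summable.

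The main obstacle, really a bookkeeping one, is to justify the product rule for $f_\me\phi_\me$ with merely $L_\loc^1$ potential and to consistently track the action of $\gaugeBoundary_A$ on the boundary side via \eqref{eq:boundaryUnitary}; once this is done, the Hilbert-space, boundary, and Hilbert--Schmidt statements reduce to their well-known $A = 0$ counterparts.
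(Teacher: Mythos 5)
Your proposal is correct and follows essentially the same route as the paper's proof: verify the intertwining identity~\eqref{eq:magneticDerivative} edgewise (the paper invokes the chain rule via \cite[Corollary~8.11]{Bre10}, you the equivalent product rule after differentiating $\phi_\me$), deduce that $\gauge_A$ is unitary from $W_A(\cE)$ onto $W^{1,2}(\cE)$, obtain the boundary statements from Lemma~\ref{lem:boundary} together with~\eqref{eq:boundaryUnitary}, and factor the embedding $W_A(\cE)\hookrightarrow L^2(\cE)$ through $W^{1,2}(\cE)\hookrightarrow L^2(\cE)$. The only minor difference is that the paper cites \cite[Satz~1]{Gra68} for the Hilbert--Schmidt property of the single-interval embedding, whereas you give a short self-contained argument via the Neumann eigenvalues; both are valid.
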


\begin{proof}
	We show that $\gauge_A W_A(\cE) = W^{1,2}(\cE)$ as sets and that \eqref{eq:magneticDerivative} holds.
	This then implies that
	\[
	\norm{ \gauge_A f }{W^{1,2}(\cE)}^2
	=
	\norm{ \gauge_A f }{L^2(\cE)}^2 + \norm{ (\gauge_A f)' }{L^2(\cE)}^2
	=
	\norm{ \gauge_A f }{L^2(\cE)}^2 + \norm{ \gauge_A((\ii\partial + A)f) }{L^2(\cE)}^2
	=
	\norm{f}{W_A(\cE)}^2
	\]
	for all $f \in W_A(\cE)$ since $\gauge_A$ is unitary in $L^2(\cE)$, so that $\gauge_A$ is a unitary also from $W_A(\cE)$ to
	$W^{1,2}(\cE)$. In particular, $W_A(\cE)$ is Hilbert space with respect to $\langle \cdot , \cdot \rangle_{W_A(\cE)}$ since
	$W^{1,2}(\cE)$ is a Hilbert space. Moreover, in light of \eqref{eq:boundaryUnitary}, it follows from Lemma~\ref{lem:boundary}
	that $W_A(\cE)$ belongs to $\preboundary(\cE)$ and that the restrictions of $\bdEv_\pm$ to $W_A(\cE)$ are surjective.
	
	Let $f \in W_A(\Graph)$. Using edgewise a standard Sobolev embedding and the chain rule for weakly differentiable functions
	(see, e.g., \cite[Corollary~8.11]{Bre10}), we see that $g := \gauge_A f$ belongs to $W_\loc^{1,1}(\cE)$ and satisfies
	\[
	g_\me'
	=
	(f_\me' - \ii A_\me f_\me)\exp\Bigl( -\ii\int_0^\cdot A_\me(s) \,\dd s \Bigr)
	,\quad
	\me \in \mE
	,
	\]
	so that
	\[
	(\gauge_A f)'
	=
	-\ii\gauge_A((\ii\partial + A)f)
	\in
	L^2(\cE)
	.
	\]
	Hence, $\gauge_A f \in W^{1,2}(\cE)$ and \eqref{eq:magneticDerivative} holds.
	
	Conversely, let $f = \gauge_A^*g$ with some $g \in W^{1,2}(\cE)$. By Sobolev embedding and again the chain rule, we conclude
	that $f$ belongs to $L^2(\cE) \cap W_\loc^{1,1}(\cE)$ and satisfies
	\[
	f_\me'
	=
	(g_\me' + \ii A_\me g_\me)\exp\Bigl( \ii\int_0^\cdot A_\me(s) \,\dd s \Bigr)
	,\quad
	\me \in \mE
	,
	\]
	so that
	\[
	(\ii\partial + A)f
	=
	\ii \gauge_A^* g'
	\in
	L^2(\cE)
	.
	\]
	Hence, $f \in W_A(\cE)$, which completes the proof of $\gauge_A W_A(\cE) = W^{1,2}(\cE)$ and of \eqref{eq:magneticDerivative}.
	
	Finally, the embedding of $W_A(\cE)$ in $L^2(\cE)$ is a composition of $\gauge_A$ and the embedding of $W^{1,2}(\cE)$ in
	$L^2(\cE)$. The latter is Hilbert--Schmidt if $\mE = \mE_\internal$ is finite since then each embedding of
	$W^{1,2}((0,\ell_\me))$ in $L^2((0,\ell_\me))$ is Hilbert--Schmidt by~\cite[Satz~1]{Gra68}.
\end{proof}

Let now $Y$ be a subspace of $\bdSp = \ell^2(\mE) \oplus \ell^2(\mE_\internal)$. We consider the operator $D_{A,Y}$ in
$L^2(\cE)$ defined by
\[
\Dom(D_{A,Y})
:=
\bigl\{ f \in W_A(\cE) \colon \bdEv_+(f) \in Y \bigr\}
,\quad
D_{A,Y} f
:=
(\ii\partial + A)f
\quad\text{ for }\
f \in \Dom(D_{A,Y})
.
\]
The operator $D_{A,Y}$ mimics the first (magnetic) derivative operator on intervals. It is therefore not surprising that its
adjoint corresponds to the same differential expression with appropriate boundary conditions,
cf.\ also~\cite[Theorem~3.6]{SchSeiVoi15} and \cite[Lemma~2.2.8]{Pos12}.

\begin{prop}\label{prop:derivative}
	The operator $D_{A,Y}$ is densely defined, and its adjoint $D_{A,Y}^*$ is given by
	\[
	\Dom(D_{A,Y}^*)
	=
	\bigl\{ f \in W_A(\cE) \colon \bdEv_-(f) \in Y^\perp \bigr\}
	,\quad
	D_{A,Y}^* f
	=
	(\ii\partial + A)f
	\quad\text{ for }\
	f \in \Dom(D_{A,Y}^*)
	.
	\]
	Moreover, $D_{A,Y}$ is closable with $\overline{D_{A,Y}} = D_{A,\overline{Y}}$, where $\overline{Y}$ denotes the closure of $Y$
	in $\bdSp$. In particular, $D_{A,Y}$ is closed if and only if $Y$ is closed.
\end{prop}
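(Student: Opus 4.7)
The plan is to center the whole argument around the integration by parts identity
\begin{equation*}
	\langle (\ii\partial + A)f, g \rangle_{L^2(\cE)} - \langle f, (\ii\partial + A)g \rangle_{L^2(\cE)}
	=
	\ii\langle \bdEv_-(f), \bdEv_+(g) \rangle_\bdSp,
	\quad f,g \in W_A(\cE).
\end{equation*}
To establish it, I would apply the gauge transformation $\gauge_A$ from Lemma~\ref{lem:magneticSobolev} to reduce to the product rule $(\gauge_A f \cdot \overline{\gauge_A g})' = (\gauge_A f)' \overline{\gauge_A g} + \gauge_A f \overline{(\gauge_A g)'}$ edgewise. Finite-endpoint contributions pick up pointwise values of $f_\me \overline{g_\me}$, since the $\gauge_A$-phases at $0$ and $\ell_\me$ cancel against each other; infinite-endpoint contributions vanish because $W^{1,2}([0,\infty))$-functions decay to $0$ at infinity. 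Absolute summability of the boundary series over $\mE$ is then supplied by Cauchy--Schwarz on $\bdSp$.

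Once the identity is in place, the remaining claims follow in a standard adjoint-type argument. Density of $\Dom(D_{A,Y})$ is immediate: the algebraic direct sum of the spaces $C_c^\infty((0,\ell_\me))$, extended by zero, lies in $\Dom(D_{A,Y})$ regardless of $Y$ and is dense in $L^2(\cE)$. For the characterisation of $D_{A,Y}^*$ I would exploit that the flip operator $\boundarySign \colon \alpha \oplus \beta \mapsto (-\alpha) \oplus \beta$ is self-adjoint on $\bdSp$ with $\bdEv_- = \boundarySign \circ \bdEv_+$, so that $\langle \bdEv_-(f), \bdEv_+(g)\rangle_\bdSp = \langle \bdEv_+(f), \bdEv_-(g)\rangle_\bdSp$. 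The inclusion $\supseteq$ is then read off directly from the identity. For $\subseteq$ I would first test $g \in \Dom(D_{A,Y}^*)$ against $f \in C_c^\infty((0,\ell_\me))$ to extract that $(\ii\partial+A)g$ exists as an element of $L^2(\cE)$ and coincides with $D_{A,Y}^* g$, so that $g \in W_A(\cE)$; the boundary identity then forces $\langle \bdEv_-(f), \bdEv_+(g)\rangle_\bdSp = 0$ for all $f \in \Dom(D_{A,Y})$, and the surjectivity of $\bdEv_+|_{\Dom(D_{A,Y})}$ onto $Y$ (from Lemma~\ref{lem:magneticSobolev}) yields $\bdEv_-(g) \in Y^\perp$.

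Closability is then automatic, since the same $C_c^\infty$ functions also lie in $\Dom(D_{A,Y}^*)$ for every $Y$, so that $D_{A,Y}^*$ is densely defined. To compute the closure I would introduce the dual family $\tilde D_{A,Z}$ with $\Dom(\tilde D_{A,Z}) = \{ g \in W_A(\cE) \colon \bdEv_-(g) \in Z \}$ and observe, by the symmetric version of the same calculation, that $D_{A,Y}^* = \tilde D_{A,Y^\perp}$ and $\tilde D_{A,Z}^* = D_{A,Z^\perp}$. Therefore
\begin{equation*}
	\overline{D_{A,Y}}
	=
	(D_{A,Y}^*)^*
	=
	D_{A,(Y^\perp)^\perp}
	=
	D_{A,\overline{Y}},
\end{equation*}
using $(Y^\perp)^\perp = \overline{Y}$ in a Hilbert space. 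The last assertion then follows, since the map $Y \mapsto D_{A,Y}$ is injective by surjectivity of $\bdEv_+|_{W_A(\cE)}$ onto $\bdSp$: if $D_{A,Y}$ is closed, then $D_{A,Y} = D_{A,\overline{Y}}$, so their domains and hence $Y$ and $\overline{Y}$ coincide, and the converse is trivial. The main technical obstacle I foresee is the clean handling of the boundary identity across both internal and external edges together with the $\gauge_A$-phase cancellation; once that is nailed down, the rest adapts the familiar treatment of the momentum operator on an interval.
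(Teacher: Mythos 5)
Your proposal follows the paper's line of argument closely — the same integration-by-parts identity obtained through the gauge transformation of Lemma~\ref{lem:magneticSobolev}, the same two-step adjoint characterisation, and the same double-adjoint computation for the closure — so I focus on the one point where you deviate and where a genuine gap opens up.

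You claim that the edgewise $C_c^\infty$ functions (extended by zero) lie in $\Dom(D_{A,Y})$ and use them both for density and for the testing step in the inclusion $\Dom(D_{A,Y}^*) \subset \{f \in W_A(\cE) : \bdEv_-(f) \in Y^\perp\}$. Under the paper's standing hypothesis $A \in L^1_\loc(\cE)$, this is false in general: for $\varphi \in C_c^\infty((0,\ell_\me))$ the condition $\ii\varphi' + A\varphi \in L^2$ is equivalent to $A\varphi \in L^2$, and a magnetic potential $A$ that is $L^1_\loc$ but not $L^2_\loc$ (e.g.\ an $|x-x_0|^{-1/2}$ singularity at an interior point $x_0$ hit by the support of $\varphi$) destroys this. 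The paper avoids the problem by instead working with $\gauge_A^* C_c^\infty(\cE)$: by Lemma~\ref{lem:magneticSobolev} these functions belong to $W_A(\cE)$ with $(\ii\partial + A)(\gauge_A^*\varphi) = \ii\gauge_A^*\varphi' \in L^2(\cE)$, regardless of any local integrability of $A$ beyond $L^1$, and they vanish at the endpoints so they sit in $\Dom(D_{A,Y})$ for every $Y$. Replacing $C_c^\infty$ by $\gauge_A^* C_c^\infty$ in both of your steps repairs the argument, and the duality computation $\varphi \mapsto \langle \gauge_A^*\varphi, D_{A,Y}^* g\rangle = \langle D_{A,Y}\gauge_A^*\varphi, g\rangle = -\langle \varphi', \ii\gauge_A g\rangle$ then delivers $\gauge_A g \in W^{1,2}(\cE)$, i.e.\ $g \in W_A(\cE)$, exactly as you intend. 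Everything else — the self-adjointness of $\boundarySign$ giving $\langle\bdEv_-(f),\bdEv_+(g)\rangle = \langle\bdEv_+(f),\bdEv_-(g)\rangle$, the surjectivity of $\bdEv_+$ onto $\bdSp$ to conclude $\bdEv_-(g)\in Y^\perp$, the identity $\overline{D_{A,Y}} = (D_{A,Y}^*)^* = D_{A,\overline Y}$ via $(Y^\perp)^\perp=\overline Y$, and the injectivity of $Y\mapsto D_{A,Y}$ for the final equivalence — is correct and matches the paper, with your last injectivity remark being a useful piece of detail that the paper leaves implicit.

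One cosmetic note: your phrase that ``the $\gauge_A$-phases at $0$ and $\ell_\me$ cancel against each other'' is not quite what happens. At $x=0$ the phase is trivially $1$; the cancellation at each endpoint is between the phase factor of $\gauge_A f$ and the conjugate phase factor of $\overline{\gauge_A g}$, which multiply to $1$ pointwise. The conclusion is the same, but this is worth phrasing correctly, especially since the identity $\gaugeBoundary_A\circ\Psi_\pm = \Psi_\pm\circ\gauge_A$ in the appendix is precisely what encodes this bookkeeping.
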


\begin{proof}
	We introduce the auxiliary space
	\[
	C_c^\infty(\cE)
	:=
	W^{1,2}(\cE) \cap \{ f \colon \cE \to \CC \mid f_\me \in C_c^\infty((0,\ell_\me))\ \forall \me \in \mE \}
	.
	\]	
	Since $\gauge_A^*C_c^\infty(\cE)$ is contained in $\Dom(D_{A,Y})$ by Lemma~\ref{lem:magneticSobolev} and $C_c^\infty(\cE)$ is
	dense in $L^2(\cE)$, $D_{A,Y}$ is densely defined.
	
	In order to show the representation for $D_{A,Y}^*$, we observe that integration by parts leads for $f,g \in W^{1,2}(\cE)$ to
	\[
	\langle f_\me' , g_\me \rangle_{L^2((0,\ell_\me))}
	=
	f_\me(\ell_\me)\overline{g_\me(\ell_\me)} - f_\me(0)\overline{g_\me(0)} - \langle f_\me , g_\me' \rangle_{L^2((0,\ell_\me))}
	,\quad
	\me \in \mE
	,
	\]
	with the natural understanding of $f_\me(\ell_\me) = 0 = g_\me(\ell_\me)$ if
	$\me \in \mE_\external = \mE \setminus \mE_\internal$. Given $f,g \in W_A(\cE)$, we apply the latter in light of
	Lemma~\ref{lem:magneticSobolev} to $\gauge_Af, \gauge_Ag \in W^{1,2}(\cE)$ and obtain
	\[
	\langle (\gauge_A f)' , \gauge_A g \rangle_{L^2(\cE)}
	=
	\langle \bdEv_+(\gauge_A f) , \bdEv_-(\gauge_A g) \rangle_\bdSp - \langle \gauge_A f , (\gauge_A g)' \rangle_{L^2(\cE)}
	.
	\]
	Taking into account \eqref{eq:boundaryUnitary} and \eqref{eq:magneticDerivative} and the unitarity of $\gauge_A$ and
	$\gaugeBoundary_A$, this gives
	\begin{equation}\label{eq:intByParts}
		\langle (\ii\partial + A)f , g \rangle_{L^2(\cE)}
		=
		\langle \bdEv_+(\ii f) , \bdEv_-(g) \rangle_\bdSp + \langle f , (\ii\partial + A)g \rangle_{L^2(\cE)}
		,\quad
		f,g \in W_A(\cE)
		.
	\end{equation}
	
	Set
	\[
	\cD
	:=
	\bigl\{ g \in W_A(\cE) \mid \bdEv_-(g) \in Y^\perp \bigr\}
	.
	\]
	It is then easy to see from \eqref{eq:intByParts} that $\cD \subset \Dom(D_{A,Y}^*)$ with $D_{A,Y}^*g = (\ii\partial + A)g$ for
	all $g \in \cD$. It remains to show that $\Dom(D_{A,Y}^*) \subset \cD$. To this end, let $g \in \Dom(D_{A,Y}^*)$. For every
	$\varphi \in C_c^\infty(\cE)$ we then have $\gauge_A^*\varphi \in W_A(\cE)$ with
	$D_{A,Y}\gauge_A^*\varphi = \ii\gauge_A^*\varphi'$ and, therefore,
	\[
	\langle \varphi , \gauge_A D_{A,Y}^*g \rangle_{L^2(\cE)}
	=
	\langle D_{A,Y}\gauge_A^*\varphi , g \rangle_{L^2(\cE)}
	=
	-\langle \varphi' , \ii\gauge_A g \rangle_{L^2(\cE)}
	.
	\]
	Thus, $\gauge_A g \in W^{1,2}(\cE)$ with $\ii(\gauge_A g)' = \gauge_A D_{A,Y}^*g$, that is, $g \in W_A(\cE)$ and
	$D_{A,Y}^*g = (\ii\partial + A)g$. Formula \eqref{eq:intByParts} then implies for all
	$f \in \Dom(D_{A,Y})$ that
	\begin{align*}
		\langle f , \ii\partial + A)g \rangle_{L^2(\cE)}
		&=
		\langle f , D_{A,Y}^*g \rangle_{L^2(\cE)}
		=
		\langle D_{A,Y}f , g \rangle_{L^2(\cE)}
		=
		\langle \ii\partial + A)f , g \rangle_{L^2(\cE)}\\
		&=
		\langle \bdEv_+(\ii f) , \bdEv_-(g) \rangle_\bdSp + \langle f , \ii\partial + A)g \rangle_{L^2(\cE)}
		,
	\end{align*}
	that is,
	\[
	\langle \bdEv_+(f) , \bdEv_-(g) \rangle_\bdSp
	=
	0
	,\quad
	f \in \Dom(D_{A,Y})
	.
	\]
	Since by Lemma~\ref{lem:magneticSobolev} the restriction of $\bdEv_+$ to $W_A(\cE)$ is surjective, we conclude that $\bdEv_-(g)$
	indeed belongs to $Y^\perp$, which proves the representation for $D_{A,Y}^*$. In particular, $D_{A,Y}^*$ is again densely
	defined, that is, $D_{A,Y}$ is closable.
	
	Finally, switching the roles of $\bdEv_+$ and $\bdEv_-$, the same reasoning as above shows that the closure of $D_{A,Y}$ is
	given by $\overline{D_{A,Y}} = (D_{A,Y}^*)^* = D_{A,\overline{Y}}$, where for the last equality we used that
	$(Y^\perp)^\perp = \overline{Y}$.
\end{proof}%

\begin{rem}\label{rem:boundary}
	Defining the unitary transformation $\boundarySign \colon \bdSp \to \bdSp$ by 
	\[
	\boundarySign(\alpha \oplus \beta) = (-\alpha) \oplus \beta,
	\]
	it is easy to see that $\bdEv_- = \boundarySign \circ \bdEv_+$, so that, in fact, $D_{A,Y}^* = D_{A,\boundarySign Y^\perp}$. In
	particular, $D_{A,Y}$ is self-adjoint if and only if $Y^\perp = \boundarySign Y$.
\end{rem}

Proposition~\ref{prop:derivative} allows us to introduce the magnetic Laplacian $\Delta_{A,Y}$ in $L^2(\cE)$ with
boundary conditions corresponding to a \emph{closed} subspace $Y\subset \cH$.

\begin{cor}\label{cor:Laplace}
	Suppose that $A \in L_\loc^1(\Graph)$ is real-valued and that $Y \subset \bdSp$ is a closed subspace. Then the following
	assertions hold.
	\begin{enumerate}[(a)]
		
		\item
		The quadratic form $\fa_{A,Y}$ in $L^2(\cE)$ given by
		\[
		\fa_{A,Y}[ f , g ]
		=
		\langle D_{A,Y}f , D_{A,Y}g \rangle_{L^2(\cE)}
		=
		\langle (\ii\partial + A)f , (\ii\partial + A)g \rangle_{L^2(\cE)}
		,\quad
		\Dom[\fa_{A,Y}]
		=
		\Dom(D_{A,Y})
		,
		\]
		is closed and densely defined, and the associated self-adjoint operator $-\Delta_{A,Y}$ agrees with $D_{A,Y}^*D_{A,Y}$. In
		particular, $\Delta_{A,Y}$ is given by
		\begin{align*}
			\Dom(\Delta_{A,Y})
			&=
			\bigl\{ f \in W_A(\cE) \mid
			(\ii\partial + A)f \in W_A(\cE)
			,\
			\bdEv_+(f) \in Y,\ \bdEv_-((\ii\partial+A)f) \in Y^\perp \bigr\},\\
			\Delta_{A,Y} f
			&=
			-(\ii\partial + A)^2 f
			=
			-\ii(\ii f' + Af)' - A(\ii f' + Af)
			,\quad
			f \in \Dom(\Delta_{A,Y})
			.
		\end{align*}
		
		\item
		The self-adjoint operator $D_{A,Y}D_{A,Y}^*$ agrees with the Laplacian $-\Delta_{A,\boundarySign Y^\perp}$ with
		$\boundarySign$ as in Remark~\ref{rem:boundary}, and for every $\lambda \geq 0$ the operator $D_{A,Y}$ maps the spectral
		subspace $\Ran \PP_{-\Delta_{A,Y}}(\lambda)$ into $\Ran \PP_{-\Delta_{A,\boundarySign Y^\perp}}(\lambda)$.
		
	\end{enumerate}
\end{cor}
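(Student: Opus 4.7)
The proof breaks into two cleanly separable pieces, one for each part, both resting on Proposition~\ref{prop:derivative} and the general theory of the positive self-adjoint operator $T^*T$ associated to a densely defined closed operator $T$ (the abstract ``second representation theorem'' / polar-decomposition machinery).

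For part (a), the plan is to invoke the standard fact that if $T$ is densely defined and closed on a Hilbert space, then the sesquilinear form $(f,g) \mapsto \langle Tf, Tg\rangle$ with domain $\Dom(T)$ is closed and densely defined, and the unique non-negative self-adjoint operator representing it is $T^*T$, with $\Dom(T^*T) = \{f \in \Dom(T) \colon Tf \in \Dom(T^*)\}$. Since $Y$ is closed, Proposition~\ref{prop:derivative} guarantees that $T := D_{A,Y}$ is densely defined and closed. Hence $\fa_{A,Y}$ is closed and densely defined, and the operator it represents is $D_{A,Y}^*D_{A,Y}$. What remains is a bookkeeping step: combining the descriptions of $\Dom(D_{A,Y})$ and $\Dom(D_{A,Y}^*)$ from Proposition~\ref{prop:derivative} yields precisely the advertised domain
\[
	\{ f \in W_A(\cE) \mid (\ii\partial+A)f \in W_A(\cE),\ \bdEv_+(f) \in Y,\ \bdEv_-((\ii\partial+A)f)\in Y^\perp\},
\]
on which $D_{A,Y}^*D_{A,Y}f = (\ii\partial+A)^2 f$; reading off the sign convention $\Delta_{A,Y}f = -(\ii\partial+A)^2 f$ then identifies this operator with $-\Delta_{A,Y}$.

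For the first assertion of part (b), the key is Remark~\ref{rem:boundary}: $D_{A,Y}^* = D_{A,\boundarySign Y^\perp}$, and since $\boundarySign Y^\perp$ is again closed, Proposition~\ref{prop:derivative} gives $(D_{A,Y}^*)^* = D_{A,Y}$. Therefore $D_{A,Y}D_{A,Y}^* = (D_{A,\boundarySign Y^\perp})^*D_{A,\boundarySign Y^\perp}$, and part (a) applied to the closed subspace $\boundarySign Y^\perp$ identifies this with $-\Delta_{A,\boundarySign Y^\perp}$ (here one uses that $(\boundarySign Y^\perp)^\perp = \boundarySign Y$, which is the dual boundary condition appearing in the domain description).

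For the spectral-subspace statement, the plan is to exploit the standard intertwining associated with the polar decomposition: if $T = D_{A,Y}$ with $T^*T = -\Delta_{A,Y}$ and $TT^* = -\Delta_{A,\boundarySign Y^\perp}$, then for any bounded Borel function $\phi$ on $[0,\infty)$ one has $T\,\phi(T^*T) = \phi(TT^*)\,T$ on $\Dom(T)$. Applied to $\phi = \chi_{[0,\lambda]}$, this gives
\[
	D_{A,Y}\, \PP_{-\Delta_{A,Y}}(\lambda)
	= \PP_{-\Delta_{A,\boundarySign Y^\perp}}(\lambda)\, D_{A,Y},
\]
which immediately yields the claimed mapping property after checking that $\Ran\PP_{-\Delta_{A,Y}}(\lambda) \subset \Dom(-\Delta_{A,Y})^{1/2} = \Dom(D_{A,Y})$ (from functional calculus) so that the composition makes sense.

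The only mildly nontrivial step is the identification of the form domain inside part (a): one must verify that the boundary trace condition $\bdEv_+(f) \in Y$ appearing in $\Dom(D_{A,Y})$ really encodes the correct half of the ``Laplacian'' boundary conditions, and that the second trace condition $\bdEv_-((\ii\partial+A)f) \in Y^\perp$ is generated automatically by requiring $(\ii\partial+A)f \in \Dom(D_{A,Y}^*)$. Everything else is abstract operator theory plus a direct application of Proposition~\ref{prop:derivative} and Remark~\ref{rem:boundary}; no further analytic input is needed.
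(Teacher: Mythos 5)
Your proposal is correct and follows essentially the same route as the paper: part (a) is the standard $T^*T$ construction from Proposition~\ref{prop:derivative}, the identification $D_{A,Y}D_{A,Y}^*=-\Delta_{A,\boundarySign Y^\perp}$ uses Remark~\ref{rem:boundary} exactly as in the paper, and the spectral mapping property rests on the polar-decomposition intertwining (the paper phrases it as $W\PP_{\abs{D_{A,Y}}}(\lambda)=\PP_{\abs{D_{A,Y}^*}}(\lambda)W$, citing a lemma, while you state the equivalent identity $T\phi(T^*T)=\phi(TT^*)T$ on $\Dom(T)$).
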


\begin{proof}
	Part~(a) is clear by Proposition~\ref{prop:derivative}, so it suffices to prove~(b). Here, in light of
	Remark~\ref{rem:boundary}, it is easy to see that
	\[
	-\Delta_{A,\boundarySign Y^\perp}
	=
	D_{A,\boundarySign Y^\perp}^* D_{A,\boundarySign Y^\perp}
	=
	D_{A,\boundarySign(\boundarySign Y^\perp)^\perp} D_{A,\boundarySign Y^\perp}
	=
	D_{A,Y} D_{A,Y}^*
	.
	\]
	Now, consider the polar decomposition $D_{A,Y} = W\abs{D_{A,Y}}$ with a partial isometry $W$ with initial set
	$\overline{ \Ran\abs{D_{A,Y}} }$ and final set $\overline{ \Ran D_{A,Y} }$, see, e.g., \cite[Section~VI.2.7]{Kato-95}. We then
	have the identity $D_{A,Y} = W\abs{ D_{A,Y} } = \abs{ D_{A,Y}^* }W$ on $\Dom(D_{A,Y}) = \Dom(\abs{D_{A,Y}})$, and although $W$
	is only a partial isometry the latter is sufficient to conclude that
	$W \PP_{\abs{D_{A,Y}}}(\lambda) = \PP_{\abs{D_{A,Y}^*}}(\lambda) W$ for all $\lambda \geq 0$, see, e.g.,
	\cite[Lemma~3.3]{EgidiS-22}. If $f$ belongs to $\Ran \PP_{-\Delta_{A,Y}}(\lambda) = \Ran \PP_{\abs{D_{A,Y}}}(\lambda^{1/2})$, we
	therefore have $Wf \in \Ran \PP_{\abs{D_{A,Y}^*}}(\lambda^{1/2})$, so that
	$D_{A,Y} f = \abs{D_{A,Y}^*}Wf \in \Ran \PP_{\abs{D_{A,Y}^*}}(\lambda^{1/2}) = \Ran \PP_{-\Delta_{A,\boundarySign Y^\perp}}(\lambda)$.
	This completes the proof.
\end{proof}%


\newcommand{\etalchar}[1]{$^{#1}$}

\end{document}